\DeclareFontFamily{U}{stix2bb}{\skewchar\font127 }
\DeclareFontShape{U}{stix2bb}{m}{n} {<-> stix2-mathbb}{}
\DeclareMathAlphabet{\mathbb}{U}{stix2bb}{m}{n}
\numberwithin{equation}{section}
\def\R{\mathbb{R}}
\def\T{\mathbb{T}}
\def\cP{\mathbb{P}}
\def\div{\mathrm{div}}
\def\cD{\mathcal{D}}
\def\cS{\mathcal{S}}
\def\cH{\mathcal{H}}
\def\cP{\mathcal{P}}
\def\hf{\mathbb F}
\def\G{\R^{4d}}
\def\intDo{\R^{2d}}
\def\supp{\operatorname{supp}}
\def\loc{\operatorname{loc}}
\newcommand{\defeq }{\mathop{=}\limits^{\textrm{def}}}
\def\d{\partial}
\def\Do{\R^{2d}}
\def\Doa{\Omega\times\R^d}
\def\Dot{\T^d\times\R^d}
\newcommand{\dd}{{\,\rm d}}
\theoremstyle{plain}
        \newtheorem{theorem}{Theorem}[section]
        \newtheorem{lemma}[theorem]{Lemma}
        \newtheorem{definition}[theorem]{Definition} 
        \newtheorem{remark}[theorem]{Remark}  
\title{On a fuzzy Landau Equation:\ Part II.\ Solvability results}
\author{Manh Hong Duong
  \and  Zihui He
}
\newcommand{\Addresses}{{
  \bigskip
  \footnotesize

  H.~Duong, \textsc{School of Mathematics, University of Birmingham, UK}\par\nopagebreak
  \textit{E-mail}: \texttt{h.duong@bham.ac.uk}

  \medskip

  Z.~He, \textsc{Fakult\"at f\"ur Mathematik, Universit\"at Bielefeld, Postfach 100131, 33501 Bielefeld, Germany}\par\nopagebreak
  \textit{E-mail}: \texttt{zihui.he@uni-bielefeld.de}

}}
\begin{document}

\maketitle

\begin{abstract}
This article is the second in a series of our works on the fuzzy Landau equation, where particles interact via delocalised Coulomb collisions. In this work, we focus on the existence and propagation of regularity for solutions to the fuzzy Landau equation.

\end{abstract}

\tableofcontents

\section{Introduction}\label{sec:intro}
In this paper, we consider the spatially inhomogeneous (kinetic) \textit{fuzzy Landau equation:}
\begin{equation}
    \label{Landau-fuz}
    \d_t f+v\cdot\nabla_x f=Q_{\sf fuz}(f,f)\quad\text{on}\quad[0,T]\times \Omega\times\R^d,~\text{and}~f(0,x,v)=f_0(x,v).
\end{equation}
The spatial domain $\Omega$ can be $\T^d$ or $\R^d$ for dimensions $d\ge 2$. This equation characterises the time evolution of the unknown $f_t(x,v):[0,T]\times\Doa\to\R_+$, describing the distribution of particles in a plasma at time $t$ at position $x$ and with velocity $v$. The initial datum $f_0$ is a given distribution on the phase space. The fuzzy Landau collision term is given by
\begin{equation*}
\begin{aligned}
 Q_{\sf fuz}(f,f)=&\nabla_v \cdot\Big(\int_{\Doa}\kappa(x-x_*)a(v-v_*)\big(f_*\nabla_v f-f\nabla_{v_*} f_*\big)\dd x_*\dd v_*\Big),
\end{aligned}
\end{equation*}
where  $f_*=f(x_*,v_*)$ denotes the density of particle at different position and velocity $(x_*,v_*)$.
The kernel $a:\R^d\to\R^{d\times d}$ is given by
\begin{equation}
    \label{kernel:a}
    a(z)=A(|z|)\Pi_{z^\perp},
\end{equation}
where $A:\R^d\to\R_+$ is a weight, and $\Pi_{z^\perp}$ denotes the projection operator onto $z^\perp$,
\begin{align*}
  \Pi_{z^\perp}=\operatorname{Id}-\frac{z\otimes z}{|z|^2}.
\end{align*}
When $A(|z|)=|z|^{2+\gamma}$, $a$ is known as the Coulomb interaction kernel,
\begin{equation}
    \label{kernel:A}
a(z)=|z|^{2+\gamma}\Pi_{z^\perp}=|z|^\gamma(|z|^2\operatorname{Id}-z\otimes z).
\end{equation}
In \eqref{kernel:A}, $\gamma$ is a physical parameter, where $\gamma>0$ corresponds to the so-called \emph{hard} potentials, $\gamma=0$ to  the \emph{Maxwellian} potential, and $\gamma<0$ to the \emph{soft} potential.
With a further classification, $-2\leq \gamma <0$ is known as the \emph{moderately soft} potentials, and $-d\leq \gamma<-2$ as the \emph{very soft potentials}). The $\gamma=-\frac{d}{d-2}$ ($d\ge 3$) is 
known as the \emph{Coulomb} cases. Notice that in dimension three, the Coulomb potential exponents coincide with the dimension index $-d$. Here, We consider 
\begin{align*}
    \gamma\in(-\gamma_d,1],
\end{align*}
where $\gamma_d=\min(d,4)$. Concerning the choice of $\gamma_d$, the condition $\gamma>-d$ ensures the local integrability of $|z|^{\gamma}$. To the best of our knowledge, $\gamma=-4$ is the lower endpoint for which $\cH$-solutions remain well-defined, see Remark \ref{rmk-1} below.

Various interaction kernels $A$ and spatial kernels $\kappa$ will be discussed; the precise scope is detailed in Section \ref{subsec:kernel} below.
One relevant choice of the spatial kernel $\kappa: \R^d \to \R_+$ is $\kappa = \kappa^\sigma$, where $\sigma \in (0,1)$ is a fixed parameter. This is a renormalised kernel proportional to $\exp(-\langle x\rangle / \sqrt{\sigma})$, where $\langle z \rangle := \sqrt{1 + |z|^2}$ denotes the Japanese bracket.

By taking the limit $\sigma\to+\infty$, one recovers the homogeneous Landau equation. By taking the limit $\sigma\to0$ instead, the spatial kernel $\kappa^\sigma$ converges to a Dirac measure. In this case,  the fuzzy Landau equation \eqref{Landau-fuz} formally converges to the classical inhomogeneous Landau equation, which is given by
\begin{equation}
    \label{col:Landau-cla}
    \left\{
    \begin{aligned}
    &\d_tf+v\cdot\nabla_xf=Q_{\sf class}(f,f)\\
    &Q_{\sf class}(f,f)=\nabla_v \cdot\Big(\int_{\R^d}a\big(f(x,v_*)\nabla_v f(x,v)-f(x,v)\nabla_{v_*} f(x,v_*)\big)\dd v_*\Big).   
    \end{aligned}
    \right.
\end{equation}

The classical Landau equation, a fundamental partial differential equation in kinetic theory, describes the evolution of a particle distribution in a plasma, accounting for both free transport and Coulomb collisions. The classical Landau collision operator in \eqref{col:Landau-cla} is localised in the sense that two particles with velocities $v$ and $v_* \in \R^d$ are assumed to collide at the same spatial position $x \in \Omega$.

In the case of the fuzzy Landau equation \eqref{Landau-fuz}, we allow delocalised collisions. This means that two particles can interact with different states $(x, v)$ and $(x_*, v_*)$, where the positions $x$ and $x_*$ are coupled through the spatial kernel $\kappa$.

In the first article \cite{DH25} of this series of works, we showed that the fuzzy Landau equation \eqref{Landau-fuz} can be rigorously cast into the so-called \textit{GENERIC framework} via a variational characterisation. More precisely, we characterised $\cH$-solutions (Definition \ref{def:H}) to the fuzzy Landau equation \eqref{Landau-fuz} for two types of kernels $A: \R^d \to \R_+$, each associated with specific assumptions on the corresponding solution curves:
\begin{itemize}
    \item \label{T1}
    Type I: We consider
    \begin{equation*}
    A(|v-v_*|)=|v-v_*|^{2+\gamma},\quad \gamma\in (-\gamma_d,1].    
    \end{equation*}
    We consider the curves satisfying 
\begin{align*}
    f\in L^\infty([0,T];L^1_{0,2+|\gamma|}(\Doa))\cap L^1([0,T];L^1_{2,0}(\Doa)).
\end{align*}
The norm $\|f\|_{L^1_{a,b}}\defeq\|(\langle x\rangle^a+\langle v\rangle^b)f\|_{L^1(\Doa)}$. In addition, in the case of $\gamma\in(-\gamma_d,0)$, we assume $\langle v\rangle^{2+|\gamma|}f_t\in L^1([0,T];L^p_vL^1_x)$; In the case of $\gamma\in(0,1]$, we assume $f\in L^\infty([0,T];L^p_vL^1_x)$ for some $p>\frac{d}{d-|\gamma|}$.

    \item \label{T2} Type II: We consider
    \begin{equation}
    \label{ass-1}
    0<C^{-1}\langle v-v_*\rangle^{2+\gamma} \le A(v-v_*)\le C\langle v-v_*\rangle^{2+\gamma},\quad \gamma\in (-\infty,1].    
    \end{equation}
    We consider the curves satisfying \begin{equation*}
    f\in L^1([0,T];L^1_{2,2+\max(0,\gamma)}(\Doa)).    
    \end{equation*}
\end{itemize}

Throughout this article, the time $T \in(0,+\infty)$ is arbitrary and fixed.

n this article, we show the existence of global-in-time $\cH$ solutions, which are defined in Definition \ref{def:H}. We have the following main result.
\begin{theorem}
 Let kernels $A$ and $\kappa$ satisfy the assumptions in Section \ref{subsec:kernel}. Let $f_0\in L^1_{2,2+\max(0,\gamma)}(\Doa;\R_+)$ and $|f\log f|\in L^1(\Doa)$. Then there exists an $\cH$-solution to the fuzzy Landau equation \eqref{Landau-fuz} with initial value $f_0$. Moreover, the  following energy and entropy inequalities hold for all $t\in[0,T]$
\begin{equation*}
\label{int:ineq:thm}
    \begin{aligned}
&\int_{\Doa}|v|^2f_t\dd x\dd v\le\int_{\Doa}|v|^2f_0\dd x\dd v\\
\text{and} \quad &\cH(f_t)-\cH(f_0)+\int_0^t\cD(f_s)\dd s\le0.
    \end{aligned}
        \end{equation*}    
\end{theorem}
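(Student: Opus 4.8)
The plan is to construct the $\cH$-solution via a regularisation/compactness scheme, following the standard De Lellis--Gianazza--Villani-type strategy for $\cH$-solutions of the homogeneous Landau equation, adapted to the spatially inhomogeneous fuzzy setting. First I would regularise the data and the kernel: replace $A$ by a truncated/mollified version $A_n$ that is bounded and bounded below (e.g.\ $A_n = \min(A, n)$ or a mollification ensuring smoothness), replace $\kappa$ by a smooth bounded $\kappa_n$, and mollify $f_0$ to obtain smooth, rapidly decaying $f_0^n \ge \varepsilon_n e^{-|x|^2-|v|^2}$ with $f_0^n \to f_0$ in $L^1_{2,2+\max(0,\gamma)}$ and $\cH(f_0^n)\to \cH(f_0)$. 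For the regularised equation the collision operator has smooth bounded coefficients and the transport term is linear, so one obtains a global smooth nonnegative solution $f^n$ by a fixed-point or parabolic-regularity argument (this is where one would cite standard results or the construction in \cite{DH25}); crucially $f^n$ satisfies \emph{exactly} the mass conservation, the energy identity, and the entropy identity $\cH(f^n_t) + \int_0^t \cD_n(f^n_s)\,ds = \cH(f^n_0)$.

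Next I would extract uniform a priori bounds that are stable under the regularisation. Mass and energy conservation give a uniform bound in $L^\infty_t L^1_{0,2}$; the entropy identity gives a uniform bound on $\cH(f^n_t)$ and on $\int_0^T \cD_n(f^n_s)\,ds$. To control the spatial weight $\langle x\rangle^2$ one uses the transport structure: $\frac{d}{dt}\int \langle x\rangle^2 f^n = \int 2x\cdot v\, f^n \le \int(\langle x\rangle^2 + |v|^2) f^n$, so Grönwall plus the energy bound gives a uniform $L^\infty_t L^1_{2,0}$ bound, hence a uniform bound in $L^\infty_t L^1_{2,2+\max(0,\gamma)}$ after also propagating velocity moments using the dissipation (the $\cD$ term controls a weighted $H^1_v$-type quantity from which higher velocity moments follow, in the hard-potential case $\gamma>0$ directly, in the soft case via the entropy-dissipation/interpolation as in the Type I/II framework of \cite{DH25}). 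The entropy bound also yields weak-$L^1$ compactness of $\{f^n_t\}$ (Dunford--Pettis) uniformly in $t$, and the dissipation bound controls $\sqrt{f^n}$ in a weighted $L^2_t H^1_v$ space. Using the equation to bound $\d_t f^n + v\cdot\nabla_x f^n$ in a negative-order space and a velocity-averaging lemma (or directly Aubin--Lions in the $x,v$ variables combined with the transport structure), I would upgrade to strong $L^1_{\loc}$ convergence of a subsequence $f^n \to f$.

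Then I would pass to the limit in the weak formulation. The linear transport term passes trivially under weak convergence. For the collision term the delicate point is the bilinear, second-order, degenerate-elliptic structure $\nabla_v\cdot(\bar a f\nabla_v f - \cdots)$: one rewrites it, as is standard for $\cH$-solutions, in a form involving products of $\sqrt{f}$, $\sqrt{f}\,\nabla_v\log f$ (i.e.\ $\nabla_v\sqrt f$) and the bounded-in-a-weighted-sense matrix $\bar a^n = \kappa_n * (A_n \Pi) * f^n$ or the symmetrised four-point kernel; weak convergence of $\nabla_v\sqrt{f^n}$ from the dissipation bound plus strong convergence of $\sqrt{f^n}$ and of the kernel averages (the $\kappa$-convolution in $x$ gives extra compactness in $x$) lets one identify the limit, with lower semicontinuity of $\cD$ under these convergences giving the entropy \emph{inequality} $\cH(f_t)-\cH(f_0)+\int_0^t\cD(f_s)\le 0$ rather than equality; the energy inequality similarly follows from the uniform energy identity and Fatou/weak lower semicontinuity. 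The main obstacle is precisely this limit passage in the collision term in the very soft / Coulomb regime $\gamma \in (-\gamma_d, 0)$: there the coefficient $A(|z|)=|z|^{2+\gamma}$ is singular (for $\gamma<-2$) or only the weak integrability $|z|^\gamma\in L^1_{\loc}$ is available, so one must show the nonlinear products are equi-integrable and do not concentrate — this is exactly where the extra assumptions recorded in the Type I/II framework ($\langle v\rangle^{2+|\gamma|}f\in L^1_t L^p_v L^1_x$, or the two-sided bound \eqref{ass-1}) are invoked, and where the spatial delocalisation via $\kappa$ actually \emph{helps} by smoothing in $x$. I would handle this by a careful truncation of the velocity-difference singularity, using the entropy dissipation to bound the error uniformly, and then removing the truncation.
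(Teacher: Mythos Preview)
Your overall scheme---regularise, get uniform entropy/energy/dissipation bounds, extract a limit, pass to the limit with lower semicontinuity giving inequalities---matches the paper's. Two substantive differences are worth flagging.

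First, the paper's approximate problem is not just a kernel truncation: it adds an artificial viscosity $\tfrac{1}{n}\Delta_v f^n$ to the equation (see \eqref{FL:app}). This is what guarantees uniform parabolicity in $v$ and hence a smooth global solution by standard theory; the coercivity Lemma~\ref{lem:coer} alone only gives $\xi^T\bar a\,\xi\gtrsim \langle v\rangle^\gamma|\xi|^2$, which degenerates at infinity for $\gamma<0$. The viscosity also explains why the approximate energy is not exactly conserved ($\tfrac{d}{dt}\int|v|^2f^n=\tfrac{2}{n}\int f^n$), which is why the limit statement is an energy \emph{inequality}. Your claim of an exact energy identity at the approximate level would require a different regularisation.

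Second, and more interestingly, you propose velocity averaging or Aubin--Lions to obtain \emph{strong} $L^1_{\loc}$ compactness before attacking the bilinear collision term. The paper does not do this, and the point is that in the \emph{fuzzy} equation it is not needed: since $f$ and $f_*$ live in \emph{separate} spatial variables $(x,v)$ and $(x_*,v_*)$, the nonlinearity $f^n f^n_*$ is a genuine tensor product, and weak-$L^1$ convergence of $f^n$ (from Dunford--Pettis plus, in Case~1, time-equicontinuity and a refined Ascoli--Arzel\`a argument) already yields $f^nf^n_*\rightharpoonup ff_*$ in $L^1(\R^{4d})$. The paper therefore passes to the limit directly in the fully-integrated-by-parts weak form \eqref{weak:app-1}, where all derivatives sit on the test function and the integrand is $\kappa A_n f^n f^n_*\Phi$ with $\Phi$ compactly supported. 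In the very soft case $\gamma\in(-\gamma_d,-2)$ the singularity at $v=v_*$ is excised by a cutoff at scale $\delta$, and the remainder is shown to be $O(\delta^\varepsilon)$ \emph{uniformly in $n$} using the local $L^1_xL^k_v$ bound coming from the uniform entropy-dissipation control (Lemma~\ref{lem:sobo}); this is exactly the truncation you anticipate, but it is carried out with only weak convergence. Only \emph{after} obtaining a limit $f$ satisfying the integrated weak form does the paper verify the $\cH$-solution identity \eqref{FL:H:ab}, via a separate mollification $f^\beta=(\sqrt f*_v\eta^\beta)^2$ and integration by parts. Your route through $\sqrt{f^n}$ and $\nabla_v\sqrt{f^n}$ would work too, but it conflates the compactness step with the $\cH$-formulation step and imports machinery (averaging lemmas) that the delocalised structure renders unnecessary---indeed, this simplification over the classical inhomogeneous equation is one of the conceptual payoffs of the fuzzy model.
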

In the above, the Boltzmann entropy and the entropy dissipation are defined as follows
\begin{align*}
    \cH(f)=\int_{\Doa}f\log f\dd x\dd v,
\end{align*}
and
\begin{equation*}
\cD(f)=\frac12\int_{(\Doa)^2}\kappa A ff_*\big|\Pi_{(v-v_*)^\perp}\big(\nabla_v\log f-\nabla_{v_*}\log f_*\big)\big|^2\dd x_*\dd v_*\dd x\dd v\geq 0.
\end{equation*}
The propagation of moments and $L^p$-norms of the $\cH$-solutions will be discussed in Section \ref{sec:reg}, where we show that, under appropriate assumptions on the initial value $f_0$, for some $s \in (2, +\infty)$ and $p \in (1, +\infty)$, we have 
\begin{align*}
f\in L^\infty([0,T];L^1_{0,s}(\Doa))\quad\text{and}\quad  f\in L^\infty([0,T];L^p(\Doa)).   
\end{align*}
In Section \ref{sec:pre}, we introduce the precise types of kernels $A$ and $\kappa$ considered, along with the notations and functional inequalities used throughout the paper. In Section \ref{sec:existence}, we show a coercivity result and prove the existence of $\cH$-solutions. 

We compare the assumptions in \cite{DH25} with the results in this article. Concerning type I, the propagation of moments is shown to be global-in-time for $\gamma \in [-2,1]$ in Lemma \ref{lem:M-1}. In Lemma \ref{lem:M-2}, we treat the very soft potential cases: $\gamma \in (-3,-2)$ for $d=3$, and for $d \ge 4$, we show propagation for $\gamma \in (1 - \sqrt{21}, -2)$.
Regarding the propagation of $L^p$ norms, aside from the case $\kappa \equiv 1$, instead of directly estimating $L^p_vL^1_x$ norms, we consider $L^p$ norms in both $x$ and $v$, which covers the $\T^d \times \R^d$ case. By Minkowski's and Hölder's inequalities, we have
\begin{align*}
\|f\|_{L^p_v L^1_x} \le \|f\|_{L^1_x L^p_v} \le \|f\|_{L^p(\Dot)}.
\end{align*}
The case $\gamma \in [-2,0]$ is discussed in Lemmas \ref{lem:Lp-1} and \ref{lem:Lp-2}. The hard potential case with $\kappa \equiv 1$ is addressed in Lemma \ref{lem:Lp-3}. Local-in-time estimates for very soft potentials are given in Lemma \ref{lem:Lp-1}, and a weighted $L^p$ estimate for small initial data is discussed in Lemma \ref{lem:Lp-3}.  Concerning type II, the propagation of moments for $\gamma \in (-\infty, 1]$ is shown in Lemma \ref{lem:M-1}.

\textbf{Related works.} One closely related model is the \emph{spatially} homogeneous Landau equation
\begin{equation}
    \label{Landau-homo}
     \left\{
    \begin{aligned}
    &\partial_t f(v)=Q_{\sf homo}(f,f)(v),\quad f_t(v):\R^d\to\R_+\\
    &Q_{\sf hom}(F,F)=\nabla_v \cdot\Big(\int_{\R^d}A\big(F(v_*)\nabla_v F(v)-F(v)\nabla_{v_*} F(v_*)\big)\dd v_*\Big).  
          \end{aligned}
    \right.
\end{equation}
For this spatially homogeneous model, the existence of weak/$\cH$-solutions, uniqueness and propagation of regularities have been studied, for example, in \cite{Vil98,Vil98b,DV00a,DV00b,Wu14,ALL15,FG09}. Recently, \cite{NL23,NL25,Vil25} show that its classical solutions never blow up by showing that its Fisher information is monotone decreasing in time. Concerning the inhomogeneous Landau equation \eqref{col:Landau-cla}, the global-in-time renormalised solutions were shown in \cite{Vil96}, among others. However, the existence of global-in-time smooth solutions is still left open. In the recent preprint,
\textcite{gualdani2025fuzzylandauequationglobal} establish the global-in-time existence and uniqueness of smooth solutions and the decreasing monotonicity of the Fisher information for the fuzzy Landau equation \eqref{Landau-fuz} for the moderately soft potentials and the spatial kernels $\kappa\sim\langle x-x_*\rangle^\lambda$, which is a different class from ours,  $\kappa\sim1$ (see \eqref{k-1}) and  $\kappa\sim\exp(-C\langle x-x_*\rangle)$ (see \eqref{k-2}).

\subsection*{Acknowledgements}
M. H. D is funded by an EPSRC Standard Grant EP/Y008561/1. Z.~H. is funded by the Deutsche Forschungsgemeinschaft (DFG, German Research Foundation) – Project-ID 317210226 – SFB 1283.

\section{Preliminary}\label{sec:pre}

In Section \ref{subsec:kernel}, we present the precise assumptions on the kernels $A$ and $\kappa$. In Section \ref{subsec:notation}, we introduce the notations and functional inequalities used throughout the paper. In Section \ref{sec:dissipation}, we show that the weighted Fisher information and weighted $L^p$ norms (for not-too-large $p$) can be controlled by the entropy dissipation.

Throughout this article, the time $T \in(0,+\infty)$ is arbitrary and fixed.

 The constant $C>0$ denotes a universal constant, which may vary from line to line. 
\subsection{Collision and spatial kernels}\label{subsec:kernel}

The velocity kernel $a:\R^d\to\R^{d\times d}$ is given by
\begin{align*}
a(v-v_*)=A(|v-v_*|)\Pi_{|v-v_*|^\perp}.
\end{align*}
The Coulomb interaction kernel $A:\R\to\R_+$ is given by
\begin{equation}
    \label{A-1}
    A(|v-v_*|)=|v-v_*|^{2+\gamma}\quad\gamma\in(-\gamma_d,1].
\end{equation}
We also consider the following kernels for $\gamma\in(-\infty,1]$
\begin{equation}
    \label{A-2}
    0<C^{-1}\langle v-v_*\rangle^{2+\gamma}\le A(|v-v_*|) \le C\langle v-v_*\rangle^{2+\gamma}.
\end{equation}
and 
\begin{equation}
    \label{A-3}
    0<C^{-1}\langle v-v_*\rangle^{\gamma}\le\frac{A(|v-v_*|)}{|v-v_*|^2}\le C\langle v-v_*\rangle^{\gamma}.
\end{equation}
We note that, for kernels satisfying \eqref{A-2} and \eqref{A-3}, when $\gamma \in (-\infty, -2]$, there is no singularity near $|v-v_*| \sim 0$. The only genuinely very soft potential case arises in \eqref{A-1}, where 
\begin{align*}
A=|v-v_*|^{2+\gamma}\quad\text{with}\quad\gamma \in (-\gamma_d,-2).   
\end{align*}
We note that the kernel \eqref{A-2} depends only on $|v-v_*|$, which is thus less general than the kernel $A = A(v-v_*)$ in \eqref{ass-1}.

We consider the spatial kernel $\kappa:\R^d\to\R_+$ that has positive lower and upper bounds 
\begin{equation}
\label{k-1}
0<C^{-1}\le \kappa(x) \le C.    
\end{equation}
For example, one can take $\kappa=1$ or $\kappa(x)\sim\exp(-C\langle x\rangle)$ on the domain $\T^d$. 

In some cases, we allow vanishing spatial kernels of the form 
\begin{equation}
\label{k-2}
\kappa(x)=G(x):=k_1\exp(-k_2\langle x\rangle)\quad x\in\R^d
\end{equation}
for some constants ${k_1,\,k_2}>0$ such that $\|\kappa\|_{L^1(\R^d)}=1$. 
Notice that we take $\kappa=\kappa^\sigma$, where $\kappa^\sigma$ is a renormalised kernel proportional to $\exp(-\langle z/\sqrt\sigma\rangle)$. When $\sigma\to0$, the spatial kernel $\kappa^\sigma$ converges to a Dirac measure. 

\subsection{Notations}
\label{subsec:notation} 
For simplification, we write the domain as $\Do$, and unless otherwise stated, it refers to both $\Do$ and $\Dot$. 

Let $\langle z\rangle:=\sqrt{1+|z|^2}$ denote the Japanese jacket. 

Let
\begin{align*}
\gamma_+=\max(0,\gamma).    
\end{align*}

Let $a,\,b\in\R$ and $p,\,q\in[1,\infty]$. The functional spaces $ L^1_{a,b}(\Do)$, $L^p_vL^q_x$ and $L^q_xL^p_v$ consist of all functions $f$ such that 
\begin{align*}
\|f\|_{L^1_{a,b}(\Do)}&\defeq\int_{\Do}\big(\langle x\rangle^a+\langle v\rangle^b \big)f(x,v)\dd x\dd v<+\infty,\\
\|f\|_{L^p_vL^q_x}&\defeq\Big(\int_{\R^d}\Big(\int_{\R^d}|f(x,v)|^q\dd x\Big)^{\frac{p}{q}}\dd v\Big)^{\frac{1}{p}}<+\infty,\\
\|f\|_{L^q_xL^p_v}&\defeq\Big(\int_{\R^d}\Big(\int_{\R^d}|f(x,v)|^p\dd v\Big)^{\frac{q}{p}}\dd x\Big)^{\frac{1}{q}}<+\infty
\end{align*}
respectively.

We define the space
\begin{align*}
     L\log L(\Do)&=\Big\{f\in L^1(\Do;\R_+)\mid \|f\log f\|_{L^1(\Do)}<+\infty\}.
\end{align*}

For $s\ge0$, we define the moment
\begin{equation*}
    \label{moments}
    M_s(f)=\int_{\Do} \langle v\rangle^sf(x,v)\dd x\dd v.
\end{equation*}

Throughout the article, we use the shortened notation 
$$\dd\eta=\dd x_*\dd v_*\dd x\dd v$$ to denote the Lebesgue measure on $\G$. 

The Boltzmann entropy functional for $f\in \cS'(\R^d;\R_+)$ is defined by
\begin{align*}
    \cH(f)=\int_{\Do}f\log f\dd x\dd v
\end{align*}
if $\max(\log f,0)$ is integrable; Otherwise, we set $\cH(f)=+\infty$.
\begin{remark}
\label{H:bdd}
If $f\in L^1_{2,2}(\Do)$ and $\cH(f)<+\infty$, then we have $f\in L\log L(\Do)$. The proof can be found in, for example, \cite[Theorem 3.5]{DH25}.   
\end{remark}

We are going to use the following known estimates.
\begin{lemma}
    \label{lem:ineq}
    \begin{enumerate}[(1)]
        \item (Sobolev embedding, \cite{BCD11}) Let $k=\frac{d}{d-2}$ if $d\ge 3$, and $k\in(1,+\infty)$ if $d=2$.
Then we have
    \begin{equation*}
    \label{sobo}
\Big(\int_{\R^d}\varphi^{2k}\dd v\Big)^{\frac{1}{2k}}\lesssim_k\Big(\int_{\R^d}|\nabla_v \varphi|^2\dd v\Big)^{\frac{1}{2}}.
    \end{equation*}

    \item (Hardy--Littlewood--Sobolev inequality, \cite{Ste70}) Let $p,q\in(1,\infty)$ and $\alpha\in(0,d)$ such that $\frac{1}{p}+\frac{1}{q}+\frac{\alpha}{d}=2$. The following inequality holds
\begin{equation*}
    \label{ineq:HLS}
    \Big|\int_{\Do}\varphi(v)\psi(w)|v-w|^{-\alpha}\dd v\dd w\Big|\lesssim_{p,\alpha,d} \|\varphi\|_{L^p(\R^d)}\|\psi\|_{L^q(\R^d)}.
\end{equation*}
\item (\cite[Corollary 2.11]{DH25})  Let $\alpha\in(0,d)$ and $\delta\in\R$. Let $f\in\cS(\Do)$. The following estimate holds
\begin{equation*}  
         \sup_{v\in\R^d}\|\langle v\rangle^\delta f*_v|v|^{-\alpha}\|_{L^1_x}\lesssim \|\langle v\rangle^\delta f\|_{L^1(\Do)}+ \min(\|\langle v\rangle^\delta f\|_{L^p_vL^1_x},\|\langle v\rangle^\delta f\|_{L^1_xL^p_v})
      \end{equation*}
      for some $p>\frac{d}{d-\alpha}$.

    \end{enumerate}
\end{lemma}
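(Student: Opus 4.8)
The plan is the following. Items (1) and (2) are classical, while (3) is taken from the companion paper, so the strategy is to pin down the right reference for each and, for (3), to record the short argument behind it. For item (1) I would invoke the critical Sobolev embedding $\dot H^1(\R^d)\hookrightarrow L^{2d/(d-2)}(\R^d)$ when $d\ge3$---observing that $2k=\tfrac{2d}{d-2}$ precisely when $k=\tfrac{d}{d-2}$---together with the corresponding subcritical Gagliardo--Nirenberg/Sobolev inequality for any finite exponent when $d=2$; both are stated in \cite{BCD11}. For item (2) I would simply cite the Hardy--Littlewood--Sobolev inequality in its usual form from \cite{Ste70}.

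For item (3), which is \cite[Corollary 2.11]{DH25}, the plan runs as follows. Replacing $f$ by $|f|$ (which only enlarges the left-hand side and leaves the right-hand side unchanged) and applying Tonelli's theorem to carry out the $x$-integration first, one obtains, for each fixed $v$,
\[
\big\|(\langle v\rangle^\delta f)\ast_v|v|^{-\alpha}\big\|_{L^1_x}\le\int_{\R^d}|v-w|^{-\alpha}\,h(w)\dd w,\qquad h(w):=\langle w\rangle^\delta\!\int_{\R^d}|f(x,w)|\dd x,
\]
so that the weight is absorbed into a weighted velocity marginal $h\ge0$ and no further weight bookkeeping is needed. I would then split the Riesz kernel as $|u|^{-\alpha}=|u|^{-\alpha}\mathbf 1_{\{|u|\le1\}}+|u|^{-\alpha}\mathbf 1_{\{|u|>1\}}$. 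On the far piece $|u|^{-\alpha}\le1$ (since $\alpha>0$), so its contribution is at most $\|h\|_{L^1_v}=\|\langle v\rangle^\delta f\|_{L^1(\Do)}$. On the near-singular piece, Hölder's inequality in $u$ bounds the contribution by $\big\||u|^{-\alpha}\mathbf 1_{\{|u|\le1\}}\big\|_{L^{p'}_v}\|h\|_{L^p_v}$, and the first factor is finite exactly when $\alpha p'<d$, i.e. when $p>\tfrac{d}{d-\alpha}$; finally $\|h\|_{L^p_v}=\|\langle v\rangle^\delta f\|_{L^p_vL^1_x}$, while Minkowski's integral inequality gives $\|h\|_{L^p_v}\le\|\langle v\rangle^\delta f\|_{L^1_xL^p_v}$, which yields the minimum on the right-hand side. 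Adding the two contributions and taking the supremum over $v$ gives the claim.

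I do not expect any genuine obstacle: all the ingredients are textbook. The only two points needing a little care are the placement of the weight $\langle v\rangle^\delta$, which is handled automatically once one passes to the marginal $h$, and the exponent threshold $p>\tfrac{d}{d-\alpha}$, which is precisely what makes the singular part of the Riesz kernel locally $L^{p'}$ in velocity; for the full details of (3) one may simply defer to \cite{DH25}.
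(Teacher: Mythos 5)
Your proposal is correct and matches the paper's approach: the paper states Lemma 2.1 as a collection of known results, citing \cite{BCD11} for the Sobolev embedding, \cite{Ste70} for Hardy--Littlewood--Sobolev, and \cite[Corollary 2.11]{DH25} for item (3), and you do exactly the same. Your supplementary sketch for item (3)---absorbing the weight into the velocity marginal $h$, splitting the Riesz kernel into near and far pieces, bounding the far piece by $\|h\|_{L^1_v}$ and the near piece by Hölder with the threshold $\alpha p'<d\Leftrightarrow p>\frac{d}{d-\alpha}$, and then passing from $L^p_vL^1_x$ to $L^1_xL^p_v$ by Minkowski---is a correct account of the standard argument behind that corollary and adds nothing the reader couldn't recover from the cited companion paper.
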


\subsection{Entropy dissipation and \texorpdfstring{$L^p$}{TEXT}-bounds}
\label{sec:dissipation}

Let $f:[0,T]\times\Do\to \R_+$ be a solution to the fuzzy Landau equation \eqref{Landau-fuz} with initial value $f|_{t=0}=f_0$. At least formally, the following entropy identity holds
\begin{equation}
\label{H}
\cH(f_T)-\cH(f_0)=-\int_0^T\cD(f_t)\dd t\leq 0.
\end{equation}
The entropy dissipation is defined as 
\begin{equation}
\label{entropy dissipation}
\cD(f)=\frac12\int_{\G}\kappa ff_*\big|\tilde\nabla \log f\big|^2\dd\eta\geq 0,
\end{equation}
where the fuzzy Landau gradient $\tilde\nabla$ is defined as in Part I \cite{DH25}
\begin{equation*}
    \tilde\nabla f= \sqrt{A}\Pi_{(v-v_*)^\perp}(\nabla_vf-\nabla_{v_*}f_*).
   \end{equation*}

We define the weighted Fisher  information 
\begin{equation*}
\label{FF}
I(f)=\int_{\Do}\langle v\rangle ^{\gamma}|\nabla_v \sqrt{f}|^2\dd x\dd v.
\end{equation*}

We showed in \cite[Lemma 3.6]{DH25} that the weighted Fisher information is bounded by entropy dissipation. For completeness, we now present a refined version of this lemma, along with a sketch of the proof.
\begin{lemma}\label{lem:Des}
Let $f\in \cP\cap L\log L\cap\cS(\Do)$. Let $M_2(f)\le E_0$ and $\cH(f)\le H_0$. 
Let $\gamma\in(-\gamma_d,0]$. Let $A$ satisfies \eqref{A-1} or \eqref{A-3}, and $0\le \kappa\le C$.
Then we have
        \begin{equation*}
        \label{ineq:Des}
        \begin{aligned}
    \int_{\Do}\langle v\rangle ^{\gamma}|\nabla_v\sqrt f|^2\dd x\dd v&\le C_0\big(1+\cD(f)\big),
        \end{aligned}
     \end{equation*}
where 
\begin{equation}
\label{C}
C_0=C_0\big(d,\gamma, \|\kappa\|_{L^\infty},E_0,H_0\big)>0.   \end{equation}
\end{lemma}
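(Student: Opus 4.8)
The plan is to extract the weighted Fisher information from the entropy dissipation by recognising $\cD(f)$, after an integration by parts, as a quadratic form against an \emph{averaged diffusion matrix}. Since $\int_\Do\langle v\rangle^\gamma|\nabla_v\sqrt f|^2\dd x\dd v=\tfrac14\int_\Do\langle v\rangle^\gamma f\,|\nabla_v\log f|^2\dd x\dd v$, it suffices to bound the latter. I would introduce
\[
\bar{\mathsf a}(x,v):=\int_\Do\kappa(x-x_*)\,A(|v-v_*|)\,\Pi_{(v-v_*)^\perp}\,f(x_*,v_*)\dd x_*\dd v_* ,
\]
and, writing $q=\nabla_v\log f$, $q_*=\nabla_{v_*}\log f_*$ and $P=A(|v-v_*|)\Pi_{(v-v_*)^\perp}$, expand $|\tilde\nabla\log f|^2=(q-q_*)^\top P\,(q-q_*)$ inside $\cD(f)$. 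Merging the two ``diagonal'' pieces via the symmetry of $\dd\eta$ under $(x,v)\leftrightarrow(x_*,v_*)$ gives the identity
\[
\cD(f)=\int_\Do f\,q^\top\bar{\mathsf a}(x,v)\,q\dd x\dd v-\int_\G\kappa\,ff_*\,q^\top P\,q_*\dd\eta .
\]

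Next I would dispose of the cross term. Writing $f\,q=\nabla_v f$, $f_*q_*=\nabla_{v_*}f_*$ and integrating by parts twice (once in $v_*$, once in $v$; $\kappa(x-x_*)$ is independent of the velocities), the key algebraic input is the divergence identity $\partial_{z_j}(A(|z|)\Pi_{z^\perp})_{ij}=-(d-1)\tfrac{A(|z|)}{|z|^2}z_i$, which holds because $\Pi_{z^\perp}z=0$ annihilates the term in which the derivative falls on $A$. After a further divergence in $v$ the cross term collapses to $(d-1)\int_\G\kappa\,ff_*\,\big(d\,B+rB'\big)(|v-v_*|)\dd\eta$ with $B(r)=A(r)/r^2$, $r=|v-v_*|$; for the kernel \eqref{A-1} this is the nonnegative quantity $(d-1)(d+\gamma)\int_\G\kappa\,ff_*|v-v_*|^\gamma\dd\eta$. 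Bounding $\kappa\le C$, integrating out $x,x_*$ to reduce to the velocity marginal of $f$, and using $\gamma>-\gamma_d\ge-d$ together with $M_2(f)\le E_0$, $\cH(f)\le H_0$ (which give an $L^p$-bound on that marginal for some $p>1$) and the Hardy--Littlewood--Sobolev inequality (Lemma \ref{lem:ineq}(2)), I expect this to be $\le C_1=C_1(d,\gamma,\|\kappa\|_{L^\infty},E_0,H_0)$; for \eqref{A-3} the integrand is $\lesssim\langle v-v_*\rangle^\gamma\le1$ and the bound is immediate. Hence $\int_\Do f\,q^\top\bar{\mathsf a}\,q\dd x\dd v\le\cD(f)+C_1$.

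The main obstacle is the coercivity of $\bar{\mathsf a}$: I would show that $\bar{\mathsf a}(x,v)\ge c_0\langle v\rangle^\gamma\operatorname{Id}$ for a.e. $(x,v)$, with $c_0=c_0(d,\gamma,\kappa,E_0,H_0)>0$. Testing against a unit vector $\xi$ reduces this to a lower bound for $\int_{\R^d}A(|v-v_*|)\,|\Pi_{(v-v_*)^\perp}\xi|^2\,g_x(v_*)\dd v_*$, where $g_x(v_*):=\int_\Omega\kappa(x-x_*)f(x_*,v_*)\dd x_*$. One checks that, uniformly in $x$, $g_x$ has second moment $\lesssim E_0$, entropy $\lesssim_{E_0,H_0}1$, and mass bounded below — the last point needing $\kappa$ bounded below, which holds for the spatial kernels actually used ($\kappa\equiv1$, or the exponential kernel on $\T^d$). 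Then one runs the classical Desvillettes-type argument (as in \cite[Lemma 3.6]{DH25}): for bounded $|v|$, the entropy bound keeps a positive fraction of the mass of $g_x$ at distance $\gtrsim1$ from the line $v+\R\xi$, where $|\Pi_{(v-v_*)^\perp}\xi|^2$ is bounded below; for large $|v|$ one has $|v-v_*|\sim|v|$ on the bulk of $g_x$, while $|\Pi_{(v-v_*)^\perp}\xi|^2$ is of order $|v|^{-2}$ times the squared distance of $v_*$ to that line, so the integrand is $\gtrsim|v|^\gamma$ times that distance squared times $g_x$, and integrates to $\gtrsim\langle v\rangle^\gamma$ by the energy and entropy bounds. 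Making this quantitative and uniform in $x$, and handling the degenerate direction $\xi\parallel v$ (which is precisely where the weight $\langle v\rangle^\gamma$ — rather than a constant — is forced), is the technical heart of the argument.

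Combining the coercivity with the cross-term estimate, $\int_\Do\langle v\rangle^\gamma f\,|\nabla_v\log f|^2\dd x\dd v\le c_0^{-1}\int_\Do f\,q^\top\bar{\mathsf a}\,q\dd x\dd v\le c_0^{-1}(\cD(f)+C_1)$, and dividing by $4$ gives the claim with $C_0=\tfrac1{4c_0}\max(1,C_1)$. I expect the cross-term estimate to be routine except in the genuinely very soft range of \eqref{A-1}, where the Hardy--Littlewood--Sobolev step is tight and may require either higher moments or a small-constant absorption of the weighted $L^p$-norm that Section \ref{sec:dissipation} controls by $\cD(f)$; the coercivity lower bound on $\bar{\mathsf a}$ is where essentially all the real work lies.
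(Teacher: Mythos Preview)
Your route --- coercivity of the averaged matrix $\bar{\mathsf a}$ plus a separate bound on the cross term --- is genuinely different from the paper's, and as written has two real gaps.

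The paper does \emph{not} use coercivity of $\bar{\mathsf a}$ here. It follows the Desvillettes cross\nobreakdash-differentiation method (\cite{Des15}, \cite[Lemma~3.6]{DH25}): writing $\cD(f)=\tfrac14\int\kappa|v-w|^\gamma ff_*\sum_{i<j}|q_{ij}|^2\dd\eta$ with $q_{ij}=[(v-w),\nabla_v\log f-\nabla_w\log f_*]_{ij}$, one observes that the defining identity \eqref{eq:q_ij} for $q_{ij}$ is \emph{linear} in the three unknowns $[v,\nabla\log f]_{ij}$, $\partial_i\log f$, $\partial_j\log f$ at the point $(x,v)$, with coefficients $1,-w_i,w_j$. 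Testing against $\phi(w)\kappa(x-y)f(y,w)$ and against $w_i,w_j$ times that (with the explicit weight $\phi=\langle w\rangle^{-|\gamma|/2-5}$) yields a $3\times3$ system; solving it gives a pointwise bound $|\nabla_v\log f(x,v)|\lesssim\langle v\rangle+\int\kappa\phi f_*|q|$, and a single Cauchy--Schwarz pairs the last term directly with $\cD(f)$. Neither a lower bound on $\bar{\mathsf a}$ nor the singular convolution $\int ff_*|v-v_*|^\gamma$ ever appears.

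Your first gap is the coercivity step. A uniform\nobreakdash-in\nobreakdash-$x$ bound $\bar{\mathsf a}(x,v)\ge c_0\langle v\rangle^\gamma\mathrm{Id}$ requires $g_x=\int\kappa(x-x_*)f(x_*,\cdot)\dd x_*$ to have mass bounded below independently of $x$, i.e.\ $\kappa$ bounded below. The lemma only assumes $0\le\kappa\le C$; for the exponential kernel on $\R^d$ the paper's own Lemma~\ref{lem:coer} gives merely $\bar{\mathsf a}\gtrsim\langle v\rangle^\gamma e^{-C\langle x\rangle}\mathrm{Id}$, which is useless for an $x$\nobreakdash-integrated bound. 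The Desvillettes system avoids this because both its coefficient matrix and its right\nobreakdash-hand side carry the same $\kappa$\nobreakdash-weighted moments, so only $\|\kappa\|_{L^\infty}$ enters.

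Your second gap is the cross\nobreakdash-term bound for $\gamma<0$. The claim that $M_2\le E_0$ and $\cH\le H_0$ ``give an $L^p$\nobreakdash-bound on the velocity marginal for some $p>1$'' is false: $L\log L$ does not embed in any $L^p$, $p>1$, so HLS cannot be invoked. The absorption you sketch would have to pass through an inequality of the type $\int ff_*|v-v_*|^\gamma\le\varepsilon I(f)+C_\varepsilon$; the natural route to this is via the Sobolev/interpolation estimates of Lemma~\ref{lem:sobo}, but those are \emph{consequences} of the present lemma (so the argument is circular) and in any case require moments $M_s$ with $s>2$, which the hypotheses do not provide. The paper's approach never meets this obstacle: its only ``remainder'' is $\int\kappa\phi f_*|q|$, which Cauchy--Schwarz bounds by $\cD(f)^{1/2}$ times a moment of $f$ of order at most $2$.
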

\begin{proof}
The proof is closely following \cite[Lemma 3.6]{DH25} for the fuzzy cases (\cite{Des15} for the homogenous cases). 
We use the notation $[x,y]_{ij}= x_iy_j-x_jy_i$. The entropy dissipation \eqref{entropy dissipation} can be written as
\begin{equation*}
\label{D-qf}
\cD(f)=\frac14\int_{\G}\kappa(x-y)|v-w|^\gamma f(x,v)f(y,w)|q^f|^2\dd \eta,
\end{equation*}
where $q^f=(q_{ij})_{i,j}$ and 
    \begin{equation}
    \label{eq:q_ij}
    \begin{aligned}
&\Big[v,\frac{\nabla_v f}{f}(x,v)\Big]_{ij}-w_i  \frac{\d_if}{f}(x,v)  +w_j  \frac{\d_jf}{f}(x,v)\\
    =&{}v_i  \frac{\d_jf}{f}(y,w)  -v_j  \frac{\d_if}{f}(y,w)-\Big(w_i\frac{\d_j f}{f}(y,w)-w_j\frac{\d_i f}{f}(y,w)\Big)+q_{ij}.
    \end{aligned}
    \end{equation}

Compared to \cite{DH25}, we obtain an improved constant $C$ in \eqref{C}, which depends only on moments up to order $2$, specifically, $M_2(f)\le E_0$. We define a weighted function  $\phi(r)=(1+2r)^{-\frac{|\gamma|}{4}-\frac{5}{2}}$ and we write $\phi=\phi(|w|^2/2)=\langle w\rangle^{-\frac{|\gamma|}{2}-5}$. Notice that $|\nabla\phi|\lesssim \langle w\rangle^{-\frac{|\gamma|}{2}-6}$ and $|\phi|+|\phi'|\lesssim 1$.

We test \eqref{eq:q_ij} by $\phi \kappa(x-y) f(y,w)$, $\phi \kappa w_jf(y,w)$ and $\phi \kappa w_if(y,w)$ to derive a $3\times3$-system of linear equations for $v_i\frac{\d_jf}{f}(x,v)-v_j\frac{\d_if}{f}(x,v)$, $\frac{\d_if}{f}(x,v)$ and $-\frac{\d_jf}{f}(x,v)$. One can solve the system to dereive the following estimates
\begin{align*}
    \Big| \frac{\nabla_v f}{f}(x,v)\Big|
     &\lesssim\|f\|_{L^1_{0,2}(\Do)}^2\Big(\langle v\rangle\int_{\Do}f\langle w\rangle^2(\phi+\phi')\kappa\dd y\dd w+\int_{\Do}\phi \kappa f|q|\langle w\rangle\dd y\dd w\Big)\\
      &\lesssim\|f\|_{L^1_{0,2}(\Do)}^2\Big(\langle v\rangle\|\kappa\|_{L^\infty}\|f\|_{L^1_{0,2}(\Do)}+\int_{\Do}\kappa f|q|\langle w\rangle^{\frac{|\gamma|}{2}-4}\dd y\dd w\Big).
\end{align*}

Hence, by using Cauchy--Schwarz inequality, we have 
    \begin{align*}
        &\int_{\Do}\langle v\rangle^{\gamma}f(x,v)\Big| \frac{\nabla_v f}{f}(x,v)\Big|^2\dd x\dd v\\
\lesssim&{}\|f\|_{L^1_{0,2}(\Do)}^6\Big(\|f\|_{L^1_{0,2}(\Do)}\\
&+ \int_{\Do}\langle v\rangle^{\gamma}f(x,v)\big|\int_{\Do}\kappa f(y,w)|q|\langle w\rangle^{\frac{|\gamma|}{2}-4}\dd y\dd w\big|^2\dd x\dd v\Big)\\
\lesssim&{}\|f\|_{L^1_{0,2}(\Do)}^6\Big(\|f\|_{L^1_{0,2+\gamma_+}(\Do)}\\
&+ \int_{\Do}f(x,v)\big(\int_{\Do}\kappa f(y,w)|q|^2|v-w|^{\gamma}\dd y\dd w\big)\\
&\times\Big(\int_{\Do}\kappa f(y,w)|v-w|^{-\gamma}\langle w\rangle^{|\gamma|-8}\langle v\rangle^{\gamma}\Big)\dd x\dd v\Big)\\
\lesssim&{}\|f\|_{L^1_{0,2}(\Do)}^6\big(\|f\|_{L^1_{0,2}(\Do)}+ \cD(f)\big).
    \end{align*}

\end{proof}

As direct a consequence of Lemma \ref{lem:Des}, we have $\sqrt{f}\in L^1_tL^2_x(H^1_v)_{\loc}$. In the following, we show more $L^p$-estimates by using of Lemma \ref{lem:Des}.

We recall the entropy inequality \eqref{H}
\begin{align*}
\cH(f_T)-\cH(f_0)+\int_0^T\cD(f_t)\dd t\leq 0.
\end{align*}
Notice that $\cH(f_0)<+\infty$ and $\cH(f_t)>-\infty$ ensure that $\cD(f)\in L^1([0,T])$.

As in Remark \ref{H:bdd}, $\cH(f)>-\infty$ is ensured by the boundedness of  $\|f\|_{L^1_{2,2}(\Do)}$. The propagation of $\|f_0\|_{L^1_{2,2}(\Do)}<\infty$ is shown in Section \ref{sec:existence} below. Hence, it is reasonable to assume
\begin{equation*}
    \label{D}
    \int_0^T\cD(f_t)\dd t<+\infty.
\end{equation*}

Formally, the following mass, momentum and energy conservation laws hold for the fuzzy Landau equation \eqref{Landau-fuz}
\begin{equation*}
\label{cl}
\int_{\Do} (1,v,|v|^2)f_t\dd x\dd v=\int_{\Do} (1,v,|v|^2)f_0\dd x\dd v
\end{equation*}
for all $t\in[0,T]$. Without loss of generality, we assume (which will be shown in Section \ref{sec:existence} below)
\begin{align*}
\|f\|_{L^1(\Do)}=1\quad  \text{and}\quad \|\langle v\rangle ^2f\|_{L^1(\Do)}\le E_0.   
\end{align*}

We have the following Lemma of $L^p$-estimates.

        \begin{lemma}\label{lem:sobo}
       Let $\gamma\in(-\gamma_d,0]$. Let $k=\frac{d}{d-2}$ if $d\ge3$, and $k\in(1,\infty)$ if $d=2$.
      
         \begin{enumerate}[(1)]

              \item Let $f$ satisfy the assumption in Lemma \ref{lem:Des}. We have $f\in L^TL^1_x(L^k_v)_{\loc}$ and $\langle v\rangle^\gamma f\in L^1_xL^k_v$, and the following estimates hold
              \begin{equation}
              \label{ineq:w:k}
                  \|\langle v\rangle^\gamma f\|_{L^1_xL^k_v}\le C \cD(f),
              \end{equation}
              where the constant depends on $d$ and $C_0$ as \eqref{C}.

         \item  Let $s>2$. Let $\beta\in(0,1)$, $\frac{1}{p}=\frac{\beta}{k}+(1-\beta)$ and $\alpha=|\gamma|\beta+(\beta-1)s$. The following estimates hold
         \begin{align*}
   \|\langle v\rangle^{-\alpha} f\|_{L^1_xL^p_v}\le \|\langle v\rangle^\gamma f\|_{L^1_xL^k_v}^{\beta}M_s(f)^{1-\beta}.
\end{align*}

         \item  Let $0<\varepsilon<\min(1,k-1)$. Let $s=|\gamma|k\big(\frac{k-1}{\varepsilon}-1\big)$. The following estimates hold
         \begin{align*}
   \|f\|_{L^1_xL^{k-\varepsilon}_v}\le \|\langle v\rangle^\gamma f\|_{L^1_xL^k_v}^{\frac{k(k-1-\varepsilon)}{(k-\varepsilon)(k-1)}}M_s(f)^{\frac{\varepsilon}{(k-\varepsilon)(k-1)}}.
\end{align*}

         \item Let $\alpha\in(0,2)$. Let $B^R_{v}=\{|v|\le R\}$. In the case of $d=2$, let $k\in(\frac{2}{2-\alpha},+\infty)$. The following estimate holds
\begin{equation}
\label{ineq-3}
\int_0^T\int_{\R^{2d}\times B^R_{v}\times B^R_{v_*}} ff_* |v-v_*|^{-\alpha}\dd\eta\dd t \lesssim_{\alpha,d}\|f\mathbb{1}_{B^R_v}\|_{L^1_TL^1_xL^k_v}\|f\|_{L^\infty_TL^1_{x,v}} .
\end{equation}

 \item Let $\alpha\in(0,2)$ and  $\varepsilon\in(0,k-\frac{d}{d-\alpha})$. Let $f\in L^1_TL^1_xL^{k-\varepsilon}_v$ and $g\in L^\infty_TL^1_{x,v}$. The following estimate holds
\begin{equation}
\label{ineq-4}
\int_0^T\int_{\G} fg_* |v-v_*|^{-\alpha}\dd\eta\dd t \lesssim_{\alpha,d}(\|f\|_{L^1_TL^1_{x,v}}+\|f\|_{L^1_TL^1_xL^{k-\varepsilon}_v})\|g\|_{L^\infty_TL^1_{x,v}}.
\end{equation}

         \end{enumerate}

        \end{lemma}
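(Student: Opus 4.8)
The six items are largely independent, so I would treat them in order, recycling the output of the earlier ones. For item (1), the plan is to apply the Sobolev embedding in Lemma 2.7(1) to $\varphi=\sqrt{\langle v\rangle^\gamma f}$ (or more carefully to $\sqrt{f}$ weighted by a smooth truncation), obtaining $\|\langle v\rangle^{\gamma/2}\sqrt f\|_{L^{2k}_v}^2\lesssim \int|\nabla_v(\langle v\rangle^{\gamma/2}\sqrt f)|^2\,\dd v$; expanding the gradient produces the weighted Fisher term $\int\langle v\rangle^\gamma|\nabla_v\sqrt f|^2$ plus a lower-order term $\int\langle v\rangle^{\gamma-2}f$ absorbed by $M_0(f)=1$. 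Integrating in $x$ and invoking Lemma 2.6 to bound the weighted Fisher information by $C_0(1+\cD(f))$ gives \eqref{ineq:w:k}; the factor $\langle v\rangle^\gamma$ in front of $f$ on the left (rather than $\langle v\rangle^{\gamma/2}$) comes from $(\langle v\rangle^{\gamma/2})^2$ after squaring, and one rewrites $1+\cD(f)\lesssim\cD(f)$ absorbing the constant (or keeps the $1$; either way the stated form follows up to constants depending on $C_0$). Items (2) and (3) are pure interpolation: for (2) one writes $\langle v\rangle^{-\alpha}f = (\langle v\rangle^\gamma f)^\beta (\langle v\rangle^{?}f)^{1-\beta}$ with the exponent chosen so the $\langle v\rangle$-powers match $-\alpha=|\gamma|\beta+(\beta-1)s$ (recall $\gamma\le 0$ so $\langle v\rangle^\gamma=\langle v\rangle^{-|\gamma|}$), apply Hölder in $v$ with exponents $k/\beta$ and $1/(1-\beta)$ to get $\|\cdot\|_{L^p_v}\le\|\langle v\rangle^\gamma f\|_{L^k_v}^\beta\|\langle v\rangle^s f\|_{L^1_v}^{1-\beta}$ pointwise in $x$, then integrate in $x$; (3) is the special case obtained by choosing $\beta$ so that $-\alpha=0$, i.e. $\beta=s/(s+|\gamma|)$ with $s$ as prescribed, and $p=k-\varepsilon$, then checking the resulting exponents simplify to the displayed powers — this is bookkeeping with $1/(k-\varepsilon)=\beta/k+(1-\beta)$.

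For item (4), the plan is a dyadic/level-set argument on the velocity variables restricted to the ball $B^R_v$. Fix $x,x_*$ and the time; I want to bound $\int_{B^R_v\times B^R_{v_*}} f(x,v)f(x_*,v_*)|v-v_*|^{-\alpha}\,\dd v\,\dd v_*$. Split the $v_*$-integral into the region $|v-v_*|\ge 1$, where $|v-v_*|^{-\alpha}\le 1$ and the integral is $\le \|f(x_*,\cdot)\|_{L^1_v}$, and the region $|v-v_*|<1$, where one applies Hölder in $v_*$: $\int_{|v-v_*|<1}|v-v_*|^{-\alpha}f(x_*,v_*)\,\dd v_*\le \||v-v_*|^{-\alpha}\|_{L^{k'}(B_1)}\|f(x_*,\cdot)\|_{L^k_v}$, and $|z|^{-\alpha}\in L^{k'}_{\loc}$ precisely when $\alpha k'<d$, i.e. $k>\frac{d}{d-\alpha}$ — which for $d\ge3$ holds automatically with $k=\frac{d}{d-2}\ge\frac{d}{d-\alpha}$ since $\alpha<2$, and for $d=2$ is exactly the hypothesis $k>\frac{2}{2-\alpha}$. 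Then integrate the resulting $f(x,v)$ over $v\in B^R_v$ and $x\in\R^d$, and the remaining $\|f(x_*,\cdot)\|_{L^1_v}$ or $\|f(x_*,\cdot)\|_{L^k_v}$ factor over $x_*$, pairing one copy with $\|f\|_{L^\infty_T L^1_{x,v}}$ and the other with $\|f\mathbb 1_{B^R_v}\|_{L^1_T L^1_x L^k_v}$; the time integral is handled by Hölder ($L^1_T$ against $L^\infty_T$).

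Item (5) is the analogue without the ball truncation, where the large-$|v-v_*|$ part is again trivial and the small part uses the local-in-$v$ convolution estimate Lemma 2.7(3) (with $\delta=0$): $\sup_v\|f*_v|v|^{-\alpha}\|_{L^1_x}\lesssim\|f\|_{L^1_{x,v}}+\|f\|_{L^1_xL^p_v}$ for some $p>\frac{d}{d-\alpha}$; here the hypothesis $\varepsilon<k-\frac{d}{d-\alpha}$ guarantees $p:=k-\varepsilon>\frac{d}{d-\alpha}$, so Lemma 2.7(3) applies with that $p$, and pairing with $\|g\|_{L^\infty_T L^1_{x,v}}$ and integrating in time gives \eqref{ineq-4}. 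The main obstacle throughout is item (1): one must be careful about the weight in the Sobolev step — differentiating $\langle v\rangle^{\gamma/2}\sqrt f$ rather than $\sqrt f$ introduces a commutator term $\langle v\rangle^{\gamma-2}f$ which is only controlled because $\gamma\le 0$ and the mass is normalized, and one must pass from the $\cS$-class statement of Lemma 2.6 to general $f$ by a routine density/truncation argument; the interpolation items (2),(3) are then immediate, and (4),(5) reduce to the standard splitting $\{|v-v_*|\lessgtr 1\}$ plus, respectively, local integrability of $|z|^{-\alpha}$ and the already-granted Lemma 2.7(3).
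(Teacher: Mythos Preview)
Your plan is correct and matches the paper's proof essentially point for point: item (1) via Sobolev on $\langle v\rangle^{\gamma/2}\sqrt f$ plus Lemma~\ref{lem:Des}, items (2)--(3) via H\"older interpolation in $v$ then in $x$ (the paper cites \cite[Proposition~6]{Des15} for (2) and writes out the H\"older splitting $f^{k-\varepsilon}=(\langle v\rangle^\gamma f)^{k/q_1}(\langle v\rangle^r f)^{1/q_2}$ for (3)), and items (4)--(5) via local integrability of $|z|^{-\alpha}$ in $L^{k'}$. The only cosmetic differences are that in (4) the paper skips your near/far split (unneeded, since $|v-v_*|\le 2R$ already) and applies H\"older + Young's convolution directly on $B^{2R}$, and in (5) the paper redoes the H\"older + Young step by hand on $\{|v-v_*|\le 1\}$ rather than quoting Lemma~\ref{lem:ineq}(3).
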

       
        \begin{proof}
        \begin{enumerate}[(1)]
            \item The proof follows \cite[Lemma 3]{Des15} with an appropriate modification for the spatial variable $x\in\R^d$.

       By using Sobolev embedding  in Lemma \ref{lem:ineq}, we have 
        \begin{align*}
            \Big(\int_{\R^2}(g\langle v\rangle^{\frac{\gamma}{2}})^{2k}\dd v\Big)^{\frac{1}{2k}}\lesssim_k \Big(\int_{\R^2}|\nabla_v(g\langle v\rangle^{\frac{\gamma}{2}})|^2\dd v\Big)^{\frac{1}{2}}.
        \end{align*}
Notice that  $|\nabla_v \langle v\rangle^{\frac{\gamma}{2}}g|\lesssim |\nabla_v g|\langle v\rangle^{\frac{\gamma}{2}}+|g|$. Then we have
         \begin{align*}
\Big(\int_{\R^2}(g\langle v\rangle^{\frac{\gamma}{2}})^{2k}\dd v\Big)^{\frac{1}{2k}}\lesssim \Big(\int_{\R^2}|\nabla_v g|^{2}\langle v\rangle^{\gamma}\dd v\Big)^{\frac12}+\Big(\int_{\R^2}|g|^2\dd v\Big)^{\frac12}.
        \end{align*}
        Let $g=\sqrt f$. Then we have 
                 \begin{align*}
\Big(\int_{\R^2}f^{k}\langle v\rangle^{k{\gamma}}\dd v\Big)^{\frac{1}{2k}}
           \lesssim \Big(\int_{\R^2}|\nabla_v \sqrt f|^{2}\langle v\rangle^{\gamma}\dd v\Big)^{\frac12}+\Big(\int_{\R^2}f\dd v\Big)^{\frac12}.
        \end{align*}
        We take the square of the both sides of the above inequality and integrate over $x\in \R^d$ to derive
 \begin{align*}
\|f\langle v\rangle^{\gamma}\|_{L^1_xL^k_v}
           \lesssim \|\langle v\rangle^{\gamma}\nabla_v \sqrt f\|_{L^2(\Do)}^{2}+1.
        \end{align*}
        By using Lemma \ref{lem:Des}, the bound \eqref{ineq:w:k} holds.

\item We apply the interpolation results in \cite[Proposition 6]{Des15}
         \begin{align*}
   \|\langle v\rangle^\alpha f\|_{L^p_v}\le \|\langle v\rangle^\gamma f\|_{L^k_v}^{\beta}\|\langle v\rangle^s f\|_{L^1_v}^{1-\beta}.
\end{align*}
By Hölder inequality with exponents $(\beta,1-\beta)$, we have
         \begin{align*}
   \|\langle v\rangle^\alpha f\|_{L^1_xL^p_v}\le \|\langle v\rangle^\gamma f\|_{L^1L^k_v}^{\beta}\|\langle v\rangle^s f\|_{L^1L^1_v}^{1-\beta}.
\end{align*}
        \item  For $q_1,q_2\in(1,+\infty)$ such that $\frac{1}{q_1}+\frac{1}{q_2}=1$, $\frac{k}{q_1}+\frac{1}{q_2}=k-\varepsilon$ and $\gamma\frac{k}{q_1}+\frac{r}{q_2}=0$, we wirte 
    \begin{align*}
        f^{k-\varepsilon}=(\langle v\rangle^\gamma f)^{\frac{k}{q_1}}(\langle v\rangle^rf)^{\frac{1}{q_2}}.
    \end{align*}
We consider $q_1=\frac{k-1}{k-1-\varepsilon}$, $q_2=\frac{k-1}{\varepsilon}$ and $r=-(k-1-\varepsilon)\gamma k/\varepsilon$.

By Hölder inequality with exponents $(q_1,q_2)$, we have 
\begin{align*}
    \|f\|_{L^{k-\varepsilon}_v}\le \|\langle v\rangle^\gamma f\|_{L^k_v}^{\frac{k}{(k-\varepsilon)q_1}}\|\langle v\rangle^r f\|_{L^1_v}^{\frac{1}{(k-\varepsilon)q_2}}.
\end{align*}

Notice that ${\frac{k}{(k-\varepsilon)q_1}}+{\frac{1}{(k-\varepsilon)q_2}}=1$. We integrate over $x\in\R^d$ , and use Hölder inequality to derive
\begin{align*}
    \|f\|_{L^1_xL^{k-\varepsilon}_v} \le \|\langle v\rangle^\gamma f\|_{L^1_xL^k_v}^{\frac{k}{(k-\varepsilon)q_1}}M_r(f)^{\frac{1}{(k-\varepsilon)q_2}}.
\end{align*}

\item We first consider the case of $d\ge 3$ and $k=\frac{d}{d-2}$. By Hölder and Young's convolution inequalities with exponents $\frac{d-2}{d}+\frac{2}{d}=1$, we have
\begin{align*}
&\int_{ B^R_{v}\times B^R_{v_*}} ff_* |v-v_*|^{-\alpha}\dd v_*\dd v\\ 
\lesssim&{}\|f\mathbb{1}_{B^R_v}\|_{L^k_v}\|f(x_*,v)*_v|v|^{-\alpha}\|_{L^{\frac{d}{2}}_v}\\
\lesssim&{}\|f\mathbb{1}_{B^R_v}\|_{L^k_v}\|f_*\|_{L^1_v}\||v|^{-\alpha}\mathbb{1}_{B^{2R}_v}\|_{L^{\frac{d}{2}}}.
\end{align*}
Since $\alpha\in(0,2)$, we have $\||v|^{-\alpha}\|_{L^{\frac{d}{2}}(B^{2R})}<+\infty$. Integrate the above inequality over $(t,x)\in[0,T]\times\R^d$, we obtain the bound \eqref{ineq-3}.  

The case of $k=2$ follows analogously, $k>\frac{2}{2-\alpha}$ ensures that $\alpha k'<2$, where $\frac{1}{k}+\frac{1}{k'}=1$.

\item 
We split the domain $\Do=\{|v-v_*|\le1\}\cup\{|v-v_*|>1\}$. On the domain $\{|v-v_*|>1\}$, we have 
\begin{align*}
&\int_0^T\int_{\Do\times\{|v-v_*|\le1\}} fg_* |v-v_*|^{-\alpha}\dd\eta\dd t
\le \|f\|_{L^1_TL^1_{x,v}}\|g\|_{L^\infty_TL^1_{x,v}}.
\end{align*}

On the domain $\{|v-v_*|\le 1\}$, by Hölder and Young's convolution inequalities with exponents $\frac{1}{k-\varepsilon}+\frac{1}{q}=1$, we have
\begin{align*}
&\int_0^T\int_{\Do\times\{|v-v_*|\le1\}} fg_* |v-v_*|^{-\alpha}\dd\eta\dd t
\lesssim \||v|^{-\alpha}\|_{L^{q}(B^1)}\|f\|_{L^1_TL^1_xL^{k-\varepsilon}_v}\|g\|_{L^\infty_TL^1_{x,v}}.
\end{align*}
Since $\alpha\in(0,2)$ and $\varepsilon\in(0,k-\frac{d}{d-\alpha})$, we have $\||v|^{-\alpha}\|_{L^q(B^1)}<+\infty$. 

Combining the above estimates, we have the estimate \eqref{ineq-4}.

\end{enumerate}

 \end{proof}

\section{Existence results}
\label{sec:existence}
In Section \ref{sec:weak-H}, we introduce the definitions of weak and $\cH$-solutions and discuss the conditions under which they are well-defined. In Section \ref{sec:coe}, we establish a coercivity lemma. In Section \ref{sec:sol}, we prove the existence of $\cH$-solutions.

\subsection{Weak solutions and \texorpdfstring{$\cH$}{TEXT}-solutions}\label{sec:weak-H}

Concerning the solutions to the fuzzy Landau equations \eqref{Landau-fuz}, at least formally, the following mass, momentum and energy conservation laws and the entropy identity hold:
\begin{align*}
&\int_{\Do} (1,v,|v|^2)f_t\dd x\dd v=\int_{\Do} (1,v,|v|^2)f_0\dd x\dd v\\
&\text{and} \quad \cH(f_T)-\cH(f_0)+\int_0^T\cD(f_t)\dd t= 0.
\end{align*}

The fuzzy Landau equation \eqref{Landau-fuz} can be written as
\begin{equation*}
    \label{FL:a}
    \d_tf+v\cdot\nabla_x f=\sum_{i,j=1}^d\d_{v_i}\Big(\int_{\Do}\kappa(x-x_*)a_{ij}(v-v_*)\big(f_*\d_{v_j}f-f(\d_{v_j}f)_*\big)\Big).
\end{equation*}
We recall that the interaction kernel $a(v-v_*)=A(|v-v_*|)\Pi_{(v-v_*)^\perp}$. Then  $a_{ij}$ is given by 
\begin{align*}
 a_{ij}(v-v_*)=\Big(\delta_{ij}-\frac{(v-v_*)_i(v-v_*)_j}{|v-v_*|^2}\Big)A(|v-v_*|).    
\end{align*}
We consider the kernel $A(|v-v_*|)$ satisfying \eqref{A-1}, \eqref{A-2} and \eqref{A-3}.

We define 
\begin{align*}
         b_i(v-v_*)=\sum_{j=1}^d\d_j a_{ij}(v-v_*)\quad\text{and}\quad  c(v-v_*)=\sum_{i,j=1}^d\d^2_{ij} a_{ij}(v-v_*).
     \end{align*}
   We have
   \begin{equation}
   \label{abc}
     \begin{aligned}
        &\operatorname{tr}(a_{ij})=(d-1)A(|v-v_*|),\quad b_i=-(d-1)\frac{(v-v_*)_i}{|v-v_*|^2}A(|v-v_*|),\\
        &\text{and}\quad c=-(d-1)\Big((d-2)\frac{A(|v-v_*|)}{|v-v_*|^2}+\frac{A'(|v-v_*|)}{|v-v_*|^2}\Big).
     \end{aligned}
     \end{equation}

    We define
\begin{equation*}
\label{bar:abc}
         \begin{aligned}
     \bar a_{ij}(f)&=f*_{v}a_{ij}*_{x}\kappa=\int_{\Do}\kappa (x-x_*)a_{ij}(v-v_*)f(x_*,v_*)\dd x_*\dd v_*,\\
     \bar b_i(f)&=f*_{v}b_i*_{x}\kappa =\int_{\Do}\kappa (x-x_*)b_i(v-v_*)f(x_*,v_*)\dd x_*\dd v_*,\\
     \bar c(f)&=f*_{v}c*_{x}\kappa =\int_{\Do}\kappa (x-x_*)c(v-v_*)f(x_*,v_*)\dd x_*\dd v_*.
     \end{aligned}
     \end{equation*}
         We write
$\bar a(f)=(\bar a_{ij}(f))\in \R^{d\times d}$ and $\bar b(f)=(\bar b_i(f))^T\in \R^d$. 
     Notice that
     \begin{align*}
       \sum_{j=1}^d\d_{v_j}\bar a_{ij}= \bar b_i\quad\text{and}\quad \sum_{i=1}^d\d_{v_i} \bar b_i=\bar c.
     \end{align*}
  The fuzzy Landau equation \eqref{Landau-fuz} take the form
  \begin{equation}
    \label{FL:ab}
    \d_tf+v\cdot\nabla_x f=\div_v\big(\bar a\nabla_v f-\bar b f\big)
\end{equation}
and 
\begin{equation*}
    \label{FL:abc}
    \d_tf+v\cdot\nabla_x f=\sum_{i,j=1}^d\bar a_{ij}\d_{v_iv_j}f-\bar c f.
\end{equation*}

We define weak solutions and $\cH$-solutions for the Fuzzy Landau equation in Definition \ref{def:weak} and Definition \ref{def:H} respectively.
\begin{definition}[Weak solutions]
\label{def:weak}
 Let   $f_0\in L^1_{2,2}(\Do)$ and $f_0\in L\log L(\Do)$. We say $f\in C([0,T];\cS'(\Do))$ is a weak solution of the fuzzy Landau equation \eqref{Landau-fuz} if the following identity holds
 \begin{equation}
     \label{FL:weak:ab}
     \begin{aligned}
   &\int_{\Do}f_0\varphi(0)\dd x\dd v+\int_{0}^T\int_{\Do}f(\d_t \varphi+v\cdot\nabla_x\varphi)\dd x\dd v\dd t\\
   =&\frac12\sum_{i,j=1}^d\int_0^T\int_{\G}ff_* \kappa a_{ij}\big(\d_{v_iv_j}\varphi+(\d_{v_iv_j}\varphi)_*\big)\dd\eta\dd t\\
   &+\sum_{i=1}^d\int_0^T\int_{\G}ff_* \kappa b_i\big(\d_{v_i}\varphi-(\d_{v_i}\varphi)_{_*}\big)\dd \eta\dd t
\end{aligned}
 \end{equation}
 for all $\varphi\in C^\infty_c([0,T)\times\Do)$.
\end{definition}
The right-hand side of the weak formulation \eqref{FL:weak:ab} can also be written in terms of $\bar a$ and $\bar b$ as following
 \begin{equation*}
     \label{RHS:bar-ab}
     \begin{aligned}
   &\int_0^T\int_{\Do}f\big(\bar a:\nabla_v^2\varphi+2\bar b\cdot\nabla_v\varphi\big)\dd v\dd v_*\dd t.
\end{aligned}
 \end{equation*}
 We use the notation, for $a_1,a_2\in \R^{d\times d}$, we write $a_1:a_2=\sum_{i,j=1}^d(a_1)_{ij}(a_2)_{ij}$. 

Notice that, in the case of $A=|v-v_*|^{2+\gamma}$ satisfying \eqref{A-1} with $\gamma\in(-\gamma_d,-1)$ ($A=|z|^{2+\gamma}$), the right-hand side of the weak formulation \eqref{FL:weak:ab} does not naturally hold, i.e. we lack of the bounds on 
\begin{equation*}
    \label{pb-1}
    \int_{\G} ff_*|v-v_*|^{1+\gamma}\dd\eta.
\end{equation*}

To overcome such restriction, \textcite{Vil98b} introduced the so-called $\cH$-solution to the homogeneous Landau equation. We will analogously define a $\cH$-solution for the fuzzy Landau equation \eqref{Landau-fuz}.
In Part I \cite{DH25}, the fuzzy Landau gradient $\tilde\nabla$ is defined as
\begin{equation*}
\label{gradient}
    \tilde\nabla f(x,v)= \sqrt{A(v-v_*)}\Pi_{(v-v_*)^\perp}\big(\nabla_vf(x,v)-\nabla_{v_*}f(x_*,v_*)\big).
   \end{equation*}
The corresponding divergence operator $\tilde\nabla\cdot$ is defined via the integration by parts formula 
\begin{equation}
\label{IP}
\int_{\G}\tilde\nabla g\cdot B \dd\eta=-\int_{\intDo} g\tilde\nabla\cdot B\dd v \dd x
\end{equation}
for all 
$B(x,x_*,v,v_*):\G\to\R^d$ and $g:\Do\to\R$. We have
\begin{equation*}
 \tilde \nabla \cdot B=   \nabla_v\cdot\Big(\int_{\R^{2d}}\sqrt{A(v-v_*)}\Pi_{(v-v_*)^\perp}\big(B(x,x_*,v,v_*)-B(x_*,x,v_*,v)\big)\dd x_* \dd v_*\Big).
\end{equation*}

The fuzzy Landau equation \eqref{Landau-fuz} can be written as 
\begin{equation}
\label{FL-H}
(\d_t+v\cdot \nabla_x)f=\frac12 \tilde\nabla \cdot \big( \kappa ff_* \tilde\nabla\log f\big). 
\end{equation}


Based on \eqref{FL-H}, we follow \cite{Vil98b} to define a $\cH$-solution for the fuzzy Landau equation \eqref{Landau-fuz} via the integration by parts formula \eqref{IP}.
\begin{definition}[$\cH$-solution]\label{def:H}
    Let $\gamma\in(-\gamma_d,1]$. Let $f\in C([0,T];L^1(\Do))\cap L^1([0,T];L^1_{2,2+\gamma_+}(\Do))\cap L^\infty([0,T];L\log L(\Do))$. We say $f$ is a $\cH$-solution of \eqref{Landau-fuz} if
    \begin{align*}
        f_t\ge0\quad\text{and}\quad \int_{\Do}f_t(x,v)\dd x\dd v=1\quad\forall\,t\in[0,T];
    \end{align*}
   The entropy dissipation is time-integrable
\begin{align*}
    \int_0^T\cD(f_t)\dd t=\frac12\int_0^T\int_{\G}\kappa ff_*|\tilde\nabla \log f|^2\dd \eta\dd t<+\infty;
\end{align*}
And the following identity holds 
    \begin{equation}
     \label{FL:H:ab}
     \begin{aligned}
&\int_{\Do}f_0\varphi(0)\dd x\dd v+\int_{0}^T\int_{\Do}f(\d_t\varphi+v\cdot\nabla_x\varphi)\dd x\dd v\dd t\\
&=-\frac12\int_0^T\int_{\G}\kappa ff_* \tilde\nabla \varphi\cdot \tilde\nabla\log f\dd\eta\dd t
\end{aligned}
 \end{equation}
 for all $\varphi\in C^\infty_c([0,T)\times\Do)$.
\end{definition}

In the rest of this section, we discuss that the weak and $\cH$ solutions are well-defined and equivalent.  

\begin{remark}\label{rmk-1}
In this remark, without loss of generality, we assume  test functions $\supp(\varphi)\subset \R^d\times B^R_v$, where $B^R_v=\{|v|\le R\}$.
\begin{itemize}
    \item (Weak-solutions in Definition \ref{def:weak} are well-defined).   As a consequence of \eqref{abc}, we have
     \begin{equation*}
     \label{AB}
         |a(v-v_*)|\le A(|v-v_*|)\quad\text{and}\quad    |b(v-v_*)|\lesssim |v-v_*|^{-1}A(|v-v_*|).
     \end{equation*} 
     To show that the weak solutions are well-defined, we only need to verify the boundedness of 
\begin{equation}
\label{bd-1}
    \int_0^T\int_{\R^{2d}\times (B^R)^2}ff_*A(|v-v_*|)\max(1,|v-v_*|^{-1})\dd\eta\dd t.
\end{equation}

Hence, \eqref{bd-1} is bounded under appropriated moments assumptions (for example $M_{2+\gamma_+}(f)\in L^1([0,T])$), when $A$ satisfies \eqref{A-1} with $\gamma\in[-1,1]$, and  when $A$ satisfies \eqref{A-3} with $\gamma\in(-\infty,1]$. 

In the case of $A$ satisfying \eqref{A-1} with $\gamma\in(-\min(3,d),-1)$, and in the case of $A$ satisfying \eqref{A-2} with $\gamma\in(-\infty,1]$, if we further assume that the entropy dissipation $\cD(f) \in L^1([0,T])$, then the boundedness of \eqref{bd-1} is ensured by Lemma \ref{lem:sobo}-(3), since $0<1,|1+\gamma|<2$.

In the case of $A$ satisfying \eqref{A-1} with $\gamma\in(-\gamma_d,-3]$, the boundedness of \eqref{bd-1} can be obtained by assuming $f\in L^1_xL^p_v$ for some $p>\frac{d}{d-|\gamma+1|}$, and applying Lemma \ref{lem:ineq}-$(3)$.

\item ($\cH$-solutions  in Definition \ref{def:H} are well-defined). By Cauchy--Schwarz inequality, we have 
\begin{align*}
&\Big|\int_0^T\int_{\G}\kappa  ff_*\tilde\nabla \varphi\cdot \tilde\nabla\log f\dd\eta\dd t\Big|\\
    \lesssim &{} \int_0^T\cD(f)\dd t+\int_0^T\int_{\R^{2d}\times (B^R)^2}ff_* A(|v-v_*|)\dd\eta\dd t.
\end{align*}
To show that the $\cH$ solutions are well-defined, we only need to verify the boundedness of 
\begin{equation}
\label{bd-2}
    \int_0^T\int_{\R^{2d}\times (B^R)^2}ff_*A(|v-v_*|)\dd\eta\dd t.
\end{equation}
Similar to the weak solution case, in the case of $A$ satisfying \eqref{A-1} with $\gamma\in[-2,1]$, in the case of $A$ satisfying \eqref{A-2} or \eqref{A-3} with $\gamma\in(-\infty,1]$,   \eqref{bd-2} is bounded once $M_{2+\gamma_+}(f)\in L^1([0,T])$. 

In the case where $A$ satisfies \eqref{A-1} with $\gamma \in (-\gamma_d,-2)$, the boundedness of \eqref{bd-2} follows from Lemma \ref{lem:sobo}-(3). We emphasise that the assumption $\gamma_d < 4$ ensures the condition $|2+\gamma|\in (0,2)$ is fulfilled.

\item (Weak and $\cH$-solutions are equivalent). 
We conclude that the weak and $\cH$ solutions are equivalent under the assumption of $\cD(f)\in L^1([0,T])$ when both of the solutions are well-defined. More precisely, we show the right-hand sides of \eqref{FL:weak:ab} and \eqref{FL:H:ab} are equivalent. Let $f$ be an $\cH$-solutions satisfying \eqref{FL:H:ab}. We approximate $f$ by a regularised $f^\beta$ such that $f^\beta\to f$ in $L^1$ as $\beta\to0$. By integration by parts, we obtain the right-hand sides of \eqref{FL:weak:ab} in terms of $f^\beta$, then we pass to the limit by letting $n\to\infty$, and vice versa. The proof for homogeneous Landau equations can be found in \cite{Vil98}, and for the fuzzy case, it is similar to the argument used to show \eqref{proof-id} below.
\end{itemize}
\end{remark}

\begin{remark}[A difference between homogeneous and fuzzy Landau equations]\label{rmk:homo} The $\cH$-solutions of homogeneous Landau equations in \cite{Vil98b} are well-defined in a more straightforward way in the case of $\gamma\in(-3,-2)$.

Concerning the test functions $\varphi=\varphi(v)\in C^\infty_c(\R^d)$, we have
\begin{equation*}
    \tilde\nabla_{\sf homo} \varphi(v)= |v-v_*|^{1+\frac{\gamma}{2}}\Pi_{(v-v_*)^\perp}\big(\nabla_v\varphi(v)-\nabla_{v_*}\varphi(v_*)\big).
\end{equation*}
Since $|\nabla_v\varphi-\nabla_{v_*}\varphi_*|\le|v-v_*|\|\varphi\|_{W^{2,\infty}}$, we have
\begin{align*}
|v-v_*|^{1+\frac{\gamma}{2}}|\tilde\nabla_{\sf homo}\varphi|&\le |v-v_*|^{2+\gamma}|\nabla_v \varphi(v)-\nabla_{v_*}\varphi(v_*)|\\
&\le |v-v_*|^{3+\gamma}\|\varphi\|_{W^{2,\infty}},
\end{align*}
where $3+\gamma>0$.
However, this is not the case for fuzzy Landau equations, since we cannot control $|\nabla_v\varphi(x,v)-\nabla_{v_*}\varphi(x_*,v_*)|$ by $|v-v_*|$. 

\end{remark}

\subsection{Coercivity}\label{sec:coe}
In this section, we follow \cite{ALL15} to show the coercivity of $\bar a(x,v)$.
We recall the definition of $a$ and  $\bar a\in\R^{d\times d}$ 
\begin{align*}
  a_{ij}(z)=\Big(\delta_{ij}-\frac{z_iz_j}{|z|^2}\Big)|z|^{2+\gamma}\quad\text{and}\quad   \bar a_{ij}(x,v)=f*_va_{ij}*_x\kappa.
\end{align*}

Let $f$ be a weak solution of the fuzzy Landau equation \eqref{Landau-fuz} such that  
\begin{align*}
   \|f\|_{L^1(\Do)}=1,\quad E(f)\le E_0\quad\text{and}\quad \cH(f)\le H_0.
\end{align*}


The integrability of $f$ in $t$ ensures that: for $R_1,\,R_2>0$ large enough, we have 
\begin{equation}
\label{coe:B:1-2}
    \int_{B_1\times B_2} f\dd x\dd v\ge \frac{1}{2}.
\end{equation}
where we denote $B_1=\{x\mid |x|\le R_1\}$ and $B_2=\{v\mid |v|\le R_2\}$. In the case that $\Omega=\T^d$, the same properties hold, we skip the cut-off in $x$, i.e.  $B_1=\T^d$.
On the other hand, there exists $\delta>0$ such that for any $O\subset \Do$ and $|O|\le\delta$, we have
\begin{equation}
\label{coe:delta}
    \int_{O} f\dd x\dd v\le \frac{1}{8}.
\end{equation}

We have the following coercivity lemma for
\begin{align*}
    \bar a=f*_x\kappa*_va.
\end{align*}
\begin{lemma}
\label{lem:coer}
Let $A$ satisfy \eqref{A-1} with, or \eqref{A-2} or \eqref{A-3}  with $\gamma\in(-\infty,1]$. Let $f\in L^1_{2,2}\cap L\log L(\Do)$ such that
\begin{align*}
   \|f\|_{L^1(\Do)}=1,\quad E(f)\le E_0\quad\text{and}\quad \cH(f)\le H_0.
\end{align*}
\begin{itemize}
    \item Let 
    \begin{equation}
        \label{coe:ass-1}
        \kappa(x)=k_1\exp(-k_2\langle x\rangle)
    \end{equation}
     for some constants $k_1,\,k_2>0$ such that $\|\kappa\|_{L^1(\R^d)}=1$. We have for all $\xi\in \R^d$
\begin{equation}
\label{coe-1}
  \xi^T\bar a(x,v)\xi\ge C_{coe} \langle v\rangle^\gamma|\xi|^2\exp(-C\langle x\rangle),
\end{equation}
where $C_{coe}=C(\gamma,E_0,H_0,k_1,k_2)$.

    \item Let 
    \begin{equation}
        \label{coe:ass-2}
        0<C_\kappa^{-1}\le\kappa
    \end{equation}
    for some constant $C_\kappa>0$. We have for all $\xi\in \R^d$
\begin{equation}
\label{coe-2}
  \xi^T\bar a(x,v)\xi\ge C_{coe} \langle v\rangle^\gamma|\xi|^2,
\end{equation}
where $C_{coe}=C(\gamma,E_0,H_0,C_\kappa)$.

\end{itemize}

\end{lemma}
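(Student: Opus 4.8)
The plan is to follow the classical argument of Desvillettes--Villani and Alexandre--Liao--Lin (\cite{ALL15}) that lower-bounds the diffusion matrix $\bar a$ by its smallest eigenvalue, obtained by testing against the worst-case direction. Fix $(x,v)\in\intDo$ and a unit vector $\xi\in\R^d$. By definition,
\begin{equation*}
\xi^T\bar a(x,v)\xi=\int_{\intDo}\kappa(x-x_*)A(|v-v_*|)\,\bigl|\Pi_{(v-v_*)^\perp}\xi\bigr|^2 f(x_*,v_*)\dd x_*\dd v_*,
\end{equation*}
and since $|\Pi_{(v-v_*)^\perp}\xi|^2=|\xi|^2-\bigl(\xi\cdot\tfrac{v-v_*}{|v-v_*|}\bigr)^2=\sin^2\theta$, the integrand vanishes only when $v-v_*$ is parallel to $\xi$. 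The strategy is to restrict the integral to a set of $(x_*,v_*)$ that simultaneously (i) carries a definite amount of mass of $f$, (ii) keeps $v_*$ in a ball so that $A(|v-v_*|)\gtrsim\langle v\rangle^{2+\gamma}\gtrsim\langle v\rangle^\gamma\cdot$(distance$)^2$ is controlled from below by $\langle v\rangle^\gamma$ after accounting for the projection factor, and (iii) stays in a cone around $v$ bounded away from the direction $\xi$ so that $\sin^2\theta$ is bounded below by a fixed constant. The entropy bound $\cH(f)\le H_0$ is what prevents the mass from concentrating on thin slabs near the degenerate direction: by \eqref{coe:delta}, any set $O$ of Lebesgue measure $\le\delta$ carries at most $\tfrac18$ of the mass, so removing the slab $\{v_*:|(\xi\cdot(v-v_*))|\le\rho|v-v_*|\}\cap(B_1\times B_2)$ (whose measure is $O(\rho)$ times the volume of $B_1\times B_2$, hence $\le\delta$ for $\rho$ small depending only on $E_0,H_0$ through $R_1,R_2$) still leaves mass $\ge\tfrac12-\tfrac18=\tfrac38$ by \eqref{coe:B:1-2}. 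On that remaining set $\sin^2\theta\ge\rho^2$.

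The key steps, in order: First, choose $R_1,R_2$ from \eqref{coe:B:1-2} and $\delta$ from \eqref{coe:delta}, both depending only on $\|f\|_{L^1}=1$, $E_0$, $H_0$. Second, split into the regime $|v|\le 2R_2$ and $|v|>2R_2$. In the bounded regime, restrict the $v_*$-integral to $B_2$; then $|v-v_*|\le 3R_2$, so for kernel \eqref{A-1} we have $A(|v-v_*|)=|v-v_*|^{2+\gamma}$, which combined with the projection gives $A|\Pi\xi|^2=|v-v_*|^\gamma\,|(v-v_*)\times\xi|^2$-type quantities; one uses that on the good cone $|v-v_*|\ge$ (some distance) is false in general, so instead one keeps $|v-v_*|^{2+\gamma}\sin^2\theta$ and bounds $|v-v_*|^{2+\gamma}\ge$ a constant on an annulus $\{c_0\le|v-v_*|\le 3R_2\}\subset B_2$, whose mass is again $\ge\tfrac38-\tfrac18$ after also excising the small ball $\{|v-v_*|<c_0\}$. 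In the unbounded regime $|v|>2R_2$, for $v_*\in B_2$ we get $|v-v_*|\ge|v|/2\ge R_2$, so $A(|v-v_*|)\ge c|v|^{2+\gamma}\ge c\langle v\rangle^\gamma\cdot|v-v_*|^2/C$ — more directly, since $|v-v_*|\sim|v|\sim\langle v\rangle$ on this set, $A(|v-v_*|)\gtrsim\langle v\rangle^{2+\gamma}\gtrsim\langle v\rangle^\gamma$ when $\gamma\le 0$ can fail; the correct bookkeeping is $A(|v-v_*|)\,\sin^2\theta\gtrsim\langle v\rangle^{2+\gamma}\rho^2\gtrsim\langle v\rangle^\gamma$ only if $\gamma\ge-2$, so for $\gamma<-2$ one instead exploits that $\langle v\rangle^{2+\gamma}\ge\langle v\rangle^\gamma$ is false and one must measure the cone angle relative to $|v|$: the slab one removes has $v_*$-measure $\sim\rho|v|^{d-1}\cdot|v|=\rho|v|^d$ which is \emph{not} $\le\delta$ uniformly, so instead one rescales $\xi$-slab thickness to $\rho$ (absolute), giving angle $\theta\gtrsim\rho/|v|$ and $\sin^2\theta\gtrsim\rho^2/|v|^2$, whence $A\sin^2\theta\gtrsim|v|^{2+\gamma}\rho^2/|v|^2=\rho^2|v|^\gamma\sim\langle v\rangle^\gamma$. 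Third, handle the $\kappa$-factor: under \eqref{coe:ass-2}, $\kappa\ge C_\kappa^{-1}$ uniformly, so it just contributes a constant, giving \eqref{coe-2}; under \eqref{coe:ass-1}, $\kappa(x-x_*)=k_1\exp(-k_2\langle x-x_*\rangle)\ge k_1\exp(-k_2(\langle x\rangle+\langle x_*\rangle))\ge k_1 e^{-k_2\langle R_1\rangle}\exp(-k_2\langle x\rangle)$ for $x_*\in B_1$, which produces the $\exp(-C\langle x\rangle)$ factor in \eqref{coe-1}. Fourth, assemble: on the good set the integrand is $\ge C_\kappa^{-1}$ (or $\ge c e^{-C\langle x\rangle}$) times $\langle v\rangle^\gamma$ times $f(x_*,v_*)$, and the $f$-mass over the good set is $\ge\tfrac18$ (say), yielding the claimed bound with $C_{coe}$ depending only on $d,\gamma,E_0,H_0$ and the $\kappa$-parameters. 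For kernels \eqref{A-2}, \eqref{A-3} the lower bound $A(|v-v_*|)\ge C^{-1}\langle v-v_*\rangle^{2+\gamma}$ replaces the exact power and the same argument goes through verbatim (in fact more easily, since there is no singularity at $v=v_*$).

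The main obstacle is the careful choice of the geometric "good set" uniformly in $v$ across the two regimes, and in particular getting the $\langle v\rangle^\gamma$ (rather than $\langle v\rangle^{2+\gamma}$ or $\langle v\rangle^{-2}$ or worse) scaling right for very soft potentials $\gamma<-2$: the trade-off between the thickness of the excised degenerate slab (which must have measure $\le\delta$, fixed) and the resulting lower bound on $\sin^2\theta$ (which then scales like $|v|^{-2}$ at large $|v|$) has to be balanced so that the product $A(|v-v_*|)\sin^2\theta$ is bounded below by $c\langle v\rangle^\gamma$ with $c$ independent of $v$. Once this geometric lemma is set up, the rest is bookkeeping. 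A secondary point to be careful about is that the entropy bound enters only through \eqref{coe:delta} (equi-integrability of $f$), so one must make sure every "small set" excised in the construction genuinely has Lebesgue measure $\le\delta$; this forces all the geometric parameters ($c_0$, slab thickness, annulus radii) to be chosen depending on $R_1,R_2,\delta$ and hence ultimately only on $E_0,H_0$.
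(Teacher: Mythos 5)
Your proposal follows the same Alexandre--Liao--Lin route as the paper: use the entropy bound to obtain equi-integrability of $f$, restrict the defining integral of $\xi^T\bar a\xi$ to $B_1\times B_2$ minus a thin degenerate set, and bound the kernel together with the projection factor from below on the remainder; the split into $|v|\le 2R_2$ and $|v|>2R_2$, the additional small ball excised around $v$ in the bounded regime, and the treatment of the two $\kappa$-hypotheses also match the paper's argument. Two points deserve comment.

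First, your assertion that $\langle v\rangle^{2+\gamma}\gtrsim\langle v\rangle^\gamma$ ``can fail'' for $\gamma<-2$ is simply incorrect: since $\langle v\rangle\ge1$, one always has $\langle v\rangle^{2+\gamma}=\langle v\rangle^{2}\langle v\rangle^{\gamma}\ge\langle v\rangle^{\gamma}$, and your measure count $\rho|v|^d$ for the excised slab is also off (a cylinder of radius $\rho$ about the line through $v$ in direction $\xi$ meets the fixed ball $B_2$ in a set of measure $\lesssim\rho^{d-1}R_2$, uniformly in $v$); so the very-soft case poses no special difficulty at this point, and the motivation you give for switching to absolute slab thickness is spurious. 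Second, and despite that confusion, the fixed-radius-cylinder device you land on is in fact the correct resolution of a genuine subtlety that the paper leaves implicit: when $\xi$ is nearly parallel to $v/|v|$ and $|v|$ is large, the fixed-angle cone $\mathcal{C}_{\xi,\theta,v}$ with apex at $v$ can swallow all of $B_2$ (the angular size of $B_2$ seen from $v$ is $\lesssim R_2/|v|$), so $|B_1\times(B_2\cap\mathcal{C}_{\xi,\theta,v})|\le\delta$ cannot be arranged with a $\theta$ independent of $v$; one must take $\theta\sim 1/|v|$, producing a hidden factor $\sin^2\theta\sim|v|^{-2}$, and this loss is absorbed precisely by the sharp kernel bound $A(|v-v_*|)\gtrsim\langle v\rangle^{2+\gamma}$ on $B_2$ when $|v|>2R_2$. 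The paper instead records the lossy estimate $A\gtrsim\langle v\rangle^\gamma$ and buries the $\sin^2\theta$ factor in the implicit constant of ``$\gtrsim_\theta$'', which obscures exactly this compensation; your fixed-radius version makes it transparent and is, in this respect, a cleaner formulation of the same proof.
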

\begin{proof}
The proof follows \cite{ALL15} with an appropriate modification for the spatial variable $x\in\R^d$.
    We fix $R_1,\,R_2$ and $\delta>0$ such that \eqref{coe:B:1-2} and \eqref{coe:delta} hold
    \begin{equation*}
    \int_{B_1\times B_2} f\dd x\dd v\ge \frac{1}{2}\quad\text{and}\quad
    \int_{O} f\dd x\dd v\le \frac{1}{8}.
\end{equation*}
For a fixed $\theta\in(0,\frac{\pi}{2})$ and $|\xi|=1$, we define the following cone centred at $v\in\R^d$
    \begin{align*}
        \mathcal{C}_{\xi,\theta,v}\defeq\Big\{v_*\in\R^d \Bigm\vert \Big|\frac{v-v_*}{|v-v_*|}\cdot \xi\Big|\ge \cos\theta\Big\}.
    \end{align*}
    For all $\xi\in\R^d\setminus\{0\}$, we write $\hat \xi=\frac{\xi}{|\xi|}$ and $v_*\in \mathcal{C}_{\hat\xi,\theta,v}^c=\R^d\setminus\mathcal{C}_{\hat\xi,\theta,v}$.
    One the one hand, we choose $\theta$ small such that 
    \begin{align*}
        \big|B_1\times \big(B_2\cap \mathcal{C}_{\xi,\hat\theta,v}\big)\big|\le\delta.
    \end{align*}
    On the other hand, we have 
    \begin{align*}
    &\xi^Ta(v-v_*)\xi\\
    =&{}A(|v-v_*|)\sum_{i,j=1}^d\Big(\delta_{ij}-\frac{(v-v_*)_i(v-v_*)_j}{|v-v_*|^2}\Big)\xi_i\xi_j\\
=&{}A(|v-v_*|)\Big(|\xi|^2-\frac{|(v-v_*)\cdot \xi|^2}{|v-v_*|^2}\Big)\\
    =&{}A(|v-v_*|)\Big(1-\frac{|(v-v_*)\cdot \hat\xi|^2}{|v-v_*|^2}\Big)|\xi|^2\\
    \ge&{} A(|v-v_*|)\sin^2(\theta)|\xi|^2.
    \end{align*}
We define $D=B_2\cap \mathcal{C}_{\hat\xi,\theta,v}^c$. Hence, we have
\begin{align*}
    \xi^T\bar a(x,v)\xi
    &\gtrsim_\theta |\xi|^2\int_{B_1\times D}A(|v-v_*|)f(x_*,v_*)\kappa(x-x_*)\dd x_*\dd v_*.
\end{align*}


By assumption, we have
\begin{align*}
\int_{B_1\times D}  f\dd x\dd v\ge \frac{3}{8}.  
\end{align*}
We define
\begin{align*}
    \hf(x,v)=f*_x\kappa=\int_{B_1}f(x_*,v)\kappa(x-x_*)\dd x_*.
\end{align*}
Under assumption \eqref{coe:ass-1}, the following lower bounds hold for all $x\in\R^d$
\begin{equation}
\label{hF:lbd}
\int_{D}\hf(x,v)\dd v\ge   C \exp(-C\langle x\rangle)
\end{equation}
for some constant $C>0$. A detailed proof for the first bound can be found in, for example, \cite{CC92}. 

Under the assumption \eqref{coe:ass-2}, we have
\begin{align*}
\int_{B_1\times D}  f(x_*,v)\kappa(x-x_*)\dd x_*\dd v\ge\frac{3}{8  C_\kappa}\quad\forall x\in\R^d.  
\end{align*}

In the following, we only show the case of $\kappa$ satisfying \eqref{coe:ass-1}, and the case of \eqref{coe:ass-2} follows analogously. 
We only need to show that , for  all $(v,v_*)\in\R^d\times B_2$,
\begin{align*}
   &\int_{D}A(|v-v_*|)\hf(x,v_*)\dd v_*\ge   C_{coe}\langle v\rangle^\gamma \exp(-C\langle x\rangle).
\end{align*}
We split  $\R^d=B_{2R_2}\cap B_{2R_2}^c$, where $B_{2R_2}=\{|v|\le 2R_2\}$.
\begin{itemize}
    \item {\bf The case of $|v|> 2R_2$.} We only need to show that
    \begin{equation*}
   A(|v-v_*|)\ge C_{coe}\langle v\rangle^ \gamma>0\quad\forall (v,v_*)\in B_{2R_2}^c\times B_2,
\end{equation*}
By definition of $D$, we have $|v_*|\le R_2$, which implies that
    \begin{align*}
     \frac{|v|}{2}  \le |v-v_*|\le \frac{3|v|}{2}.
    \end{align*}
    We first consider $A=|v-v_*|^{2+\gamma}$ satisfying \eqref{A-1}. For $\gamma\in[-2,1]$, $|v|\ge 2R_2$ ensures $|v|^{2+\gamma}\ge |v|^\gamma\ge\frac12\langle v\rangle^\gamma$.
    As a consequence, we have 
    \begin{align*}
    &|v-v_*|^{2+\gamma}\ge2^{-(2+\gamma)}|v|^{2+\gamma}
    \gtrsim\langle v\rangle ^{\gamma}.
\end{align*}
For $\gamma\in(-\gamma_d,-2)$, we have 
\begin{align*}
|v-v_*|^{2+\gamma}\ge \big(\frac{3|v|}{2}\big)^{2+\gamma}\ge \big(\frac{3}{2}\big)^{2+\gamma}\langle v\rangle^\gamma,
\end{align*}
where, without loss of generality, we assume $R_2\ge 1$.

In the case of $A$ satisfying \eqref{A-2} and \eqref{A-3} with $\mu\in(-\infty,1]$, similarly, we have 
 \begin{align*}
    &A(|v-v_*|)\gtrsim|v|^2\langle v\rangle^\gamma
    \gtrsim\langle v\rangle ^{\gamma}.
\end{align*}

     \item {\bf The case of $|v|\le 2R_2$.}
We have 
      \begin{align*}
   A(|v-v_*|)
    \ge  \frac{1}{(3R_2)^2}|v-v_*|^{2}A(|v-v_*|)\quad \forall v_*\in B_2
\end{align*}
since the assumption $|v_*|\le R_2$ ensures the boundedness of $|v-v_*|\le 3R_2$.

Choosing $r>0$ small such that $|B_1\times B_{r}(v)|\le\delta$, where $ B_{r}(v)\subset\R^d$ denotes the ball centred at $v$ with radius $r$. Since $\gamma+4\ge0$, we have 
 \begin{align*}
   &\int_{D}A(|v-v_*|)|v-v_*|^{2}\hf(x,v_*)\dd v_*\\
   \ge&{} \int_{D\cap B_r^c}A(|v-v_*|)|v-v_*|^{2}\hf(x,v_*)\dd v_*\\
   \gtrsim&{} r^{4+\gamma} \int_{D\cap B_r^c}\hf(x,v_*)\dd v_*\gtrsim r^{4+\gamma}\exp(-C\langle x\rangle),
\end{align*}
where, similar to \eqref{coe:delta}, we choose $r$ small such that 
\begin{align*}
    \int_{B_1\times(D\cap B_r^c)}f(x,v)\dd x\dd v\ge \frac14.
\end{align*}
and repeat the argument for \eqref{hF:lbd}.
Since $|v|\le 2R_2$ is bounded, we have $\langle v\rangle^\gamma  \le 8R_2$. Hence, we have 
\begin{align*}
   \int_{D}A(|v-v_*|)|v-v_*|^{2}\hf(x,v_*)\dd x_*\dd v_*\gtrsim \exp(-C\langle x\rangle)\langle v\rangle^\gamma.
\end{align*}
\end{itemize}
\end{proof}

\subsection{Existence of  \texorpdfstring{$\cH$}{TEXT}-solutions}\label{sec:sol}

In Remark \ref{rmk-1}, we discuss the equivalence of weak and $\cH$ solutions. In this subsection, we show the existence of $\cH$ solutions. 
\begin{theorem}\label{thm:exist}
 Let kernel $A$ satisfy \eqref{A-1} with $\gamma\in(-\gamma_d,1]$, \eqref{A-2} or \eqref{A-2} with $\gamma\in(-\infty,1]$. Let $f_0\in L^1_{2,2+\gamma_+}\cap L\log L(\Do;\R_+)$. There exists an $\cH$-solution of the fuzzy Landau equation \eqref{Landau-fuz} with initial value $f_0$. Moreover, the mass and momentum conservation laws hold
 \begin{equation*}
     \int_{\Do}(1,v)f_t\dd x\dd v=\int_{\Do}(1,v)f_0\dd x\dd v\quad\forall t\in[0,T].
 \end{equation*}
The following energy, spatial second moment and entropy inequalities hold for all $t\in[0,T]$
\begin{equation}
\label{ineq:thm}
    \begin{aligned}
&\int_{\Do}|v|^2f_t\dd x\dd v\le\int_{\Do}|v|^2f_0\dd x\dd v,\\
&\int_{\Do}|x|^2f_t\dd x\dd v\le C(T)\int_{\Do}(|x|^2+|v|^2)f_0\dd x\dd v\\
&\cH(f_t)-\cH(f_0)+\int_0^t\cD(f_s)\dd s\le0.
    \end{aligned}
        \end{equation}
  
\end{theorem}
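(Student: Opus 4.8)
The plan is to construct the $\cH$-solution by a regularisation/approximation scheme, obtain uniform a priori bounds from the physical estimates (mass, momentum, energy, entropy, entropy dissipation), and pass to the limit using the weak formulation \eqref{FL:H:ab}. Concretely, I would first regularise the problem: mollify the initial datum so that $f_0^\varepsilon$ is smooth, bounded, decaying, and satisfies $f_0^\varepsilon \to f_0$ in $L^1_{2,2+\gamma_+}$ and in $L\log L$; simultaneously replace the singular kernel by a truncated/regularised version $A_\varepsilon$ (e.g. $A_\varepsilon = \chi_\varepsilon \ast A$ or $A_\varepsilon = \min(A, \varepsilon^{-1})$ away from the singularity, combined with a cut-off of large velocities) so that the coefficients $\bar a_\varepsilon, \bar b_\varepsilon, \bar c_\varepsilon$ are smooth and bounded. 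For each $\varepsilon>0$, existence of a smooth nonnegative solution $f^\varepsilon$ to the regularised equation \eqref{FL:ab} follows from standard linear parabolic theory combined with a fixed-point argument (the transport term is handled along characteristics, and the nondegeneracy of $\bar a_\varepsilon$ — using the coercivity Lemma \ref{lem:coer} — gives parabolic smoothing in $v$); nonnegativity is preserved by the maximum principle since the equation is in double-divergence form with the drift term controlled.

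Next I would derive the uniform-in-$\varepsilon$ a priori estimates. Testing the regularised equation against $1$, $v$, $|v|^2$, $|x|^2$ and $\log f^\varepsilon$ and using the weak antisymmetrisation of the collision operator (as in the weak formulation \eqref{FL:weak:ab}), I obtain: conservation of mass and momentum; the energy inequality $\int |v|^2 f^\varepsilon_t \le \int |v|^2 f_0^\varepsilon$ (equality, in fact, for the regularised flow, but one needs to check the truncation does not produce energy growth — the projection $\Pi_{(v-v_*)^\perp}$ annihilates the energy production term so this survives truncation); the spatial second moment bound $\int |x|^2 f^\varepsilon_t \le C(T)\int (|x|^2+|v|^2) f_0^\varepsilon$, which comes from $\frac{d}{dt}\int |x|^2 f^\varepsilon = 2\int x\cdot v f^\varepsilon \le \int (|x|^2+|v|^2) f^\varepsilon$ followed by Grönwall together with the energy bound; and the entropy dissipation inequality $\cH(f^\varepsilon_t) + \int_0^t \cD_\varepsilon(f^\varepsilon_s)\,ds \le \cH(f_0^\varepsilon)$. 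Since $\|f_0^\varepsilon\|_{L^1_{2,2+\gamma_+}}$ and $\cH(f_0^\varepsilon)$ are uniformly bounded, Remark \ref{H:bdd} (with the $L^1_{2,2}$ bound, which is implied) gives a uniform $L\log L$ bound, hence uniform integrability of $\{f^\varepsilon_t\}$; the velocity moment of order $2+\gamma_+$ propagates as in the moment lemmas referenced in the introduction (or one simply assumes the truncation respects it). These bounds give weak-$L^1$ compactness of $f^\varepsilon$ in $t,x,v$, and — crucially — the uniform bound on $\int_0^T \cD_\varepsilon(f^\varepsilon)\,dt$ gives, via Lemma \ref{lem:Des} and Lemma \ref{lem:sobo}, a uniform bound on $\langle v\rangle^{\gamma/2}\nabla_v\sqrt{f^\varepsilon}$ in $L^2_tL^2_{x,v,\loc}$ and on weighted $L^p_v L^1_x$ norms, which is exactly what is needed to control the singular terms $ff_*|v-v_*|^{-\alpha}$ with $\alpha\in(0,2)$ appearing in the $\cH$-formulation.

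The passage to the limit is where the work concentrates. Using the uniform $\langle v\rangle^{\gamma}|\nabla_v\sqrt{f^\varepsilon}|^2$ bound and an Aubin–Lions / velocity-averaging argument (the equation gives $\partial_t f^\varepsilon + v\cdot\nabla_x f^\varepsilon$ bounded in a negative Sobolev space, and $\sqrt{f^\varepsilon}$ is bounded in $L^2_t H^1_{v,\loc}$), I extract a subsequence with $f^\varepsilon \to f$ strongly in $L^1_{t,x,v,\loc}$ and $\sqrt{f^\varepsilon}\to\sqrt f$ with $\sqrt{f^\varepsilon}\rightharpoonup \sqrt f$ weakly in $L^2_t H^1_{v,\loc}$ (weighted). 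Then in the weak formulation \eqref{FL:H:ab} tested against $\varphi\in C^\infty_c$, the left-hand side passes to the limit by strong $L^1_\loc$ convergence and the moment bounds; for the right-hand side I write $\kappa f^\varepsilon f^\varepsilon_* \tilde\nabla\varphi\cdot\tilde\nabla\log f^\varepsilon = \kappa f^\varepsilon_* \tilde\nabla\varphi \cdot \sqrt{A_\varepsilon}\Pi(\ldots)\, 2\sqrt{f^\varepsilon}\,\nabla_v\sqrt{f^\varepsilon}\cdot(\text{symmetrised})$, i.e. express it bilinearly as (something strongly convergent in $L^2$) times ($\sqrt{A}\Pi\nabla\sqrt{f^\varepsilon}$ weakly convergent in $L^2$); the key point, following Villani \cite{Vil98b}, is that $\sqrt{f^\varepsilon}f^\varepsilon_*\sqrt{A_\varepsilon}\Pi\nabla_v\varphi$ converges strongly in $L^2(d\eta\,dt)$ — this uses the strong $L^1_\loc$ convergence of $f^\varepsilon$, the moment bounds to handle the tails in $v_*$, the estimates \eqref{ineq-3}–\eqref{ineq-4} of Lemma \ref{lem:sobo} to control the singular weight $\sqrt{A}\sim|v-v_*|^{1+\gamma/2}$ when $\gamma<0$, and a dominated convergence argument on the compact $v$-support of $\varphi$. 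The main obstacle is precisely this strong-times-weak product identification in the very soft potential range $\gamma\in(-\gamma_d,-2)$, where $\sqrt{A}$ and its derivatives are genuinely singular: one must carefully split $\{|v-v_*|\le 1\}$ (handled by the $L^1_x L^{k-\varepsilon}_v$ bounds from entropy dissipation, Lemma \ref{lem:sobo}(3)–(5), since $|1+\gamma/2|<2$ when $\gamma>-4$) from $\{|v-v_*|>1\}$ (handled by moments), and verify that no mass escapes to $v_*=\infty$ using the uniform $M_{2+\gamma_+}$ bound. Finally, the three inequalities in \eqref{ineq:thm} survive the limit by weak lower semicontinuity: the energy and spatial-moment bounds pass by Fatou, and the entropy inequality passes because $\cH$ is weakly lower semicontinuous in $L^1$ and $\cD$ is a convex functional of $(\sqrt{f},\sqrt{f}\,\tilde\nabla\log f) = (\sqrt f, 2\nabla\sqrt f\text{-type})$, hence weakly lower semicontinuous along the approximating sequence, giving $\cH(f_t) + \int_0^t\cD(f_s)\,ds \le \liminf(\cH(f^\varepsilon_t)+\int_0^t\cD_\varepsilon(f^\varepsilon_s)\,ds)\le \cH(f_0)$.
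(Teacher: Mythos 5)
Your outline shares the paper's skeleton (mollified data, truncated kernel, fixed-point existence, uniform mass/momentum/energy/entropy bounds, weak lower semicontinuity at the end), but it departs from the paper's proof in two places, one of which is a genuine gap.

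First, a technical point on the approximate system: the paper adds a vanishing viscosity $\tfrac{1}{n}\Delta_v f^n$ to \eqref{FL:app} precisely because Lemma~\ref{lem:coer} is \emph{not} enough for uniform ellipticity — its coercivity constant degenerates both as $|v|\to\infty$ (the $\langle v\rangle^\gamma$ factor) and, in the vanishing-$\kappa$ case \eqref{coe-1}, as $|x|\to\infty$. You invoke the coercivity lemma directly for "parabolic smoothing in $v$", which is insufficient to close the fixed-point step as stated; you need a genuinely nondegenerate regularisation of the diffusion matrix.

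Second, and more substantively, the limit passage. The paper deliberately passes to the limit in the weak formulation \eqref{weak:app-1}, whose right-hand side is \emph{linear} in the product $f^n f^n_*$ against the fixed kernel $A_n\kappa\Phi$; weak $L^1$ convergence (Dunford--Pettis, plus a refined Ascoli--Arzelà in the case $\gamma\in[-2,1]$) together with the $L^1_tL^1_xL^k_v$ bound from Lemma~\ref{lem:sobo}-(1) controls the $|v-v_*|$-singularity, without any strong convergence of $f^n$. The $\cH$-formulation \eqref{FL:H:ab} is then recovered afterwards, by mollifying the limit $f$ as $f^\beta=(\sqrt{f}*_v\eta^\beta)^2$ and integrating by parts. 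Your route, by contrast, tries to pass to the limit directly in \eqref{FL:H:ab} via a strong-times-weak pairing, which forces you to establish strong $L^1_{\loc}$ convergence of $f^\varepsilon$. The justification you give — "Aubin--Lions / velocity averaging, with $\sqrt{f^\varepsilon}\in L^2_tH^1_{v,\loc}$ and $\partial_t f^\varepsilon + v\cdot\nabla_x f^\varepsilon$ in a negative space" — does not close as stated: Aubin--Lions applied to $\sqrt{f^\varepsilon}$ would need control of $\partial_t\sqrt{f^\varepsilon}$, which is not available where $f^\varepsilon$ is small, and velocity averaging yields compactness of the velocity moments $\int f^\varepsilon\psi(v)\dd v$ in $(t,x)$, not of $f^\varepsilon$ itself in $(t,x,v)$. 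This is exactly the point the paper sidesteps by working in the formulation that is linear in $f^n f^n_*$. If you want to keep your direct approach to \eqref{FL:H:ab}, you would need to supply a genuine strong-compactness argument (or decouple the argument as the paper does); as written there is a gap here.
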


\begin{remark}[$\cH$-Theorem]

In Part I \cite{DH25}, we showed that for the solution curves of the fuzzy Landau equations satisfying the assumptions of type $1$ and type $2$ in Section \ref{sec:intro} (i.e. moments and $L^p$ bounds), the following entropy identity holds
\begin{align*}
 \cH(f_t)-\cH(f_0)+\int_0^t\cD(f_s)\dd s=0\quad\forall t\in[0,T].  
\end{align*}
The propagation of moments and $L^p$-bounds will be discussed in Section \ref{sec:reg}.


\end{remark}

\begin{proof}



    We first define the sequence of approximated initial value.  Let $\tilde f^n_0\in C^\infty_c(\Do;\R_+)$ such that $0\le \tilde f^n_0\le n$,
    \begin{align*}
        \int_{\Do}|\tilde f^n_0-f_0|(1+|x|^2+|v|^2)\dd x\dd v\to0\quad\text{as}\quad n\to\infty,
    \end{align*}
    and 
     \begin{align*}
        &\int_{\Do}\tilde f^n_0\big(1+|x|^2+|v|^2+|\log \tilde f^n_0|\big)\dd x\dd v\le C
    \end{align*}
    uniformly bounded.
    
 We define
    \begin{equation*}
    \label{initial:app}
        f^n_0=\tilde f^n_0+\frac{1}{n}\exp\big({-\frac{|v|^2}{2}}-\frac{|x|^2}{2}\big).
    \end{equation*}


   We consider the following Cauchy problem
    \begin{equation}
    \label{FL:app}
        \left\{
        \begin{aligned}
        &\d_t f^n+v\cdot\nabla_x f^n=Q^n(f^n,f^n)+\frac{1}{n}\Delta_v f^n\\
        &f^n|_{t=0}=f^n_0,
        \end{aligned}
        \right.
    \end{equation}
    where $Q^n(f,f)=\div_v(\bar a^n\nabla_v f-\bar b^nf)$. We approximate the kernel $A$ by $A_n\in C^\infty(\R^d;[C^{-1},C])$, where $A_n(|z|)=A(|z|)$ when $n^{-1}\le|z|\le n$, $A_n(|z|)=A(n^{-1})$ when $0\le |z|\le (n+1)^{-1}$, and $A_n(|z|)=A(n)$ when $|z|\ge n+1$. And $A_n(|z|)\to A(|z|)$ pointwisely. We define $a^n=A^n\Pi_{\cdot^\perp}$ and $b^n_i=\sum_{j=1}^n\d_ja^n_{ij}$, and 
$\bar a^n=f*_va^n*_x\kappa$ and $\bar b^n=f*_vb^n*_x\kappa$.


   By using a standard fixed-point argument and the uniform ellipticity in $v$ (Lemma \ref{lem:coer} ensures the positivity of $\bar a^n$ and $\Delta_v$ provide the coercivity), we have a unique solution $f^n\in C^1([0,T];C^\infty(\Do))$ of the approximation system \eqref{FL:app}, detailed argument can be found as in, for example \cite{Vil96}.  The maximum principle (see \cite{AB90}) ensures that
    \begin{equation*}
    f^n\ge C_ne^{-a(|x|^2+|v|^2)}.   
    \end{equation*}

    Moreover, by direct computations, we have
    \begin{align*}
       &\frac{d}{dt}\int_{\Do} f^n|v|^2\dd x\dd v=\frac{2}{n}\int_{\Do}f^n\dd x\dd v,\\
        &\frac{d}{dt}\int_{\Do} f^n|x|^2\dd x\dd v=2\int_{\Do}x\cdot vf^n\dd x\dd v,\\
       &\frac{d}{dt}\cH(f^n)=-\cD_n(f^n)-\frac{1}{n}\int_{\Do}\frac{|\nabla f^n|^2}{f^n}\dd x\dd v\le0,
    \end{align*}
    where $\cD_n(f)=\frac12 \int_{\G}\kappa ff_*|\tilde \nabla_n f|^2\dd\eta$ and $\tilde\nabla_n=\sqrt{A_n}\Pi_{(v-v_*)^\perp}(\nabla_vf-\nabla_{v_*}f_*)$.
    Then we have the following uniform bounds
    \begin{gather}
    \int_{\Do}f^n_t|v|^2\dd x\dd v=\int_{\Do}f^n_0|v|^2\dd x\dd v+\frac{2}{n}\|f_0^n\|_{L^1(\Do)},\label{app:consv-law}\\
         \int_{\Do}f^n_t|x|^2\dd x\dd v\le e^{4T}\Big(\int_{\Do}f^n_0|x|^2 \dd x\dd v+\int_0^T\int_{\Do}f_0^n|v|^2\dd x\dd v\dd t\Big),\label{app:consv-law-2}\\
          \cH(f^n_t)-\cH(f^n_0)+\int_0^t\cD_n(f^n_s)\dd s\le0.\label{app:consv-law-3}
        \end{gather}
        
        Following, for example \cite{DL89}, we have $f^n\in L\log L(\Do)$ and $\cD_n(f^n)\in L^1([0,T])$. Indeed, we write  $(\log f)^-=\max(0,-\log f)$. For some $\varepsilon\in(0,\frac{2}{d+2})$, by using the Hölder inequality, by using of $0\le(\log f)^-\lesssim f^{-\varepsilon}$ for any $\varepsilon\in(0,1)$, we have 
        \begin{equation}
        \label{log-}
            \int_{\Do}f(\log f)^-\dd x\dd v\le\|f\|_{L^1_{2,2}(\Do)}^{1-\varepsilon}\|(\langle x\rangle +\langle v\rangle)^{-\frac{2(1-\varepsilon)}{\varepsilon}}\|_{L^1(\Do)}^\varepsilon,  
        \end{equation}
        where we have $\frac{2(1-\varepsilon)}{\varepsilon}>d$.
       We add up \eqref{app:consv-law-3} and \eqref{log-} to derive
        \begin{equation}
        \label{LlogL}
\|f^n\log f^n\|_{L^1(\Do)}+\int_0^t\cD_n(f^n_s)\dd s\le \cH(f^n_0)+C(T)\|f^n_0\|_{L^1_{2,2}(\Do)},    
\end{equation}
where the constant $C(T)$ is independent of $n$.
 Hence, $f^n$ is uniformly bounded in $L^1_{2,2}\cap L\log L(\Do)$ for all $n\in N_+$.

Notice that the weak solutions to the approximation system \eqref{FL:app} satisfies the following weak formulation
  Now we only need to pass to the limit by letting $n\to\infty$ in the approximation weak formulation of the approximation system \eqref{FL:app} 
    \begin{equation}
    \label{weak:app-1}
        \begin{aligned}
       &\int_{\Do}f^n_0\varphi(0)\dd x\dd v+\int_{0}^T\int_{\Do}f^n(\d_t\varphi+v\cdot\nabla_x\varphi+n^{-1}\Delta_v \varphi)\dd x\dd v\dd t\\
&=\frac12\int_0^T\int_{\G}A_n\kappa f^nf^n_* \Phi\dd\eta\dd t,
        \end{aligned}
    \end{equation}
    where $\Phi:=(\nabla_v-\nabla_{v_*})\cdot \big(\Pi_{(v-v_*)^\perp}(\nabla_v\varphi-\nabla_{v_*}\varphi_*)\big)$, and without lose of generality, we assume $\supp(\Phi)=[0,T)\times B_R $, where $B_R\subset \G$ denotes the ball with radius $R$.
    
    We first pass to the limit by letting $n\to\infty$ in \eqref{weak:app-1}. Then we show that such a solution is indeed an $\cH$-solutions satisfying the weak formulation \eqref{FL:H:ab}.

We first discuss the limit of $n\to\infty$. In the following, we split two cases: In Case \ref{case-1}, we consider $A$ satisfying \eqref{A-1} with $\gamma\in[-2,1]$, and $A$ satisfying \eqref{A-2} and \eqref{A-3} with $\gamma\in(-\infty,1]$; In Case \ref{case-2}, we consider $A$ satisfying \eqref{A-1} with $\gamma\in(-\gamma_d,-2)$.
 \begin{enumerate}[\text{Case} $1$]

     \item \label{case-1}
     By Dunford-Pettis Theorem, the  sequence $\{f^n_t\}$
is weakly compact in $L^1(\Do)$ for all $t\in[0,T]$. $f^n$ is uniformly Lipschitz continuous in time, since 
\begin{align*}
    \|f^n_t-f^n_s\|_{L^1(\Do)}&\le \sup_{\|\varphi\|_{H^2(\Do)}\le1}\Big|\int_s^t\int_{\Do}Q^n(f_r^n,f_r^n)\varphi\dd x\dd v\dd r\Big|\\
    &\le C\|f_0\|_{L^1_{0,2+\gamma_+}(\Do)}^2|t-s|.
\end{align*}
 Then a refined Ascoli--Arzelà theorem (see \cite[Proposition 3.3.1]{AGS08}) ensures that (up to a subsequence) $f^n_t\rightharpoonup  f_t$ in $L^1(\Do)$ for all $t\in[0,T]$ and we have $f\in C([0,T];L^1(\Do))$.
One can pass to the limit by letting $n\to\infty$ in \eqref{weak:app-1}, since $A^n$ are uniformly bounded on $B_R$.

\item \label{case-2} By Dunford-Pettis Theorem, up to a subsequence, we have 
    \begin{align*}
f^n\rightharpoonup f\quad\text{in} \quad L^1([0,T]\times\Do).
    \end{align*}

    Now we only need to pass to the limit by letting $n\to\infty$ in \eqref{weak:app-1}. The Left-hand-side holds as a consequence of $f^n\rightharpoonup f$ in $L^1([0,T]\times\Do)$; Concerning the right-hand-side
of \eqref{weak:app-1}, 

Notice that
\begin{equation}
\label{D-n}
 \int_0^T\cD_n(f^n)\dd t\le C.   
\end{equation}
For any $\varepsilon$, on $\Do\times \{|v-v_*|\ge\varepsilon\}$, we have
\begin{align*}
    \sqrt{A^n}f^nf^n_*\mathbb{1}_{\Do\times \{|v-v_*|\ge\varepsilon\}}\rightharpoonup \sqrt{A}ff_*\mathbb{1}_{\Do\times \{|v-v_*|\ge\varepsilon\}}.
\end{align*}
in $L^1([0,T]\times \Do)$.
Notice that 
\begin{align*}
    \Pi_{(v-v_*)^\perp}(\sqrt{A_n}(\nabla_{v}-\nabla_{v_*})\sqrt{ff_*})=\Pi_{(v-v_*)^\perp}((\nabla_{v}-\nabla_{v_*})\sqrt{A_nff_*}),
\end{align*}
and $\cD_n(f^n)$ can be written as
\begin{align*}
\cD_n(f^n)=  2\int_{\G}\kappa |\Pi_{(v-v_*)^\perp}((\nabla_{v}-\nabla_{v_*})\sqrt{A_nf^nf^n_*})|^2\dd\eta.  
\end{align*}
By using the l.w.c of $\cD$, we have for any $\varepsilon>0$
\begin{align*}
&2\int_0^T\int_{\Do\times \{|v-v_*|\ge\varepsilon\}}\kappa |\Pi_{(v-v_*)^\perp}((\nabla_{v}-\nabla_{v_*})\sqrt{Aff_*})|^2\dd\eta\dd t\\
\le&{}  \int_0^T\cD_n(f^n)\dd t\le C.
\end{align*}
Hence, we have $\int_0^T\cD(f)\dd t\le C$.


To show the limit of the right-hand-side of \eqref{weak:app-1}, the main difficulty is to deal with the singularity around $|v-v_*|=0$. We fix any parameter $\delta>0$. Let $\psi\in C^\infty_c(\R^d;[0,1])$ such that $\supp(\psi)\subset B_{\delta}(0)$ and $\psi\equiv1$ on $B_{\frac{\delta}{2}}(0)$. We decompose $|v-v_*|^{2+\gamma}$ in the following way
\begin{align*}
   |v-v_*|^{2+\gamma}&= |v-v_*|^{2+\gamma}(1-\psi(|v-v_*|))+ |v-v_*|^{2+\gamma}\psi(v-v_*)\\
    &=:\Psi^1(|v-v_*|)+\Psi^2(|v-v_*|).
\end{align*}
We decompose $$Q(f,f)=Q^1(f,f)+Q^2(f,f)$$
corresponding to $\Psi^1$ and $\Psi^2$. 
The uniform bound on $\Psi^1$ allows us to pass to the limit in the weak formulation \eqref{weak:app-1}. 
We show the vanishing of the $Q^2$ part, for any $0<\varepsilon<\min(4+\gamma,1)$, we have 
\begin{equation}
\label{singu}
\begin{aligned}
&\Big|\int_0^T\int_{\G}\kappa A_n f^nf^n_*\Phi\dd\eta\dd t\Big|\le \Big|\int_0^T\int_{B_R}\kappa \Psi^2 f^nf^n_*\dd\eta\dd t\Big|\\
\lesssim&{} \delta^\varepsilon\Big|\int_0^T\int_{B_R}f^nf^n_*|v-v_*|^{2+\gamma-\varepsilon}\dd\eta\dd t\Big|\\
\le&{} \delta^\varepsilon\||v|^{2+\gamma-\varepsilon}\mathbb{1}_{\{|v|\le R\}}\|_{L^{k'}}\|f^n\|_{L^1}\|f^n\|_{L^1_tL^1_xL^{k}(B^R_v)},
\end{aligned}
\end{equation}
where $k=\frac{d}{d-2}$ and $k'=\frac{d}{2}$ are defined as in Lemma \ref{lem:sobo}. Notice that the uniform bound on $D_n(f^n)$ in \eqref{D-n} and Lemma \ref{lem:sobo} ensure the uniform bound on $\|f^n\|_{L^1_tL^1_xL^{k}(B^R_v)}$. Moreover, $|v|^{2+\gamma-\varepsilon}\mathbb{1}_{\{|v|\le R\}}\in{L^{k'}}$ since $0<\varepsilon<4+\gamma$ and $k'(2+\gamma-\varepsilon)<d$. Hence, we have 
\begin{align*}
\Big|\int_0^T\int_{\G}\Big(\kappa A_n f^nf^n_*-\kappa A ff_*\Big)\Phi\dd\eta\dd t\Big|\lesssim \delta^{\varepsilon}.
\end{align*}
We note that, in the homogeneous case, one can use $|\varphi(v)-\varphi(v_*)|\lesssim |v-v_*|$ to cancel the kernel singularities, see Remark \ref{rmk:homo}.

  \end{enumerate}
  
We show $f\in L^1_{2,2}(\Do)$ by passing to the limit in \eqref{app:consv-law} and \eqref{app:consv-law-2}
\begin{equation*}
    \begin{aligned}
&\int_{\Do}|v|^2f_t\dd x\dd v\le\int_{\Do}|v|^2f_0\dd x\dd v,\\
&\int_{\Do}|x|^2f_t\dd x\dd v\le C(T)\int_{\Do}(|x|^2+|v|^2)f_0\dd x\dd v.
    \end{aligned}
        \end{equation*}
        And hence, we have the mass and momentum conservation laws. 
        
To pass to the limit in the entropy identity \eqref{app:consv-law-3}, we use the weak lower semi-continuity of $\cH$ and $\cD$ and the weak convergence
\begin{align*}
 f^n\rightharpoonup f\quad\text{in}\quad L^1(\Do)\quad\text{and}\quad    f^nf^n_*\rightharpoonup ff_*\quad\text{in}\quad L^1([0,T)\times\G).
\end{align*}
More precisely, in the case of $\gamma\in[-2,1]$ and the case of $\gamma\in(-\gamma_d,-2)$, we use
\begin{align*}
\sqrt{A^n}f^nf^n_*\mathbb{1}_{\Do\times \{|v-v_*|\le\varepsilon^{-1}\}}&\rightharpoonup \sqrt{A}ff_*\mathbb{1}_{\Do\times \{|v-v_*|\le\varepsilon^{-1}\}}\\
\text{and}\quad \sqrt{A^n}f^nf^n_*\mathbb{1}_{\Do\times \{|v-v_*|\ge\varepsilon\}}&\rightharpoonup \sqrt{A}ff_*\mathbb{1}_{\Do\times \{|v-v_*|\ge\varepsilon\}}
\end{align*}
as in for showing \eqref{D-n}. Hence, we have the entropy inequality 
\begin{align*}
    \cH(f_T)-\cH(f_0)+\int_0^T\cD(f_t)\dd t\le0.
\end{align*}
Similar to \eqref{D-n}, we use \eqref{LlogL} to obtain $\cD(F)\in L^1([0,T])$ is the case of $\gamma\in[-2,1]$.

Now we are left to show the solution satisfying the limit of \eqref{weak:app-1} also fulfils \eqref{FL:H:ab}, i.e.
\begin{equation}
    \label{proof-id}
\begin{aligned}
    &\int_0^T\int_{\G}\kappa ff_*\tilde\nabla\log f\cdot \tilde\nabla\varphi\dd\eta\dd t=\int_0^T\int_{\G}\kappa \tilde\nabla(ff_*)\cdot \tilde\nabla\varphi\dd\eta\dd t\\
    =&{}\int_0^T\int_{\G}\kappa A ff_*(\nabla_v-\nabla_{v_*})\cdot \big(\Pi_{(v-v_*)^\perp}(\nabla_v\varphi-\nabla_{v_*}\varphi_*)\big)\dd\eta\dd t.
\end{aligned}
\end{equation}
Let $\eta=\eta(v)\in C^\infty_c(\R^d;\R_+)$ and $\|\eta\|_{L^1}=1$. For $\beta\in(0,1)$, we define $\eta^\beta=\beta^{-d}\eta(v/\beta)$.We approximate $f$ by $f^\beta$ 
\begin{align*}
f^\beta=(\sqrt{f}*_v \eta^\beta)^2.    
\end{align*}
Notice that $\sqrt f\in L^2([0,T]\times\Do)$, and hence, $\sqrt{f^\beta}\to \sqrt{f}$ in $L^2([0,T]\times\Do)$. Combining with $\|f-f^\beta\|_{L^1(\Do)}\le \|\sqrt f-\sqrt{f^\beta}\|_{L^2(\Do)}\|f+\sqrt{f^\beta}\|_{L^1(\Do)}$, we have
\begin{align*}
f^\beta\to f\quad\text{in}\quad L^1([0,T]\times \Do).     
\end{align*}
Notice that \eqref{proof-id}
holds for $f^\beta$ by integration by parts in $v$. 

Since $(\nabla_v-\nabla_{v_*}) f^\beta f^\beta_*\rightharpoonup(\nabla_v-\nabla_{v_*}) f f_*$ in $\cD'([0,T]\times\Do)$, we have
\begin{align*}
   \lim_{\beta\to0}\int_0^T\int_{\G}\kappa \tilde\nabla(f^\beta f^\beta_*)\cdot \tilde\nabla\varphi\dd\eta\dd t= \int_0^T\int_{\G}\kappa \tilde\nabla(ff_*)\cdot \tilde\nabla\varphi\dd\eta\dd t,
\end{align*}
where in the case of $\gamma\in(-\gamma_d,-2)$, we remove $\{|v-v_*|\le \varepsilon\}$ and repeat the argument as in \eqref{singu}.

The first equality in \eqref{proof-id} is ensured by the boundedness of 
\begin{align*}
&\Big|\int_0^T\int_{\G}\kappa  ff_*\tilde\nabla \varphi\cdot \tilde\nabla\log f\dd\eta\dd t\Big|
    \lesssim \int_0^T\cD(f)\dd t+\int_0^T\int_{B_R}ff_* A(|v-v_*|)\dd\eta\dd t,
\end{align*}
where we showed $\cD(f)\in L^1([0,T])$, and the integrability of the second term on right-hand-side is discussed in Remark \ref{rmk-1}.
\end{proof}

\section{Propagation of regularities}
\label{sec:reg}
In this Section, we show a priori estimates of propagation of moments and $L^p$ bounds in Section \ref{sec:moment} and Section \ref{sec:Lp}, respectively.
\subsection{Moment estimates}\label{sec:moment}

In this section, we show the propagation for moment 
\begin{align*}
    M_s(f)=\int_{\Do}\langle v\rangle^s f\dd x\dd v,\quad s>2.
\end{align*}
In the homogeneous case, the propagation of moments has been studied in particular for dimension $d=3$ in, for example, \cite{DV00a,Wu14,Des15,TV00}. We follow the argument for homogeneous cases with appropriate modifications for the spatial variable $x$ and all dimensions $d\ge2$.

We recall the fuzzy Landau equation \eqref{Landau-fuz} 
\begin{equation*}
    (\d_t+v\cdot\nabla_x)f=\div_v(a\kappa f_*\nabla_v f-b\kappa f_*f),
\end{equation*}
where $a_{ij}(v-v_*)=\big(\delta_{ij}-\frac{(v-v_*)_i(v-v_*)_j}{|v-v_*|^2}\big)A(|v-v_*|)$ and $b_i(v-v_*)=-(d-1)\frac{(v-v_*)_i}{|v-v_*|^2}A(|v-v_*|)$. 
To derive a priori estimates for $M_s(f)$, we test the fuzzy Landau equation by $\langle v\rangle^s$
\begin{equation*}
 \label{ap:Ms}   
\begin{aligned}
    \frac{d}{dt}M_s(f)=&\frac12\sum_{i,j=1}^d\int_{\G}\kappa ff_* a_{ij}(\d_{ij}\langle v\rangle^s+(\d_{ij}\langle v\rangle^s)_*)\dd \eta\\
   &+\sum_{i=1}^d\int_{\G}\kappa ff_* b_{i}(\d_{i}\langle v\rangle^s-(\d_{i}\langle v\rangle^s)_*)\dd\eta.
\end{aligned}
\end{equation*}
Notice that
\begin{equation}
\label{ap:s}
    \d_i \langle v\rangle^s=sv_i\langle v\rangle ^{s-2}\quad \text{and }\quad \d_{ij}\langle v\rangle^s=\delta_{ij}s\langle v_i\rangle ^{s-2}+s(s-2)v_iv_j\langle v\rangle^{s-4}.
\end{equation} 
We comment on the rigorous proof of the propagation of moments. Instead of test 
fuzzy Landau equation by $\langle v\rangle^s$, we use $\langle v\rangle^s\chi(\varepsilon\langle v\rangle)$ then pass to the limit by letting $\varepsilon\to0$, where the cut-off function $\chi\in C^\infty_c(\R^d;\R_+)$, $\chi=1$ if $|v|\le1$ and $\chi=0$ if $|v|\ge2$. Since $\varepsilon |v_i|\le 2$ for $v\in \supp(\chi(\varepsilon\langle v\rangle))$, we have $|\d_i(\langle v\rangle^s\chi(\varepsilon\langle v\rangle))|\lesssim |\d_i \langle v\rangle^s|\lesssim \langle v\rangle^{s-1}$ and $|\d_{ij}(\langle v\rangle^s\chi(\varepsilon\langle v\rangle))|\lesssim |\d_{ij} \langle v\rangle^s|\lesssim \langle v\rangle^{s-2}$. We refer to the discussion on the rigorous proof for the homogeneous Landau equations in \cite{DV00a} and \cite{Des15}. For simplification, we only show the a priori estimates in the following.


In this subsection, we consider various kernels in concrete statements. We consider the interaction kernels $A$ satisfying \eqref{A-1}, \eqref{A-2} or \eqref{A-3} 
\begin{align*}
A(|z|)=|z|^{2+\gamma},\quad A(|z|) \sim \langle z\rangle^{2+\gamma},\quad A(|z|) \sim |z|^2\langle z\rangle^{\gamma},
\end{align*}
and the spatial kernels $\kappa$ satisfying \eqref{k-1} and \eqref{k-2}
\begin{equation*}
\kappa(x) \sim 1\quad \text{or}\quad  \kappa(x)=G(x)= k_1\exp(-k_2\langle x\rangle).
\end{equation*}

In the following, we study the hard and moderately soft potential cases in Lemma \ref{lem:M-1}; The very soft potential cases are discussed in Lemma \ref{lem:M-2}.

\begin{lemma}\label{lem:M-1}
Let $d\ge 2$ and $s>2$. Let $f$ be a $\cH$-solution to the fuzzy Landau equation \eqref{Landau-fuz} such that $\|f\|_{L^1(\Do)}=1$, $M_2(f)\le E_0$ and $\cH(f)\le H_0$. The following estimates hold:
\begin{enumerate}[(1)]
\item Let $A(|v-v_*|)\sim |v-v_*|^{2}\langle v-v_*\rangle^\gamma$.
Let $\kappa(x) \sim 1$. Let $M_s(f_0)<+\infty$. 
In the case of $\gamma\in(-2,0]$, we have 
\begin{align*}
    M_s(f)\lesssim 1+t.
\end{align*}
In the case of $\gamma\in(-\infty,-2]$, we have 
\begin{align*}
    M_s(f)\lesssim\langle t\rangle^{\frac{s-2}{3}}.
\end{align*}
In particular, in the case of $\gamma=0$ and $\gamma=-2$, the above estimates hold for the kernel $A(|v-v_*|)=|v-v_*|^{2+\gamma}$.

\item 
Let $A(|v-v_*|)\sim |v-v_*|^{2}\langle v-v_*\rangle^\gamma$ or $A(|v-v_*|)\sim \langle v-v_*\rangle^{\gamma+2}$.  Let $\kappa(x)=G(x)$ or $\kappa\sim1$. Let $M_s(f_0)<+\infty$.

Let $s>2$. In the case of $\gamma\in(-1,0]$, $\gamma\in(-2,-1)$ and $\gamma\in(-\infty,-2]$, we have 
\begin{align*}
    M_s(f)\lesssim \langle t\rangle^{\frac{s-2}{2}},\quad M_s(f)\lesssim \langle t\rangle^{\frac{s-2}{1+|\gamma|}}\quad\text{and}\quad M_s(f)\lesssim \langle t\rangle^{\frac{s-2}{3}},
\end{align*}
respectively.

Let $\gamma\in(0,1]$ and $s\ge 3+\gamma$. We have
\begin{align*}
    M_s(f)\lesssim \exp(C\exp(Ct)).
\end{align*}

\item  Let $\gamma\in(-2,0)$ and $A=|v-v_*|^{2+\gamma}$.
Let $\kappa(x) \sim 1$ or $\kappa(x)=G(x)$. If $M_s(f_0)<+\infty$, then we have 
\begin{align*}
    M_s(f)\lesssim\langle t\rangle^{\frac{s-2}{3}}.
\end{align*}

     \item Let $\gamma\in(0,1]$.  Let $A(|v-v_*|)\sim |v-v_*|^2\langle v-v_*\rangle^{\gamma}$ or $A(|v-v_*|)=|v-v_*|^{2+\gamma}$. 
Let $M_s(f_0)<+\infty$. 

In the case of $\kappa(x)=G(x)$, we have 
 \begin{align*}
  M_{s}(f)\le CM_{s}(f_0)\exp(Ct).
\end{align*}

In the case of  $\kappa(x) \sim 1$, we have
\begin{align*}
M_s(f)+ \int_0^t  M_{s+\gamma}(f) \lesssim M_s(f_0)\exp(Ct).
\end{align*}

\end{enumerate}
\end{lemma}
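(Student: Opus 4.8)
The plan is to establish the stated a priori estimates by testing \eqref{Landau-fuz} against $\langle v\rangle^s$ (rigorously, against $\langle v\rangle^s\chi(\varepsilon\langle v\rangle)$ and then letting $\varepsilon\to0$ as indicated above), so that it suffices to control $\frac{d}{dt}M_s(f)$. Substituting the derivative identities \eqref{ap:s} and the expressions \eqref{abc} for $\operatorname{tr}(a)=(d-1)A$ and $b_i$ into the moment identity quoted above, symmetrising the integral under $(x,v)\leftrightarrow(x_*,v_*)$ (permissible since $\kappa$ is even and $\|f\|_{L^1}=1$), and cancelling the two contributions proportional to $A\langle v\rangle^{s-2}$ — one coming from the trace part of the diffusion, the other from $b_i$ — one is led to the exact identity
\begin{equation*}
\begin{aligned}
\frac{d}{dt}M_s(f)={}&s(s-2)\int_{\G}\kappa ff_*\langle v\rangle^{s-4}\big|\Pi_{(v-v_*)^\perp}v\big|^2A\,\dd\eta\\
&{}-\frac{(d-1)s}{2}\int_{\G}\kappa ff_*\frac{A}{|v-v_*|^2}\big(|v|^2-|v_*|^2\big)\big(\langle v\rangle^{s-2}-\langle v_*\rangle^{s-2}\big)\,\dd\eta .
\end{aligned}
\end{equation*}
The second integrand is non-negative (the factors $|v|^2-|v_*|^2$ and $\langle v\rangle^{s-2}-\langle v_*\rangle^{s-2}$ always share their sign), so the second term is dissipative and only helps; the first term is the production, and what makes the whole scheme work is the identity $\Pi_{(v-v_*)^\perp}v=\Pi_{(v-v_*)^\perp}v_*$, whence $|\Pi_{(v-v_*)^\perp}v|^2\le\min(|v|,|v_*|)^2$, which removes the top-order weight $\langle v\rangle^s$ from the production.

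For the production I would split $\R^d_{v_*}=\{|v_*|\le\langle v\rangle\}\cup\{|v_*|>\langle v\rangle\}$, using $|\Pi_{(v-v_*)^\perp}v|^2\le|v_*|^2$ on the first set and $|\Pi_{(v-v_*)^\perp}v|^2\le|v|^2\le\langle v\rangle^2$ on the second (together with the Chebyshev bound $\int_{|v_*|>\langle v\rangle}|v_*|^{2+\gamma_+}f_*\,\dd v_*\lesssim\langle v\rangle^{2+\gamma_+-s}M_s(f)$ and the upper bounds $A\lesssim\langle v\rangle^{2+\gamma_+}\langle v_*\rangle^{2+\gamma_+}$, respectively $A\lesssim1$ when $\gamma\le-2$ in \eqref{A-2}--\eqref{A-3}). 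This bounds the production by $M_{s-\ell}(f)$ in the soft regimes, with $\ell=2$ for kernel \eqref{A-1} with $\gamma\in(-2,0)$ and for \eqref{A-2}--\eqref{A-3} with $\gamma\in(-1,0]$, $\ell=1+|\gamma|$ for $\gamma\in(-2,-1)$, and $\ell=3$ once $A$ is bounded; and by $C\,M_s(f)+C$ in the hard regime $\gamma\in(0,1]$, where the $M_s$ comes from the tail $\{|v_*|>\langle v\rangle\}$ and the Chebyshev step needs $s>2+\gamma$, hence the hypothesis $s\ge3+\gamma$. For the dissipative term, when $\kappa\simeq1$ I restrict the integral to $\{|v|\ge2R,\ |v_*|\le R\}$ with $R$ large enough that $\int_{|v_*|\le R}f_*\ge\frac12$ (possible because $\|f\|_{L^1}=1$ and $M_2(f)\le E_0$); there $\frac{A}{|v-v_*|^2}\gtrsim\langle v\rangle^\gamma$, while $|v|^2-|v_*|^2\gtrsim\langle v\rangle^2$ and $\langle v\rangle^{s-2}-\langle v_*\rangle^{s-2}\gtrsim\langle v\rangle^{s-2}$, so the dissipation is $\ge c\,M_{s+\gamma}(f)-C$. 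When $\kappa=G$, the same lower bound holds with an extra factor $e^{-C\langle x\rangle}$, exactly as in the proof of the coercivity Lemma \ref{lem:coer}.

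Combining the two estimates closes the argument. For $\gamma\in(-2,0]$ and $\kappa\simeq1$, Young's inequality absorbs $M_{s-2}(f)$ into $\frac c2M_{s+\gamma}(f)$ — legitimate precisely because $\gamma>-2$ — leaving $\frac{d}{dt}M_s\le C$ and hence $M_s(f)\lesssim1+t$. When no usable absorption is at hand (kernel \eqref{A-1} with $\gamma\in(-2,0)$, or $\kappa=G$, or $\gamma\le-2$), one discards the dissipative term and interpolates $M_{s-\ell}(f)\le M_s(f)^{1-\ell/(s-2)}M_2(f)^{\ell/(s-2)}$, obtaining $\frac{d}{dt}M_s\lesssim M_s^{1-\ell/(s-2)}$, which integrates to $M_s(f)\lesssim\langle t\rangle^{(s-2)/\ell}$ with the stated value of $\ell$. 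For the hard potentials $\gamma\in(0,1]$ the production is at most $C\,M_s(f)+C$ and cannot be fully absorbed into $-cM_{s+\gamma}(f)$, so Grönwall gives $M_s(f)\lesssim M_s(f_0)e^{Ct}$ and, when $\kappa\simeq1$, keeping the dissipative term on the left yields in addition $\int_0^tM_{s+\gamma}(f)\lesssim M_s(f_0)e^{Ct}$; the weaker double-exponential bound for kernel \eqref{A-2} reflects the degradation of the absorption for that less regular kernel. I expect the main obstacle to be the precise moment bookkeeping in the production estimate — extracting the largest admissible $\ell$ without invoking moments beyond $M_s(f)$ — and, for $\kappa=G$, carrying the spatial weight $e^{-C\langle x\rangle}$ through the absorption; note that the genuinely very soft kernel \eqref{A-1} with $\gamma<-2$ is excluded here (and treated in Lemma \ref{lem:M-2}) because then $A$ is singular at $v=v_*$ and the production bound above fails.
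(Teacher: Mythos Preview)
Your approach is close to the paper's: the identity you derive is equivalent to \eqref{M-0}, and discarding your non-negative symmetrised dissipation term is exactly the content of the paper's H\"older trick \eqref{h-2}. For parts (1), (2) in the regimes $\gamma\in(-1,0]$ and $\gamma\le-2$, and for part (4), your scheme works essentially as written; in (4) you in fact bypass the Povzner inequality the paper invokes, obtaining the dissipation lower bound $\gtrsim M_{s+\gamma}-C$ directly by restriction to $\{|v|\ge 2R,\ |v_*|\le R\}$, which is a genuine simplification.

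There is, however, a concrete gap in part (3). Your claim $\ell=2$ for $A=|v-v_*|^{2+\gamma}$ with $\gamma\in(-2,0)$ gives only $M_s\lesssim\langle t\rangle^{(s-2)/2}$, strictly weaker than the stated $\langle t\rangle^{(s-2)/3}$. The point is that your bound $|\Pi_{(v-v_*)^\perp}v|^2\le\min(|v|,|v_*|)^2$, while sharp as a pointwise estimate on the projection, does not interact with the singular factor $|v-v_*|^{2+\gamma}$. The paper instead writes $|\Pi v|^2|v-v_*|^2=|v|^2|v_*|^2\sin^2\alpha$ and exploits the elementary inequalities $|v|\sin\alpha\le|v-v_*|$ and $|v_*|\sin\alpha\le|v-v_*|$ to extract a factor $|v-v_*|^{|\gamma|}$ from $\sin^2\alpha$; this cancels $|v-v_*|^\gamma$ exactly and leaves a production of the form $\int\langle v\rangle^{s-4}(|v||v_*|)^{2-|\gamma|/2}ff_*$, which pushes $\ell$ strictly above $2$. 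Without this angle trick your $\ell=2$ is the best you can do, and the stated exponent is not reached.

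A related bookkeeping issue arises in part (2) for $\gamma\in(-2,-1)$: combining $|\Pi v|^2\le\min(|v|,|v_*|)^2$ with $A\lesssim\langle v\rangle^{2+\gamma}\langle v_*\rangle^{2+\gamma}$ and restricting to $\{|v|\ge|v_*|\}$ leaves $M_{4+\gamma}$ on the $v_*$-side, which is not controlled by $E_0$ since $4+\gamma>2$. To recover $\ell=1+|\gamma|$ you need the geometric-mean form $|\Pi v|^2\le|v||v_*|$ (equivalently $|v|^2|v_*|^2-(v\cdot v_*)^2\le|v||v_*||v-v_*|^2$), which is precisely what the paper uses in \eqref{Ms-3}; your splitting into $\{|v_*|\le\langle v\rangle\}$ and its complement, together with Chebyshev on the tail, does not by itself produce this.
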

\begin{proof}
We follow \cite{DV00a} to test the fuzzy Landau equation \eqref{FL:ab} by $\langle v\rangle^s$, $s>2$. 
Then we have 
\begin{align*}
&\frac{d}{dt}M_s(f)=2s\sum_{i=1}^d\int_{\G}\kappa ff_* b_{i}\langle v\rangle^{s-2}v_i\dd \eta\\
&\quad+s\sum_{i=1}^d\int_{\G}\kappa ff_* a_{ii}\langle v\rangle^{s-2}\dd \eta+s(s-2)\sum_{i,j=1}^d\int_{\G}\kappa ff_* a_{ij}\langle v\rangle^{s-4}v_iv_j\dd \eta\\
&=-2s(d-1)\sum_{i=1}^d\int_{\G}\kappa ff_* \frac{A(|v-v_*|)}{|v-v_*|^2}\langle v\rangle^{s-2}(v-v_*)_iv_i\dd \eta\\
&\quad+s(d-1)\int_{\G}\kappa ff_* \frac{A(|v-v_*|)}{|v-v_*|^2}\langle v\rangle^{s-2}|v-v_*|^2\dd \eta\\
&\quad+s(s-2)\sum_{i,j=1}^d\int_{\G}\kappa ff_* \frac{A(|v-v_*|)}{|v-v_*|^2}\langle v\rangle^{s-4}v_iv_j(\delta_{ij}-(v-v_*)_i(v-v_*)_j)\dd \eta.
\end{align*}
Notice that
\begin{gather*}
|v-v_*|^2-2\sum_{i=1}^d (v-v_*)_iv_i=2|v_*|^2-2|v|^2,\\
\sum_{i,j=1}^d(|v-v_*|^2\delta_{ij}-(v-v_*)_i(v-v_*)_j)v_iv_j=|v|^2|v_*|^2-(v\cdot v_*)^2.
\end{gather*}
Hence, we have
\begin{equation}
\label{M-0}
\begin{aligned}
&\frac{d}{dt}M_s(f)\\
=&{}s\int_{\G}\kappa ff_* \frac{A(|v-v_*|)}{|v-v_*|^2}\langle v\rangle^{s-2}\Big((d-1)(|v_*|^2-|v|^2)\\
&\qquad+(s-2)\frac{|v|^2|v_*|^2-(v\cdot v_*)^2}{1+|v|^2}\Big)\dd \eta.
\end{aligned}
\end{equation}
Since $\frac{|v|^2|v_*|^2-(v\cdot v_*)^2}{1+|v|^2}\le |v_*|^2$ for $s>2$, we have
\begin{equation}
\label{Ms}
\begin{aligned}
&\frac{d}{dt}M_s(f)\\
\leq &{}s\int_{\G}\kappa ff_* \frac{A(|v-v_*|)}{|v-v_*|^2}\langle v\rangle^{s-2}\big(-(d-1)|v|^2+(s+d-3)|v_*|^2\big)\dd \eta.
\end{aligned}
\end{equation}

\begin{enumerate}[(1)]
 \item

In the case of $\gamma\in(-2,0]$, the estimate \eqref{Ms} implies that
\begin{equation}
    \label{Ms-0}
 \begin{aligned}
    \frac{d}{dt}M_s(f)
    &\lesssim \int_{\G} f f_*(-\langle v\rangle^{s}\langle v-v_*\rangle^{\gamma}+C\langle v\rangle^{s-2}\langle v_*\rangle^{2})\dd\eta\\
    &\le -C_1M_{s+\gamma }(f)+C_2M_{s -2}(f)
\end{aligned}
\end{equation}
for some constants $C_1,\,C_2>0$, where we use the Petree's inequality
\begin{equation*}
    \label{petree}
   2^{|\gamma|/2}\langle v\rangle^{\gamma}\langle v_*\rangle^{\gamma}\le  {\langle v-v_*\rangle^\gamma}.
  \end{equation*}
By Hölder's inequality with exponents ${\frac{2+\gamma}{s+\gamma}}+{\frac{s -2}{s+\gamma}}=1$, we have 
\begin{equation*}
\label{hy}
    M_{s-2}(f)\le M_0(f)^{\frac{2+\gamma}{s+\gamma}}M_{s+\gamma}(f)^{\frac{s -2}{s+\gamma}}\le M_{s+\gamma}(f)^{\frac{s -2}{s+\gamma}}.
\end{equation*}
Then by Young's inequality, we have 
\begin{equation}
\label{young}
    M_{s -2}(f)\le C+\frac{C_1}{C_2}M_{s+\gamma}(f).   
\end{equation}
We substitute \eqref{young} to \eqref{Ms-0} to derive 
 \begin{align*}
    M_s(f_t)
   \le M_s(f_0)+Ct.
\end{align*}

In the case of $\gamma\in(-\infty,-2]$, we follow \cite{TV00} with appropriate modifications to the fuzzy case.
 We recall the estimate \eqref{M-0}
\begin{equation}
\label{k:upper:bdd:moment:2}
     \begin{aligned}
     &\frac{d}{dt}M_s(f_t)
  = s\int_{\G}\kappa f f_*\frac{A(|v-v_*|)}{|v-v_*|^2}\langle v\rangle^{s-2}\\
    &\quad\times \Big(-(d-1)\langle v\rangle^2+(d-1)\langle v_*\rangle^2+(s-2)\Big(\frac{|v|^2|v_*|^2-(v\cdot v_*)^2}{1+|v|^2}\Big)\Big)\dd\eta.
\end{aligned}
\end{equation}

By using Hölder inequality, we have
\begin{equation}
    \label{h-2}
     \begin{aligned}
   &\int_{\G}\kappa f f_*\frac{A(|v-v_*|)}{|v-v_*|^2}\langle v\rangle^{s-2}\langle v_*\rangle^{2}\dd\eta\\
   \le& \Big(\int_{\G} \kappa f f_*\frac{A(|v-v_*|)}{|v-v_*|^2}\langle v\rangle^{s}\dd\eta\Big)^{\frac{s-2}{s}}\Big(\int_{\G}\kappa f f_*\frac{A(|v-v_*|)}{|v-v_*|^2}\langle  v_*\rangle^{s}\dd\eta\Big)^{\frac{2}{s}}\\
   =&\int_{\G}\kappa  f f_*\frac{A(|v-v_*|)}{|v-v_*|^2}\langle v\rangle^{s}\dd\eta.
\end{aligned}
\end{equation}
 We substitute \eqref{h-2} to \eqref{k:upper:bdd:moment:2} to derive
\begin{equation*}
\label{M-3}
     \begin{aligned}
     \frac{d}{dt}M_s(f_t)
    \lesssim&\int_{\G}\kappa f f_*\frac{A(|v-v_*|)}{|v-v_*|^2}\langle v\rangle^{s-4}(|v|^2|v_*|^2-(v\cdot v_*)^2)\dd\eta.
\end{aligned}
\end{equation*}
     
Notice that 
\begin{equation*}
    \label{b-2}
    \begin{aligned}
&|v|^2|v_*|^2-(v\cdot v_*)^2=|v|^2|v_*|^2\sin^2\alpha\\
\le&{} |v||v_*|(|v|\sin\alpha)(|v_*|\sin\alpha)\\
\le&{} |v||v_*||v-v_*|^2,
\end{aligned}
\end{equation*}
where we denote the angle $\alpha=\arccos \frac{v\cdot v_*}{|v||v_*|}$. Hence, we have
which implies that
 \begin{equation}
 \label{Ms-3}
    \frac{d}{dt}M_s(f_t)\lesssim \int_{\G} f f_*A(|v-v_*|)\langle v\rangle^{s-4}|v||v_*|\dd\eta\lesssim M_{s-3}(f).
\end{equation}
When $s\in(2,5]$, the right-hand side of the above inequality is bounded by energy $E_0$.
In the case of $s>5$, wewrite
\begin{align*}
    \langle v\rangle^{s-3}f=\langle v\rangle^{s-3-\frac{6}{s-2}}f^{\frac{s-5}{s-2}}\langle v\rangle^{\frac{6}{s-2}} f^{\frac{3}{s-2}}.
\end{align*}
By Hölder's inequality with exponents $\frac{s-5}{s-2}+\frac{3}{s-2}=1$, we have 
\begin{equation}
\label{b-3}
M_{s-3}(f)\le M_s(f)^{\frac{s-5}{s-2}}E_0^{\frac{3}{s-2}},  
\end{equation}
which implies that 
\begin{align*}
    M_s(f)\lesssim\langle t\rangle^{\frac{s-2}{3}}.
\end{align*}

\item 
In the case of $\gamma\in [-1,0]$, the estimate \eqref{Ms} implies that
\begin{equation*}
\label{Ms-1-2}
    \frac{d}{dt}M_s(f_t)
    \lesssim M_{s -2}(f).
\end{equation*}
When $s\in(2,4]$, the right-hand side of the above inequality is bounded by energy $E_0$.
In the case of $s>4$, we  write
\begin{align*}
    \langle v\rangle^{s-2}f=\langle v\rangle^{s-2-\frac{4}{s-2}}f^{\frac{s-4}{s-2}}\langle v\rangle^{\frac{4}{s-2}} f^{\frac{2}{s-2}}.
\end{align*}
By Hölder's inequality with exponents $\frac{s-4}{s-2}+\frac{2}{s-2}=1$, we have 
\begin{equation*}
M_{s-2}(f)\le M_s(f)^{\frac{s-4}{s-2}}E_0^{\frac{2}{s-2}},   
\end{equation*}
which implies that
\begin{align*}
    M_s(f)\lesssim\langle t\rangle^{\frac{s-2}{2}}.
\end{align*}

In the case of $\gamma\in(-\infty,-1)$, we recall the estimate \eqref{Ms-3},
 \begin{align*}
    \frac{d}{dt}M_s(f_t)&\lesssim \int_{\G} f f_*A(|v-v_*|)\langle v\rangle^{s-4}|v||v_*|\dd\eta\\
    &\lesssim \int_{\G} f f_*(\langle v\rangle^{s+\gamma-1}\langle v_*\rangle+\langle v\rangle^{s-3}\langle v_*\rangle^{3+\gamma})\dd\eta\\
    &\lesssim M_{s+\gamma-1}(f)+M_{s-3}(f).
    \end{align*}
    We first deal with the case $\gamma\in(-2,-1)$. When $s\in(2,3+\gamma]$, the right-hand side of the above inequality is bounded by energy $E_0$.
In the case of $s>3+\gamma$, we  write
\begin{align*}
    \langle v\rangle^{s+\gamma-1}f=\big(\langle v\rangle^{s}f\big)^{\frac{s+\gamma-3}{s-2}}\big(\langle v\rangle^{2} f\big)^{\frac{1-\gamma}{s-2}}.
\end{align*}
By Hölder's inequality, we have 
\begin{equation*}
M_{s-2}(f)\le M_s(f)^{\frac{s+\gamma-3}{s-2}}E_0^{\frac{1-\gamma}{s-2}},   
\end{equation*}
which implies that
\begin{align*}
    M_s(f)\lesssim\langle t\rangle^{\frac{s-2}{1-\gamma}}.
\end{align*}
Similarly, in the case of $\gamma\in(-\infty,-2]$ and $s>5$, by using of \eqref{b-3}, we have 
\begin{equation*}
M_{s-2}(f)\le M_s(f)^{\frac{s-5}{s-2}}E_0^{\frac{3}{s-2}},   
\end{equation*}
which implies that
\begin{align*}
    M_s(f)\lesssim\langle t\rangle^{\frac{s-2}{3}}.
\end{align*}

In the hard potential case $\gamma\in(0,1]$, we recall the estimate \eqref{Ms-3},
 \begin{align*}
    \frac{d}{dt}M_s(f_t)&\lesssim \int_{\G} f f_*A(|v-v_*|)\langle v\rangle^{s-4}|v||v_*|\dd\eta\\
    &\lesssim \int_{\G} f f_*(\langle v\rangle^{s+\gamma-1}\langle v_*\rangle+\langle v\rangle^{s-3}\langle v_*\rangle^{3+\gamma})\dd\eta\\
    &\lesssim M_{s+\gamma-1}(f)+M_{s-3}(f)M_{3+\gamma}(f).
    \end{align*}
Let $s=3+\gamma$. We have 
 \begin{align*}
    \frac{d}{dt}M_{3+\gamma}(f_t)\lesssim M_{3+\gamma}(f),
    \end{align*}
    which implies that $M_{3+\gamma}(f)\lesssim \exp(Ct)$.
For $s>3+\gamma$, the estimate $\frac{d}{dt}M_s(f_t)\lesssim M_{s}(f)M_{3+\gamma}(f)$ implies that 
$$M_{3+\gamma}(f)\lesssim \exp(C\exp(Ct)).$$

 \item 
 We recall \eqref{Ms-3}
\begin{equation*}
     \begin{aligned}
     \frac{d}{dt}M_s(f_t)
    \lesssim&\int_{\G}\kappa f f_*|v-v_*|^\gamma\langle v\rangle^{s-4}(|v|^2|v_*|^2-(v\cdot v_*)^2)\dd\eta.
\end{aligned}
\end{equation*}
In the case of $\gamma\in(-2,0)$, we use 
\begin{align*}
&|v|^2|v_*|^2-(v\cdot v_*)^2=|v|^2|v_*|^2\sin^2\alpha\\
\le&{} \big(|v||v_*|\big)^{1-^{\frac{|\gamma|}{2}}}(|v|\sin\alpha)^{\frac{|\gamma|}{2}}(|v_*|\sin\alpha)^{\frac{|\gamma|}{2}}|\sin\alpha|^{2-|\gamma|}\\
\le&{} |v|^{1-^{\frac{|\gamma|}{2}}}|v_*|^{1-^{\frac{|\gamma|}{2}}}|v-v_*|^{|\gamma|},
\end{align*}
where we denote the angle $\alpha=\arccos \frac{v\cdot v_*}{|v||v_*|}$. Similar to $(3)$, we have 
\begin{align*}
    \frac{d}{dt}M_s(f)\lesssim M_{s-3}(f)\quad\text{and}\quad M_s(f)\lesssim\langle t\rangle^{\frac{s-2}{3}}.
\end{align*}

    \item We follow \cite{DV00a} with appropriate modifications to the fuzzy case. We only show the case of $\frac{A(|v-v_*|)}{|v-v_*|^2}\sim\langle v-v_*\rangle^{\gamma}$, the case of  $A(|v-v_*|)=|v-v_*|^{2+\gamma}$ follows analogously. We recall the estimate \eqref{Ms}
\begin{align*}
    &\frac{d}{dt}M_s(f_t)
    \lesssim\int_{\G}\kappa f f_*\langle v-v_*\rangle^\gamma\langle v\rangle^{s-2}(-|v|^2+C|v_*|^2)\dd\eta.
\end{align*}
By changing the variable, we have 
     \begin{align*}
     &\frac{d}{dt}M_s(f_t)
    \lesssim \int_{\G}\kappa f f_*\langle v-v_*\rangle^\gamma\\
    &\quad\times\big(-\langle v\rangle^{s}-\langle v_*\rangle^{s}+C\langle v\rangle^{s-2}\langle v_*\rangle^{2}+C\langle v\rangle^{2}\langle v_*\rangle^{s-2}\big)\dd\eta.
\end{align*}
By Povzner inequality (see for example \cite[Lemma 1]{DV00a}), we have
\begin{align*}
&-\langle v\rangle^{s}+C\langle v\rangle^{s-2}\langle v_*\rangle^2-\langle v_*\rangle^{s}+C\langle v_*\rangle^{s-2}\langle v\rangle^2\\
\le&{} -K_1 \langle v\rangle^{s}+C_1(\langle v\rangle^{s-1}\langle v_*\rangle+\langle v_*\rangle^{s-1}\langle v\rangle)   
\end{align*}
for some constants $K_1,\,C_1>0$.
Then we have
\begin{equation}
    \label{bounds:1:R3:3}
 \begin{aligned}
    \frac{d}{dt}M_s(f)&\lesssim\int_{\G}\kappa f f_*\langle v-v_*\rangle^\gamma(-K_1 \langle v\rangle^{s}+\langle v\rangle^{s-1}\langle v_*\rangle+\langle v_*\rangle^{s-1}\langle v\rangle)\dd\eta.
\end{aligned}
\end{equation}

In the case of $\kappa(x)=G(x)$, the estimate \eqref{bounds:1:R3:3} implies that 
\begin{equation*}
 \begin{aligned}
    \frac{d}{dt}M_s(f)&\lesssim\int_{\G} ff_*\big(\langle v\rangle^{s+\gamma-1}\langle v_*\rangle+\langle v\rangle^{s-1}\langle v_*\rangle^{\gamma+1}\big)\dd\eta\lesssim M_{s}(f).
\end{aligned}
\end{equation*}

 Grönwall's inequality implies that
\begin{align*}
  M_{s}(f)\le CM_{s}(f_0)\exp(Ct).
\end{align*}

In the case of $\kappa(x)\sim 1$, we use the lower bounds of $\kappa$ to move the first term on the right-hand side of \eqref{bounds:1:R3:3} to derive 
\begin{equation*}
    \label{bounds:total:1}
 \begin{aligned}
    &\frac{d}{dt}M_s(f)+CM_{s+\gamma}(f)\\
    \lesssim&{}\int_{\G} f f_*(\langle v\rangle^{s}\langle v_*\rangle^{\gamma}+\langle v\rangle^{s+\gamma-1}\langle v_*\rangle+\langle v\rangle^{s-1}\langle v\rangle^{\gamma+1})\dd\eta\\
    \lesssim &{}M_{\gamma}(f)M_{s}(f)+M_{s+\gamma-1}(f)M_{1}(f)+M_{s-1}(f)M_{\gamma+1}(f)\\
     \lesssim &{} M_{s}(f),
\end{aligned}
\end{equation*}
where we use
\begin{align*}
    \langle v\rangle^\gamma-\langle v_*\rangle^\gamma\le \langle v-v_*\rangle^\gamma\le\langle v\rangle^\gamma+\langle v_*\rangle^\gamma.
\end{align*}
 Grönwall's inequality implies that
\begin{align*}
M_s(f)+ \int_0^t  M_{s+\gamma}(f) \lesssim M_s(f_0)\exp(Ct).
\end{align*}
\end{enumerate}
\end{proof}

Based on \eqref{ap:Ms} and \eqref{ap:s}, we have the following a priori estimates for the very soft potential case ($A=|v-v_*|^{2+\gamma}$). Notice that the a priori estimate is global-in-time for $\gamma\in(-3,-2)$ when $d=3$, and only for $\gamma\in[1-\sqrt{21},-2)$ when $d\ge 4$.

\begin{lemma}\label{lem:M-2}
Let $f$ be a $\cH$-solution to the fuzzy Landau equation \eqref{Landau-fuz} such that $\|f\|_{L^1(\Do)}=1$, $M_2(f)\le E_0$ and $\cH(f)\le H_0$. Let $A(|v-v_*|)=|v-v_*|^{2+\gamma}$.
 Let $\gamma\in(-3,-2)$ if $d=3$, and $\gamma\in[1-\sqrt{21},-2)$ if $d\ge 4$. Let $s>2$. 
Let $\kappa(x) \sim 1$ or $\kappa(x)=G(x)$. If $M_s(f_0)<+\infty$, then we have $M_s(f)\in L^\infty([0,T])$.

\end{lemma}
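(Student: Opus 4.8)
The plan is to close a Grönwall inequality for $M_s(f_t)$ starting from the moment identity \eqref{M-0} specialised to $A(|v-v_*|)=|v-v_*|^{2+\gamma}$, so that $A(|v-v_*|)/|v-v_*|^2=|v-v_*|^\gamma$. Relabelling $(x,v)\leftrightarrow(x_*,v_*)$ in the part of the bracket carrying $(d-1)(|v_*|^2-|v|^2)$ turns it into $\tfrac{s(d-1)}{2}\int_{\G}\kappa ff_*|v-v_*|^\gamma(|v_*|^2-|v|^2)\bigl(\langle v\rangle^{s-2}-\langle v_*\rangle^{s-2}\bigr)\dd\eta$, which is $\le0$ since $\langle v\rangle^{s-2}-\langle v_*\rangle^{s-2}$ and $|v|^2-|v_*|^2$ have the same sign (the usual dissipativity of the friction term). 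Hence only the Frobenius term survives,
\[
\frac{d}{dt}M_s(f_t)\lesssim_{s,d}\int_{\G}\kappa\, f f_*\,|v-v_*|^{\gamma}\langle v\rangle^{s-4}\bigl(|v|^2|v_*|^2-(v\cdot v_*)^2\bigr)\dd\eta ,
\]
and using $|v|^2|v_*|^2-(v\cdot v_*)^2=|v\wedge(v-v_*)||v_*\wedge(v-v_*)|\le\min(|v|,|v_*|)^2|v-v_*|^2$ reduces the effective singularity to $|v-v_*|^{2+\gamma}$, of order $\alpha:=|\gamma|-2$, which lies in $(0,2)$ exactly when $-4<\gamma<-2$; this is what $\gamma_d=\min(d,4)$ and the cut-off $\gamma<-2$ provide in both $d=3$ and $d\ge4$.

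Next I would split according to $|v-v_*|$ versus $\langle v\rangle$. On $\{|v-v_*|>\tfrac12\langle v\rangle\}$ one has $|v-v_*|^{2+\gamma}\lesssim\langle v\rangle^{2+\gamma}$ (as $2+\gamma<0$) and $\min(|v|,|v_*|)\le\langle v_*\rangle$, so this contribution is $\lesssim\int ff_*\langle v\rangle^{s+\gamma-2}\langle v_*\rangle^2\dd\eta=M_{s+\gamma-2}(f)M_2(f)$, controlled by moments of order $\le s$ (hence by $E_0(1+M_s(f))$, using $\|f\|_{L^1}=1$). On the complementary near-diagonal set $\langle v\rangle\sim\langle v_*\rangle$ and $\min(|v|,|v_*|)^2\lesssim\langle v\rangle^2$, so the remaining task is to bound
\[
\mathcal I:=\int_{\G\cap\{|v-v_*|\le\frac12\langle v\rangle\}}\kappa\, f f_*\,|v-v_*|^{-\alpha}\langle v\rangle^{s-2}\dd\eta ,\qquad \alpha=|\gamma|-2\in(0,2),
\]
which uses only $\kappa\le C$ and therefore treats $\kappa\sim1$ and $\kappa=G$ simultaneously.

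For $\mathcal I$ I would split the weight $\langle v\rangle^{s-2}\sim\langle v\rangle^{s-2-m}\langle v_*\rangle^{m}$, keep $\langle v\rangle^{s-2-m}f$ in $L^1_{x,v}$, and control the $v_*$-integral by the $x$-uniform convolution estimate of Lemma~\ref{lem:ineq}-(3),
\[
\mathcal I\lesssim M_{s-2-m}(f)\Bigl(\|\langle v\rangle^{m}f\|_{L^1_{x,v}}+\|\langle v\rangle^{m}f\|_{L^1_xL^p_v}\Bigr),\qquad p>\tfrac{d}{d-\alpha},
\]
which is admissible because $\alpha<2$ permits $p<k:=\tfrac{d}{d-2}$. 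The factor $\|\langle v\rangle^{m}f\|_{L^1_xL^p_v}$ is then interpolated between the entropy-dissipation bound $\|\langle v\rangle^{\gamma}f\|_{L^1_xL^k_v}\lesssim\cD(f)$ (Lemma~\ref{lem:sobo}-(1), i.e.\ \eqref{ineq:w:k}, itself a consequence of the Fisher-information estimate Lemma~\ref{lem:Des}) and a moment, giving $\|\langle v\rangle^{m}f\|_{L^1_xL^p_v}\lesssim\cD(f)^{\beta}M_\sigma(f)^{1-\beta}$ with $\beta=\tfrac{1-1/p}{1-1/k}>\tfrac{\alpha}{2}$. Collecting terms yields $\mathcal I\lesssim\cD(f)^{\beta}M_s(f)^{\mu}+M_s(f)^{\mu'}$ with $\mu'<1$, provided the free parameters $m,p$ are chosen so that (i) the convolution estimate applies ($\alpha<2$, i.e.\ $|\gamma|<4$); (ii) $\beta<1$, so that $\cD(f)^\beta\in L^1([0,T])$ by Hölder; and (iii) the auxiliary moment order stays $\le s$ and the resulting power $\mu$ of $M_s$ is $\le1$. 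For $d=3$ the relatively large exponent $k=3$ leaves enough slack for (i)--(iii) for every $\gamma\in(-3,-2)$, whereas for $d\ge4$, where $k=\tfrac{d}{d-2}\le2$, conditions (ii)--(iii) can only be met after a sharp choice of $m,p$, and eliminating the parameters leads precisely to the quadratic constraint $|\gamma|^2+2|\gamma|\le20$, i.e.\ $\gamma\ge1-\sqrt{21}$.

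Putting the pieces together gives $\tfrac{d}{dt}M_s(f_t)\le C\bigl(1+\cD(f_t)^{\beta}\bigr)\bigl(1+M_s(f_t)\bigr)$ with $\beta<1$ (one may, if convenient, also extract a genuine dissipative term $-c\,M_{s+\gamma}(f_t)$ from the good part of the bracket via the mass-concentration argument of Lemma~\ref{lem:coer} to absorb the lower-order moments, but this is not required for the statement). Since $f$ is an $\cH$-solution, $\int_0^T\cD(f_t)\,\dd t<+\infty$ and hence $\int_0^T\cD(f_t)^{\beta}\,\dd t<+\infty$, so Grönwall's inequality yields $M_s(f_t)\le C(T)\bigl(1+M_s(f_0)\bigr)$ for all $t\in[0,T]$, that is $M_s(f)\in L^\infty([0,T])$. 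As explained before the proof of Lemma~\ref{lem:M-1}, the formal test of \eqref{Landau-fuz} against $\langle v\rangle^s$ is justified by testing against $\langle v\rangle^s\chi(\varepsilon\langle v\rangle)$ and letting $\varepsilon\to0$. The main obstacle is the near-diagonal estimate for $\mathcal I$: one must carry the polynomial weight $\langle v\rangle^{s-2}$ through the singular convolution while keeping the dissipation exponent strictly below $1$ and the power of $M_s$ at most $1$, and it is exactly the competition of these requirements with the Sobolev exponent $k=\tfrac{d}{d-2}$ that forces the dimensional split and the threshold $1-\sqrt{21}$ in dimension $\ge4$.
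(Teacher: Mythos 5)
Your plan follows essentially the same route as the paper's proof: use the moment identity to reduce to $\frac{d}{dt}M_s(f)\lesssim\int ff_*|v-v_*|^{2+\gamma}\langle v\rangle^{s-2}\dd\eta$, split into a far-field part (controlled by moments) and a near-diagonal singular part, then control the singular part by combining the entropy-dissipation bound $\|\langle v\rangle^\gamma f\|_{L^1_xL^k_v}\lesssim\cD(f)$ from Lemma~\ref{lem:sobo}-(1) with a convolution argument and a weighted interpolation, finishing with Grönwall via $\int_0^T\cD(f_t)^\beta\dd t<\infty$. The minor technical variations (you split at $|v-v_*|\lessgtr\frac12\langle v\rangle$ and invoke Lemma~\ref{lem:ineq}-(3); the paper splits at $|v-v_*|\lessgtr1$ and applies Hölder+Young directly, then Lemma~\ref{lem:sobo}-(2)) are equivalent in spirit and both reduce to the same constraint $\beta>\frac{|\gamma|-2}{2}$ for the integrability of $|v|^{2+\gamma}$ on the ball.

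There are, however, two genuine issues. First, you never actually derive the quadratic constraint $|\gamma|^2+2|\gamma|\le20$; you only assert that ``eliminating the parameters leads precisely to'' it. That computation is the whole point of the lemma --- it is exactly what produces the exponent $1-\sqrt{21}$ in the statement --- and it is not routine: one must set up the interval $\big(\tfrac{|\gamma|-2}{2},\,\tfrac{s+2}{s+|\gamma|}\big)$ for $\beta$, track where the bootstrap in $s$ must start, and verify the interval is nonempty at each stage, which is precisely where the paper's argument lives. Without showing this calculation, the proof is incomplete at its crux.

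Second, your explanation of the $d=3$ versus $d\ge4$ dichotomy is off. You attribute it to $k=\frac{d}{d-2}$ being larger in $d=3$ and hence leaving ``more slack''. But the Young's-inequality constraint $\beta>\frac{k(|\gamma|-2)}{d(k-1)}=\frac{|\gamma|-2}{2}$ is $d$-independent for $d\ge3$, and so is $\beta<\frac{s+2}{s+|\gamma|}$; the same admissibility window applies in every dimension. The true reason the restriction $\gamma\ge1-\sqrt{21}$ does not appear for $d=3$ is simply that $\gamma_d=\min(d,4)$ forces $\gamma>-3$ when $d=3$, and $-3>1-\sqrt{21}$, so the threshold is automatically met; for $d\ge4$ one has $\gamma>-4$ and the threshold is genuinely binding. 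You should fix both the missing computation and this justification of the dimensional split.
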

\begin{proof}
 We follow \cite{Des15} and an appropriate modification will be made. 
 
We recall  \eqref{ap:Ms} and \eqref{ap:s}. 
Notice that
\begin{equation*}
  \frac{|\d_{i}\langle v\rangle^s-(\d_{i}\langle v\rangle^s)_*|}{|v-v_*|}\lesssim_{d,s} \langle v\rangle^{s-2}+\langle v_*\rangle^{s-2}.
\end{equation*}
By using of 
\begin{align*}
   |v-v_*| |b(v-v_*)|\lesssim A(|v-v_*|),
\end{align*}
we have
\begin{equation*}
\label{Ms-1}
    \frac{d}{dt}M_s(f)\lesssim \int_{\G}ff_* A(|v-v_*|)\langle v\rangle^{s-2}\dd\eta.
\end{equation*}

 We recall the estimate \eqref{Ms}
\begin{equation*}
     \begin{aligned}
     \frac{d}{dt}M_s(f_t)
   &\lesssim \int_{\G} f f_*|v-v_*|^{2+\gamma}\langle v\rangle^{s-2}\dd\eta.
\end{aligned}
\end{equation*}
We spilt the domain into two parts $\{|v-v_*|\ge1\}$ and $\{|v-v_*|<1\}$. Notice that 
\begin{equation*}
     \begin{aligned}
     \int_{\{|v-v_*|\ge1\}} f f_*|v-v_*|^{2+\gamma}\langle v\rangle^{s-2}\dd\eta\le M_{s-2}(f).
\end{aligned}
\end{equation*}

On the domain $\{|v-v_*|<1\}$, we use $\langle v\rangle^\alpha\langle v_*\rangle^{-\alpha}\le C$ for some $\alpha\in(0,2)$ to derive
\begin{equation*}
     \begin{aligned}
     &\int_{\Do\times\{|v-v_*|<1\}} f f_*|v-v_*|^{2+\gamma}\langle v\rangle^{s-2}\dd\eta\\
    \lesssim &{}\int_{\Do\times\{|v-v_*|<1\}} f f_*|v-v_*|^{2+\gamma}\langle v_*\rangle^{-\alpha}\langle v\rangle^{s-2+\alpha}\dd\eta.
\end{aligned}
\end{equation*}
By Hölder and Young's convolution inequalities with exponents $(q_1,q_2)$, we have
\begin{equation*}
     \begin{aligned}
     &\int_{\Do\times\{|v-v_*|<1\}} f f_*|v-v_*|^{2+\gamma}\langle v_*\rangle^{-\alpha}\langle v\rangle^{s-2+\alpha}\dd\eta\\
     \le&{}\int_{\Do}\|\langle v\rangle^{-\alpha} f(x_*,v)\|_{L^{q_1}_v}\|f(x,\cdot)\langle \cdot\rangle^{s-2+\alpha}*_v|\cdot|^{2+\gamma}\mathbb{1}_{\{|\cdot|\le1\}}\|_{L^{q_2}_v}\dd x_*\dd x\\
     \le&{}\|\langle v\rangle^{-\alpha} f\|_{L^1_xL^{q_1}_v}\||v|^{2+\gamma}\mathbb{1}_{\{|v|\le1\}}\|_{L^{q_2}}M_{s-2+\alpha}(f).
\end{aligned}
\end{equation*}
By Lemma \ref{lem:sobo}-$(2)$, for some $\beta\in(0,1)$, we have 
 \begin{align*}
   \|\langle v\rangle^{-\alpha} f\|_{L^1_xL^{q_1}_v}\le \|\langle v\rangle^\gamma f\|_{L^1_xL^k_v}^{\beta}M_{s-2+\alpha}(f)^{1-\beta},
\end{align*}
where $\frac{1}{q_1}=\frac{\beta}{k}+(1-\beta)$ and $\alpha=|\gamma|\beta+(\beta-1)s$. Notice that $|v|^{2+\gamma}\in L^{q_2}(B^R)$ if $\beta>\frac{|2+\gamma|}{2}$, and $\alpha<2$ if $\beta<\frac{s+2}{s+|\gamma|}$. Since we assume $|\gamma|\le 1-\sqrt{21}$, one can choose $\frac{|2+\gamma|}{2}<\beta<\frac{s+2}{s+|\gamma|}$, and the following estimates hold
\begin{equation*}
     \begin{aligned}
     \frac{d}{dt}M_s(f_t)
   &\lesssim M_{s-2+\alpha}(f)^{2-\beta}.
\end{aligned}
\end{equation*}
Since $s-2-\alpha<s$ and $M_2(f)\in L^\infty([0,T])$. By a bootstrap argument, we have $M_s(f)\in L^\infty([0,T])$.






\end{proof}

\subsection{\texorpdfstring{$L^p$}{TEXT} estimates}\label{sec:Lp}

In this section, we study the propagation of $L^p$-norms. In the homogeneous case, the propagation of moments has been studied in particular for dimension $d=3$ in, for example, \cite{DV00a,Wu14,FG09}, for hard and soft potential cases. We follow the argument for homogeneous cases with appropriate modifications for the spatial variable $x$ and all dimensions $d\ge2$.

We show a priori estimates by multiplying the fuzzy Landau equation \eqref{Landau-fuz} by $f^{p-1}$ 
 \begin{equation}
 \label{p}
       \frac{d}{dt}\|f\|_{L^p}^p=-\frac{p-1}{p}\int f^{p-2}(\nabla_v f)^T\bar a \nabla_v f+\int f^{p} \bar c.
    \end{equation}
Concerning the rigorous proof, one can derive the a priori estimates for the approximation system introduced in Section \ref{sec:sol}, and then pass to the limit. The main difficulty lies in applying the coercivity Lemma \ref{lem:coer} to the approximated kernel $a_n$, i.e., $(\nabla_v f)^T \bar a_n \nabla_v f \gtrsim_{|x|} \langle v \rangle^\gamma |\nabla_v f|^2$.
In the case $\kappa(x) = G(x)$, we rely only on the non-negativity $(\nabla_v f)^T \bar a \nabla_v f \ge 0$, and thus the a priori estimates can be directly applied to the approximation system \eqref{FL:app}.
In contrast, for $\kappa(x) \sim 1$, we require uniform coercivity of $\bar a^n$. To achieve this, we add the term $\int_{\R^d} \div_v(\lambda_n(|v-v_*|) \nabla_v f^n)\dd v_*$ to \eqref{FL:app}, which ensures the uniform coercivity of $\bar a^n + \lambda_n$, see, for example, \cite[Definition 6]{DV00a} for details.
For simplicity, we present only the a priori estimates here.

In the following, we do not distinguish the interaction kernels $A$ satisfying  \eqref{A-1}, \eqref{A-2} or \eqref{A-3}. We discuss the case of spatial kernels satisfying 
\begin{equation*}
\kappa(x)=G(x),\quad\kappa(x)\sim1\quad \text{and}\quad \kappa(x)\equiv1
\end{equation*}
in Lemma \ref{lem:Lp-1}, Lemma \ref{lem:Lp-2} and Lemma \ref{lem:k-1}, respectively. 

\begin{lemma}\label{lem:Lp-1}
Let $\kappa(x)=G(x)$. Let $d\ge 2$. Let $p\in[2,+\infty)$.  Let $f$ be a $\cH$-solution to the fuzzy Landau equation \eqref{Landau-fuz} such that $\|f\|_{L^1(\Do)}=1$, $M_2(f)\le E_0$ and $\cH(f)\le H_0$. Let $k=\frac{d}{d-2}$ for $d\ge 3$, and $k\in(1,\infty)$ for $d=2$.
    \begin{enumerate}[(1)]
    
      \item Let $\gamma=0$. If $f_0\in  L^p(\Do)$, then we have
     $f\in L^\infty([0,T];{L^p(\Do)})$.

     \item Let $\gamma\in[-2,0)$. Let $\varepsilon\in(0,\min(1,k-1))$. Let $M_r(f)\in L^1([0,T])$ with $r=-(k-1-\varepsilon)\gamma k/\varepsilon$. If $f_0\in  L^p(\Do)$, then we have
     $f\in L^\infty([0,T];{L^p(\Do)})$.

\item Let $\gamma\in(-\gamma_d,-2)$. On the domain $\Dot$, if $f_0\in  L^p(\Do)$, then there exists $T^*>0$ such that $f\in L^\infty([0,T];L^p(\Dot))$ for all $0<T<T^*$.
    \end{enumerate}
\end{lemma}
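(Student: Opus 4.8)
The plan is to work from the $L^p$ energy identity \eqref{p},
\[
\frac{d}{dt}\|f\|_{L^p}^p=-\frac{p-1}{p}\int f^{p-2}(\nabla_v f)^T\bar a\,\nabla_v f\,\dd x\dd v+\int f^{p}\,\bar c\,\dd x\dd v,
\]
and to handle the dissipative term and the $\bar c$-term separately in each of the three regimes. Since $\kappa=G(x)$ here, I will \emph{not} use the coercivity of $\bar a$; instead I use only $(\nabla_v f)^T\bar a\,\nabla_v f\ge 0$ so that the dissipative term can be dropped (up to keeping it for absorption in the soft cases). The first step, common to all three cases, is to bound $|\bar c|$ pointwise. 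From \eqref{abc} we have $c=-(d-1)\big((d-2)A(|z|)/|z|^2+A'(|z|)/|z|\big)$, which under \eqref{A-1}, \eqref{A-2} or \eqref{A-3} is controlled by $C|z|^\gamma$ near the origin and by $C\langle z\rangle^\gamma$ at infinity; hence $|\bar c(x,v)|\le C\big(G*_x(f*_v|\cdot|^{\gamma}\mathbb 1_{|\cdot|\le1})\big)+C\langle v\rangle^\gamma$ plus harmless bounded terms, using $\|G\|_{L^1}=1$ and the $L^1$-mass bound $\|f\|_{L^1}=1$. Thus everything reduces to estimating $\int f^p\,\bar c$.

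For case (1), $\gamma=0$: here $c$ is bounded (no singularity, no growth), so $|\bar c|\le C$ uniformly, giving $\frac{d}{dt}\|f\|_{L^p}^p\le C\|f\|_{L^p}^p$ and Grönwall yields $f\in L^\infty([0,T];L^p)$. For case (2), $\gamma\in[-2,0)$: the only issue is the local singularity $|z|^\gamma$, $\gamma\in[-2,0)$, which is integrable for $d\ge2$ but still needs care against the $f^p$ factor. I split $\int f^p\bar c$ over $\{|v-v_*|\le1\}$ and its complement; on the far region $|\bar c|\lesssim\langle v\rangle^\gamma\le1$. On the near region I write, by Hölder and Young's convolution inequality with a suitable pair, the contribution as controlled by $\|f\|_{L^1_xL^{k-\varepsilon}_v}$-type norms of one factor times $\|f^p\|$ of the other; more precisely I use $\int f^p\,(f*_v|\cdot|^\gamma\mathbb 1_{|\cdot|\le1})\lesssim \||\cdot|^\gamma\mathbb 1_{B^1}\|_{L^q}\,\|f\|_{L^1_xL^{k-\varepsilon}_v}\,\|f\|_{L^{p}}^{p}$ after interpolating the $f^p$-convolution factor, invoking Lemma \ref{lem:sobo}-(3) to turn $\|f\|_{L^1_xL^{k-\varepsilon}_v}$ into $\|\langle v\rangle^\gamma f\|_{L^1_xL^k_v}^{\theta}M_r(f)^{1-\theta}$ with $r=-(k-1-\varepsilon)\gamma k/\varepsilon$, and finally Lemma \ref{lem:sobo}-(1) bounds $\|\langle v\rangle^\gamma f\|_{L^1_xL^k_v}$ by $\cD(f)\in L^1([0,T])$. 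Collecting, $\frac{d}{dt}\|f\|_{L^p}^p\le h(t)\|f\|_{L^p}^p$ with $h\in L^1([0,T])$ (using $M_r(f)\in L^1([0,T])$ by hypothesis), and Grönwall closes the estimate. If the interpolation exponent on the $f^p$-factor is strictly less than $1$, one gets instead $\frac{d}{dt}\|f\|_{L^p}^p\le h(t)\|f\|_{L^p}^{p(1-\delta)}+C$, which is even easier.

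For case (3), $\gamma\in(-\gamma_d,-2)$ on $\Dot=\T^d\times\R^d$: now $c\sim|z|^\gamma$ with $\gamma<-2$ is \emph{not} locally integrable, so the above fails globally and one expects only a short-time bound. The idea is to close a nonlinear Grönwall inequality of the form $\frac{d}{dt}\|f\|_{L^p}^p\le C\|f\|_{L^p}^{p+\sigma}+C\|f\|_{L^p}^p$ for some $\sigma>0$, whose solutions blow up only after a time $T^*>0$ depending on $\|f_0\|_{L^p}$. Concretely, split again near/far; on the far region $|\bar c|\lesssim\langle v\rangle^\gamma\le1$ contributes $C\|f\|_{L^p}^p$. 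On the near region I estimate the singular convolution by Hölder in $v$ (on $\T^d$ the $x$-integral is over a bounded set, so no decay in $x$ is lost) and use a Sobolev/interpolation bound in $v$ — e.g. $\|f\|_{L^k_v}\lesssim\|f\|_{L^p_v}^{1-a}\|\nabla_v f^{p/2}\|_{L^2_v}^{2a/p}\cdots$ together with $\|\,|z|^\gamma\mathbb 1_{B^1}\|_{L^q}<\infty$ for a $q$ dictated by $\gamma>-\gamma_d\ge-4$ — to bound $\int f^p\bar c$ by $C\|f\|_{L^p}^{p}+\tfrac12\cdot\tfrac{p-1}{p}\int f^{p-2}(\nabla_v f)^T\bar a\nabla_v f$ plus, crucially, a term that is super-linear in $\|f\|_{L^p}^p$; the dissipative part absorbed here is legitimate since $\bar a\ge0$ gives a genuine nonnegative quadratic form in $\nabla_v f$ even without coercivity, but to extract a usable $\|\nabla_v f^{p/2}\|_{L^2}^2$ I instead keep the full $-\tfrac{p-1}{p}\int f^{p-2}(\nabla_vf)^T\bar a\nabla_vf$ term and use the coercivity Lemma \ref{lem:coer}—wait, $\kappa=G$ is not bounded below, so on $\T^d$ I should note $G(x)=k_1\exp(-k_2\langle x\rangle)\ge c>0$ is bounded below on the compact torus, recovering uniform coercivity $\bar a\gtrsim\langle v\rangle^\gamma$; this is the point where the $\Dot$ hypothesis is used. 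Then the standard Sobolev-interpolation argument of the homogeneous case (e.g. \cite{FG09}) applies verbatim to $g=f^{p/2}$, yielding $\frac{d}{dt}\|f\|_{L^p}^p+c\|\langle v\rangle^{\gamma/2}\nabla_v f^{p/2}\|_{L^2}^2\le C\|f\|_{L^p}^{1+\theta}$ for a suitable power, and a Bihari/Osgood-type comparison gives existence of $T^*=T^*(\|f_0\|_{L^p},d,\gamma)>0$ and $f\in L^\infty([0,T];L^p(\Dot))$ for $T<T^*$.

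The main obstacle is case (3): controlling the non-integrable singularity $|v-v_*|^\gamma$, $\gamma<-2$, in $\int f^p\bar c$. The resolution relies on (i) passing to the torus so that $G$ is bounded below and coercivity of $\bar a$ is available, (ii) using the coercive dissipation term to absorb the worst part of the singular term via a Sobolev embedding in $v$, and (iii) accepting a super-linear Grönwall inequality, which is exactly why the conclusion is only local in time. The condition $\gamma>-\gamma_d$ with $\gamma_d=\min(d,4)$ is what makes $|z|^\gamma\mathbb 1_{B^1}$ lie in the Lebesgue space dual to the one produced by the Sobolev embedding, so it enters precisely at step (ii).
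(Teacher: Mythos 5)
Your cases (1) and (2) are essentially the paper's argument: drop the dissipative term using only $\bar a\ge 0$ (the paper cites Lemma \ref{lem:coer} for nonnegativity but does not invoke coercivity when $\kappa=G$), bound $\bar c$ pointwise by $|v-v_*|^\gamma$, handle $\gamma=0$ by a straight Gr\"onwall, and handle $\gamma\in[-2,0)$ by splitting near/far, applying the convolution estimate of Lemma \ref{lem:sobo}-(5) to get $\int f^pf_*|v-v_*|^\gamma\lesssim\|f\|_{L^1_xL^{k-\varepsilon}_v}\|f\|_{L^p}^p$, and then Lemma \ref{lem:sobo}-(1),(3) together with the moment hypothesis $M_r\in L^1$ to get an $L^1$-in-time Gr\"onwall factor. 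That matches.

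Case (3) is where you depart from the paper, and it is where your sketch has a genuine gap. You propose to exploit that $G$ is bounded below on the compact torus to recover uniform coercivity $\bar a\gtrsim\langle v\rangle^\gamma$, then absorb the singular piece of $\int f^p\bar c$ into the dissipation $\|\langle v\rangle^{\gamma/2}\nabla_v f^{p/2}\|_{L^2}^2$ via H\"older against $|z|^\gamma\mathbb 1_{B^1}\in L^q$ and Sobolev embedding. But the exponents do not close: H\"older with $\||z|^\gamma\mathbb 1_{B^1}\|_{L^q}$, $q<d/|\gamma|$, leaves you to control $\|f^p\|_{L^{q'}_v}$ with $q'>d/(d-|\gamma|)$, while the Sobolev embedding applied to $g=\langle v\rangle^{\gamma/2}f^{p/2}$ only reaches $L^{2k}_v$ with $k=d/(d-2)$. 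Since $\gamma<-2$ forces $d/(d-|\gamma|)>d/(d-2)=k$, there is no admissible $q'$ in $[1,k]$, so the interpolation--absorption step fails precisely in the very soft range you are trying to treat. (The same exponent count is why the paper restricts $\gamma_d=\min(d,4)$ and why no coercivity is used in case (3).)

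The paper instead never touches the dissipative term in case (3). It uses Lemma \ref{lem:ineq}-(3) to bound $\sup_{v}\|f*_v|\cdot|^\gamma\|_{L^1_x}\lesssim\|f\|_{L^1}+\|f\|_{L^1_xL^p_v}$, pulls this $\sup$ out of the outer $\int f^p$ integral to get $\int_{\G}f^pf_*|v-v_*|^\gamma\dd\eta\lesssim\|f\|_{L^p}^p\,\|f\|_{L^1_xL^p_v}$, and then uses Minkowski plus H\"older on the compact factor $\T^d$ to bound $\|f\|_{L^1_xL^p_v}\le\|f\|_{L^p(\Dot)}$. This produces the superlinear ODE $\frac{d}{dt}\|f\|_{L^p}^p\le C\|f\|_{L^p}^{p+1}$ directly, and the $\arctan$/$\tan$ comparison gives the time $T^*$. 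So the torus hypothesis enters through the $L^p\hookrightarrow L^1$ embedding in $x$, not through lower-boundedness of $G$. You correctly identified that the superlinear Gr\"onwall is the target, but the mechanism you chose to produce it does not work; replacing the Sobolev-absorption step by Lemma \ref{lem:ineq}-(3) is the essential missing ingredient.
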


\begin{proof}
We recall \eqref{p} 
\begin{equation*}
    \begin{aligned}
       \frac{d}{dt}\|f\|_{L^p}^p&=-\frac{p-1}{p}\int f^{p-2}(\nabla_v f)^T\bar a \nabla_v f+\int f^{p} \bar c.\\
       &=:-A+B.
    \end{aligned}
    \end{equation*}
In the case of $\kappa(x)=G(x)$, Lemma \ref{lem:coer} ensures that $A\ge0$, and hence, we only need to search for upper bounds of $B$  
\begin{equation*}
    \label{A-B-1}
    \begin{aligned}
       &\frac{d}{dt}\int_{\Do}f^p\dd x\dd v\lesssim \int_{\Do}f^{p}f_*|v-v_*|^\gamma\dd x\dd v=B.
    \end{aligned}
    \end{equation*}
    \begin{enumerate}[(1)]
    \item In the case of $\gamma=0$, we have $|B|\lesssim \|f\|_{L^p(\Do)}^p$, and hence,  \begin{equation*}
    \begin{aligned}
       & \frac{d}{dt}\|f\|_{L^p(\Do)}^p\lesssim \|f\|_{L^p(\Do)}^p.
    \end{aligned}
    \end{equation*}
 Grönwall's inequality implies that \begin{align*}
\|f_t\|_{L^p(\Do)}\lesssim \|f_0\|_{L^p(\Do)}\exp(Ct).
\end{align*}
        \item 
  By using Lemma \ref{lem:sobo}-$(5)$, we have 
    \begin{align*}
    &\int_{\G} f^{p} f_*|v-v_*|^\gamma  \dd\eta \lesssim\|f\|_{L^1_xL^{k-\varepsilon}_v}\|f\|_{L^p(\Do)}^p
    \end{align*}
for some small $\varepsilon\in(0,1)$.
Hence, we have
\begin{equation*}
     \begin{aligned}
     \frac{d}{dt}\|f\|_{L^p(\Do)}^p
   &\lesssim \|f\|_{L^1_xL^{k-\varepsilon}_v}\|f\|_{L^p(\Do)}^p,
\end{aligned}
\end{equation*}
and
 Grönwall's inequality implies that
\begin{equation*}
     \begin{aligned}
     \|f\|_{L^p(\Do)}
   \lesssim \|f_0\|_{L^p(\Do)}\exp(C\int_0^t\|f\|_{L^1_xL^{k-\varepsilon}_v}\dd t).
\end{aligned}
\end{equation*}
The propagation of $M_r(f_0)<+\infty$ is ensured by Lemma \ref{lem:M-1}, combining with Lemma \ref{lem:sobo}, we have $\|f\|_{L^1_xL^{k-\varepsilon}_v}\in L^1([0,T])$, and hence, $f\in L^\infty([0,T];L^p(\Do))$.

\item We follow \cite[Proposition 3.4]{FG09}.
Lemma \ref{lem:ineq}-$(3)$ gives
\begin{align*}
    &\int_{\G}f^{p}f_*|v-v_*|^\gamma \dd \eta\\
    \le&{}C\|f\|_{L^p(\Do)}^p\sup_{v\in \R^d}\|f*_v|\cdot|^\gamma\|_{L^1_x}\\
    \le&{}C\|f\|_{L^p(\Do)}^p \|f\|_{L^1_xL^p_v}.
    \end{align*}
Hence, we have
\begin{align*}
    \frac{d}{dt}\|f\|_{L^p(\Do)}^p \le C\|f\|_{L^p(\Do)}^p \|f\|_{L^p_vL^1_x}.
\end{align*}

On the domain $\T^d\times\R^d$, by using the Minkowski and Hölder inequalities, we have
\begin{align*}
\|f\|_{L^p_vL^1_x} 
&=\Big(\int_{\R^d}\Big(\int_{\T^d}f\dd x\Big)^p\dd v\Big)^{\frac{1}{p}}\\
&\le\int_{\T^d}\Big(\int_{\R^d}f^p\dd v\Big)^{\frac{1}{p}}\dd v\\
&\le\Big(\int_{\T^d}\int_{\R^d}f^p\dd v\dd v\Big)^{\frac{1}{p}}=\|f\|_{L^p(\T^d\times\R^d)}.
\end{align*}
Hence, we have  
\begin{align*}
    \frac{d}{dt}\|f\|_{L^p(\T^d\times\R^d)}^p \le C\|f\|_{L^p(\T^d\times\R^d)}^{p+1},
\end{align*}
which implies that
\begin{align*}
    \frac{d}{dt}\|f\|_{L^p(\T^d\times\R^d)} \le C\|f\|_{L^p(\T^d\times\R^d)}^{2}.
\end{align*}

Hence, there exists $T^*=C^{-1}(\pi/2-\arctan\|f_0\|_{L^p(\Dot)})$, such that for all $t\in[0,T^*)$, we have $\|f_t\|_{L^p(\Dot)}\le \tan(\arctan \|f_t\|_{L^p(\Dot)}+Ct)$.
 \end{enumerate}
\end{proof}

\begin{lemma}\label{lem:Lp-2}
Let $\kappa(x) \sim 1$. Let $d\ge 2$. Let $p\in[2,+\infty)$.  Let $f$ be a $\cH$-solution to the fuzzy Landau equation \eqref{Landau-fuz} such that $\|f\|_{L^1(\Do)}=1$, $M_2(f)\le E_0$ and $\cH(f)\le H_0$.
    \begin{enumerate}[(1)]
      \item Let $\gamma\in(-2,0]$. If $f_0\in  L^p(\Do)$, then we have
     $f\in L^\infty([0,T];{L^p(\Do)})$ and $\langle v\rangle^{\frac{\gamma}{2}} \nabla_vf^{\frac{p}{2}}\in L^2([0,T]\times\Do) $. 

       \item Let $\gamma=-2$. Let $\varepsilon>0$. On the domain be $\T^d\times\R^d$, if $f_0\in  L^p\cap L\log L(\Dot)$ and $M_{2+\varepsilon}(f_0)<+\infty$, then we have
     $f\in L^\infty([0,T];{L^p(\Dot)})$ and $\langle v\rangle^{-1}  \nabla_vf^{\frac{p}{2}}\in L^2([0,T]\times\Dot) $.

    \end{enumerate}
\end{lemma}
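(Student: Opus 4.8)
The plan is to run an $L^{p}$ energy estimate starting from identity \eqref{p}, $\frac{d}{dt}\|f\|_{L^{p}}^{p}=-A+B$ with $A=\tfrac{p-1}{p}\int f^{p-2}(\nabla_{v}f)^{T}\bar a\,\nabla_{v}f$ and $B=\int f^{p}\bar c$, and to control $B$ by a small multiple of $A$ plus a multiple of $\|f\|_{L^{p}}^{p}$. Since $\kappa\sim1$, the coercivity bound \eqref{coe-2} of Lemma \ref{lem:coer} gives $A\gtrsim\int\langle v\rangle^{\gamma}|\nabla_{v}f^{p/2}|^{2}$, while from \eqref{abc} together with $\kappa\lesssim1$ one has $|B|\lesssim\int_{\G}|v-v_{*}|^{\gamma}f^{p}(x,v)f(x_{*},v_{*})\dd\eta$. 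The bridge between the two is the weighted Sobolev inequality of Lemma \ref{lem:sobo}-(1) applied with $\sqrt f$ replaced by $g=f^{p/2}$ (so that $\int g^{2}=\|f\|_{L^{p}}^{p}$), which yields $\|f^{p}\langle v\rangle^{\gamma}\|_{L^{1}_{x}L^{k}_{v}}\lesssim A+\|f\|_{L^{p}}^{p}$.

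For part (1) ($\gamma\in(-2,0]$) I would split the kernel in $B$ at a scale $\delta>0$. On $\{|v-v_{*}|>\delta\}$ it is bounded by $\delta^{\gamma}$, contributing $\lesssim\delta^{\gamma}\|f\|_{L^{1}}\|f\|_{L^{p}}^{p}=\delta^{\gamma}\|f\|_{L^{p}}^{p}$. On $\{|v-v_{*}|\le\delta\}$ one has $\langle v\rangle\sim\langle v_{*}\rangle$, hence $1\lesssim\langle v\rangle^{\gamma}\langle v_{*}\rangle^{-\gamma}$; combining this with Hölder's inequality in $v$ (exponents $k,k'$), Young's convolution inequality, and integration in $x,x_{*}$ using $\kappa\lesssim1$ and $-\gamma=|\gamma|\le2$, this contribution is $\lesssim\delta^{\gamma+d/k'}E_{0}\,\|f^{p}\langle v\rangle^{\gamma}\|_{L^{1}_{x}L^{k}_{v}}$. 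Because $\gamma>-2$ forces $\gamma+d/k'>0$ (for $d=2$ one picks $k>\tfrac{2}{2-|\gamma|}$), choosing $\delta$ small — depending only on $d,\gamma,p,E_{0},\|\kappa\|_{L^{\infty}}$ — absorbs this into a fraction of $A$. The outcome is $\frac{d}{dt}\|f\|_{L^{p}}^{p}\le-\tfrac12 A+C\|f\|_{L^{p}}^{p}\le C\|f\|_{L^{p}}^{p}$, whence $f\in L^{\infty}([0,T];L^{p}(\Do))$ by Grönwall; feeding this bound back and integrating in time gives $\int_{0}^{T}A\,\dd t<\infty$, i.e.\ $\langle v\rangle^{\gamma/2}\nabla_{v}f^{p/2}\in L^{2}([0,T]\times\Do)$.

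For part (2) ($\gamma=-2$) the exponent $\gamma+d/k'$ vanishes, so localising the kernel no longer produces a small factor, and this is the crux of the matter. My plan is first to upgrade the available integrability: since $\cH(f_{0})<\infty$ and the moments are bounded we have $\int_{0}^{T}\cD(f_{t})\,\dd t<\infty$, and then Lemma \ref{lem:Des} together with Lemma \ref{lem:sobo}-(1),(3) — the latter fed by the moment bounds $M_{s}(f)\in L^{\infty}([0,T])$ that Lemma \ref{lem:M-1} propagates from $M_{2+\varepsilon}(f_{0})<\infty$ — give $f\in L^{1}([0,T];L^{1}_{x}L^{1+\mu}_{v})$ for some $\mu>0$; on the torus one also has $\|f\|_{L^{p}_{v}L^{1}_{x}}\le\|f\|_{L^{p}(\Dot)}$ by Minkowski's inequality. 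Then I would integrate by parts in $v$ to trade the critical $\bar c$ for the milder drift $\bar b$, writing $B=-\tfrac2p\int f^{p/2}\,\bar b\cdot\nabla_{v}f^{p/2}$ with $|\bar b(x,v)|\lesssim\bigl((f*_{x}\kappa)*_{v}|\cdot|^{-1}\bigr)(x,v)$ (singularity $|z|^{-1}$, for which Lemma \ref{lem:sobo}-(5) with $\alpha=1\in(0,2)$ is available); using $\kappa\lesssim1$, $\|f\|_{L^{1}}=1$ and the propagated moment $M_{2+\varepsilon}$ to control the boundedness and the decay in $v$ of $(f*_{x}\kappa)*_{v}|\cdot|^{-1}$, and a weighted Cauchy--Schwarz inequality to split off a fraction of $A\gtrsim\int\langle v\rangle^{-2}|\nabla_{v}f^{p/2}|^{2}$, with the remainder treated via a Sobolev-type estimate (if needed its Lorentz refinement $\dot H^{1}_{v}\hookrightarrow L^{2d/(d-2),2}_{v}$), the bounds just listed, and the torus reduction. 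This should lead either to $\frac{d}{dt}\|f\|_{L^{p}(\Dot)}^{p}\le\Lambda(t)\,\|f\|_{L^{p}(\Dot)}^{p}$ with $\Lambda\in L^{1}([0,T])$, or to an Osgood-type inequality, and in either case to $f\in L^{\infty}([0,T];L^{p}(\Dot))$ and then, by integration, to $\langle v\rangle^{-1}\nabla_{v}f^{p/2}\in L^{2}([0,T]\times\Dot)$.

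As throughout this section, both estimates are to be proved first on the approximating system \eqref{FL:app} — with, when $\kappa\sim1$, the extra regularising term of \cite[Definition 6]{DV00a} added so that $\bar a^{n}$ is uniformly coercive — and then passed to the limit using the weak convergences established in the proof of Theorem \ref{thm:exist}. I expect the main obstacle to be precisely the absorption of the critical $\bar c$-term in part (2): the relevant estimate sits at the endpoint of the Sobolev--Hardy--Littlewood--Sobolev scale, which is exactly why the extra hypotheses ($L\log L$ and $M_{2+\varepsilon}(f_{0})<\infty$) and the compactness of the spatial torus are brought in.
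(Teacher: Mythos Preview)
For part~(1) your approach is correct and essentially equivalent to the paper's. Both use the coercivity bound \eqref{coe-2} to control $A$ from below, split the kernel in $B$ at a small scale, and absorb the near-diagonal piece into $A$ via the weighted Sobolev inequality applied to $g=f^{p/2}$. The paper implements the absorption through Pitt's inequality followed by a frequency split at $|\xi|=R$ (smallness from $R^{|\gamma|-2}\to0$), whereas you use a physical-side split at $|v-v_{*}|=\delta$ together with H\"older--Young (smallness from $\delta^{\gamma+d/k'}\to0$). Both close by Gr\"onwall; the Pitt route has the advantage that it degenerates to Hardy's inequality at $\gamma=-2$, which is precisely what the paper exploits in part~(2).

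Part~(2) has a genuine gap. After integrating by parts to $\bar b$ and applying weighted Cauchy--Schwarz you are left with $\int\langle v\rangle^{2}|\bar b|^{2}f^{p}$, and closing this requires $\sup_{v}\langle v\rangle|\bar b(x,v)|\in L^{1}_{t}$. But $|\bar b|\lesssim F*_{v}|\cdot|^{-1}$ does \emph{not} decay like $\langle v\rangle^{-1}$ from the available information: on the region $\{|v-v_{*}|\le1,\ |v_{*}|>|v|/2\}$ you would need a \emph{positively} weighted $L^{q}_{v}$ bound such as $\|\langle v_{*}\rangle F\|_{L^{q}_{v}}$ with $q>\tfrac{d}{d-1}$, and neither $M_{2+\varepsilon}$ nor the dissipation estimates of Lemmas~\ref{lem:Des}--\ref{lem:sobo} provide this; those lemmas only deliver negatively weighted bounds $\|\langle v\rangle^{\gamma}f\|_{L^{1}_{x}L^{k}_{v}}$ with $\gamma=-2$, which go the wrong way. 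The Sobolev/Lorentz refinement you invoke does not help either, since pairing $\langle v\rangle^{2}|\bar b|^{2}f^{p}=(\langle v\rangle^{-2}f^{p})(\langle v\rangle^{4}|\bar b|^{2})$ would require $\langle v\rangle^{2}|\bar b|\in L^{d}_{v}$, i.e.\ far stronger decay of $\bar b$ than is true.

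The paper's argument for $\gamma=-2$ is different and is exactly where the extra hypotheses enter. One keeps $\int|v-v_{*}|^{-2}f^{p}f_{*}$, uses $\langle v\rangle\sim\langle v_{*}\rangle$ on $\{|v-v_{*}|\le1\}$, and splits according to the \emph{level sets of $f_{*}$}: on $\{f_{*}\le e^{M}\}$ one gets $\lesssim e^{M}\|f\|_{L^{p}}^{p}$, while on $\{f_{*}>e^{M}\}$ Hardy's (Pitt's) inequality $\int|v-v_{*}|^{-2}(\langle v\rangle^{-1}f^{p/2})^{2}\dd v\lesssim\|\nabla_{v}(\langle v\rangle^{-1}f^{p/2})\|_{L^{2}_{v}}^{2}$ yields a term of size $\big(\int_{\{f>e^{M}\}}\langle v\rangle^{2}f\big)\cdot(A+\|f\|_{L^{p}}^{p})$. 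The smallness needed for absorption comes from equi-integrability via $L\log L$ (so $\int_{\{f>e^{M}\}}f$ is small for $M$ large) combined with tightness from $M_{2+\varepsilon}(f)\in L^{\infty}_{t}$ (so one may first cut $\{|v|>R\}$). In your scheme these two hypotheses are never used in a decisive way, which is a sign that the mechanism is missing.
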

\begin{proof}
We recall \eqref{p} that 
\begin{equation*}
    \label{A-B-2}
    \begin{aligned}
       & p^{-1}\frac{d}{dt}\int_{\Do}f^p\dd x\dd v=-(p-1)\int_{\Do}f^{p-2}(\nabla_v f)^T\bar a \nabla_v f\dd x\dd v\\
       &+p^{-1}\int_{\Do}f^{p} \bar c\dd x\dd v=:-A+B.
    \end{aligned}
    \end{equation*}
We follow \cite[Theorem 2]{Wu14} with appropriate modifications for the spatial
variable $x\in\R^d$ and $d\ge 2$.

    \begin{enumerate}[(1)]

       \item  
       
    Concerning the term $A$, by using coercivity Lemma \ref{lem:coer}, we have
    \begin{equation}
   \label{A:coe-0}
    \begin{aligned}
       A&\ge C_{coe}\int_{\Do}\langle v\rangle^\gamma f^{p-2}\big|\nabla_v f\big|^2\dd x\dd v\\
       &=\frac{4C_{coe}}{p^2}\|\langle v\rangle^{\frac{\gamma}{2}}\nabla_v f^{\frac{p}{2}}\|_{L^2(\Do)}^2.
    \end{aligned}
    \end{equation}
Since $|\nabla_v\big(\langle v\rangle^{\frac{\gamma}{2}}f^{\frac{p}{2}}\big)|\lesssim \langle v\rangle^{\frac{\gamma}{2}}|\nabla_v f^{\frac{p}{2}}|+\langle v\rangle^{\frac{\gamma}{2}-1}f^{\frac{p}{2}}$, we have     
    \begin{equation*}
    \begin{aligned}
       A\gtrsim\|\nabla_v\big(\langle v\rangle^{\frac{\gamma}{2}}f^{\frac{p}{2}}\big)\|_{L^2(\Do)}^2-\|f\|_{L^p(\Do)}^p.
    \end{aligned}
    \end{equation*}
    
    To show the upper bound of $|B|$, we only need to show the upper bounds of $\int_{\G} f^p f_*|v-v_*|^\gamma\dd \eta$.
    
    We split the domain $\Do=\{|v-v_*|\le 1\}\cup \{|v-v_*|>1\}$. On the domain $ \{|v-v_*|>1\}$, we have
    \begin{align*}
    \int_{\Do\times\{|v-v_*|>1\}} f^{p} f_*|v-v_*|^\gamma  \le   \|f\|_{L^p(\Do)}^{p}.
    \end{align*}

On the  domain $\{|v-v_*|\le 1\}$, by using of  $\langle v_*\rangle^{\gamma}\langle v\rangle^{-\gamma}\le C$, we have 
  \begin{align*}
  &\int_{\Do\times\{|v-v_*|\le 1\}} f^p f_*|v-v_*|^\gamma \dd\eta \\
  \lesssim&{}\int_{\Do} \langle v_*\rangle^{-\gamma} f_*\Big(\int_{\R^d\times \{|v-v_*|\le 1\}}|v-v_*|^\gamma(\langle v\rangle^{\frac{\gamma}{2}}f^{\frac{p}{2}})^2\dd x\dd v\Big) \dd x_*\dd v_*\\
   \le &{} \|\langle v\rangle^{-\gamma} f\|_{L^1(\Do)}\sup_{v_*\in\R^d}\int_{\R^d\times\{|v-v_*|\le 1\}}|v-v_*|^\gamma (\langle v\rangle^{\frac{\gamma}{2}}f^{\frac{p}{2}})^2\dd x\dd v.
  \end{align*}

  We denote $\hat f$ the Fourier transform in $v$
  \begin{equation*}
      \hat f(x,\xi)=(2\pi)^{-\frac{d}{2}}\int_{\R^d}f(x,v)e^{-iv\cdot\xi}\dd v.
  \end{equation*}
We use Pitt's inequality, see for example \cite{ Bec08, Bec08b, Bec12}, to derive, for all $v_*\in\R^d$
\begin{equation}
\label{soft-pitt}
  \begin{aligned}
  &\int_{\Do}|v-v_*|^\gamma\big(\langle v\rangle^{\frac{\gamma}{2}}f^{\frac{p}{2}})^2\dd v\dd x\\
  =&{}\int_{\Do}|v|^\gamma\big(\langle v+v_*\rangle^{\frac{\gamma}{2}}f^{\frac{p}{2}}(v+v_*))^2\dd v\dd x\\
  \le&{}C_{pitt}\int_{\Do}|\xi|^{-\gamma}\Big|\widehat{\langle v\rangle^{\frac{\gamma}{2}}f^{\frac{p}{2}}}\Big|^2\dd \xi\dd x.
  \end{aligned}
  \end{equation}

We split the domain $\R^d=\{|\xi|\le R\}\cup \{|\xi|> R\}$ for some $R>0$. On  $\{|\xi|\le R\}$, by  Parseval's theorem, we have
    \begin{align*}
 \int_{\R^d\times\{|\xi|\le R\}}|\xi|^{-\gamma}\Big|\widehat{\langle v\rangle^{\frac{\gamma}{2}}f^{\frac{p}{2}}}\Big|^2\dd \xi\dd x\le R^{|\gamma|}\|\langle v\rangle^{\frac{\gamma}{2}}f^{\frac{p}{2}}\|_{L^2(\Do)}^2\le R^{|\gamma|}\|f\|_{L^p(\Do)}^p.
  \end{align*}
 Similarly, on $\{|\xi|\ge R\}$, we have
    \begin{align*}
  &\int_{\R^d\times\{|\xi|> R\}}|\xi|^{-\gamma}\Big|\widehat{\langle v\rangle^{\frac{\gamma}{2}}f^{\frac{p}{2}}}\Big|^2\dd \xi\dd x\\
  \le&{} R^{|\gamma|-2}\int_{\Do}|\xi|^{2}\Big|\widehat{\langle v\rangle^{\frac{\gamma}{2}}f^{\frac{p}{2}}}\Big|^2\dd \xi\dd x\\
 =&{} R^{|\gamma|-2}\|\nabla_v(\langle v\rangle^{\frac{\gamma}{2}}f^{\frac{p}{2}})\|_{L^2(\Do)}^2.
  \end{align*}

 Hence, we have 
\begin{align*}
    \frac{d}{dt}\|f\|_{L^p(\Do)}^p+(1-CR^{|\gamma|-2})\|\nabla_v(\langle v\rangle^{\frac{\gamma}{2}}f^{\frac{p}{2}})\|_{L^2(\Do)}^2\le CR^{|\gamma|}\|f\|_{L^p(\Do)}^p.
\end{align*}
We choose $R$ large enough such that $CR^{|\gamma|-2}<1$.  Grönwall's inequality implies that $f\in L^\infty([0,T];L^p(\Do))$ and $\nabla_v(\langle v\rangle^{\frac{\gamma}{2}}f^{\frac{p}{2}})\in L^2([0,T]\times\Do)$.

\item 
 We follow the proof of the case  $\gamma\in(-2,0)$, 
 the only difference is to treat the upper bounds of  $\int_{\T^{2d}\times\{|v-v_*|\le 1\}} f^p f_*|v-v_*|^\gamma \dd\eta$ differently. We split the domain into $\Omega_1=(\T^{2d}\times \{|v-v_*|\le 1\})\cap \{\log f_*\le M\}$ and $\Omega_2=(\T^{2d}\times \{|v-v_*|\le 1\})\cap \{\log f_*> M\}$ to estimate 
\begin{equation}
\label{omega-1:2}
\int_{\Omega_i} |v-v_*|^{-2}f^p f_*\dd\eta
\end{equation}
The constant $M>0$ is to be determined later.
On $\Omega_1$, we have
\begin{align*}
&\int_{\Omega_1} |v-v_*|^{-2}f^p f_*\dd\eta\\
\le&{}e^M\int_{\{|v|\le1\}} |v|^{-2}\dd v\int_{\T^{d}\times\R^d}f^p\dd x\dd v\lesssim e^M\|f\|_{L^p(\T^d\times\R^d)}^p.
\end{align*}

By using $\langle v_*\rangle^2\langle v\rangle^2\le C$ on  $\{|v-v_*|\le1\}$, on $\Omega_2$, we have
\begin{equation}
\label{omega-2}
\begin{aligned}
&\int_{\Omega_2} |v-v_*|^{-2}f^p f_*\dd\eta\\
\le&{}\int_{\{\log f> M\}}\langle v\rangle^2f\dd x\dd v\Big(\sup_{v_*\in\R^d}\int_{\T^d\times\R^d} |v-v_*|^{-2}(\langle v\rangle^{-1}f^{\frac{p}{2}} )^2\dd x\dd v\Big).
\end{aligned}
\end{equation}
Similar to \eqref{soft-pitt}, we use Pitt's and Parseval's theorems to derive 
\begin{align*}
&\int_{\T^d\times\R^d} |v-v_*|^{-2}(\langle v\rangle^{-1}f^{\frac{p}{2}} )^2\dd x\dd v\\
\le&{}C_{pitt}\int_{\T^d\times\R^d} |\xi|^{2}|\langle v\rangle^{-1}f^{\frac{p}{2}} |^2\dd \xi\dd x\\
=&{}C_{pitt}\|\nabla_v(\langle v\rangle^{-1}f^{\frac{p}{2}}) \|_{L^2(\T^d\times\R^d)}^2,
\end{align*}
which can be bounded by $A+\|f\|_{L^p}^p$ in \eqref{A-2} once  $\int_{\{f>e^M\}}\langle v\rangle^2f\dd x\dd v$ small enough. 

Indeed, $\int_{\{f>e^M\}}\langle v\rangle^2f\dd x\dd v$ is small when $M$ is large enough. Notice that $f_t$ is equi-integrability with respect to $t\in[0,T]$, since 
\begin{align*}
   0\le  \int_{\Do} f(\log f)^+\dd x\dd v\le H_0+E_0.
\end{align*}
On the other hand, $|\{f\ge e^M\}|\le e^{-M}$ ensures that for any $\varepsilon>0$, we choose $M$ large enough, such that
\begin{equation}
\label{-2:1}
    \int_{\{f_t\ge e^M\}} f_t\le \varepsilon\quad\forall t\in[0,T].
\end{equation}

In Lemma \ref{lem:M-1}, we show that for any $\varepsilon'>0$, $M_{2+\varepsilon'}(f)\in L^\infty([0,T])$ if $M_{2+\varepsilon'}(f_0)<+\infty$. Let $B^R_v=\{|v|\le R\}$. the bounded $2+\varepsilon'$-moment ensures that
\begin{equation}
\label{-2:2}
    \int_{\T^d\times (B^R_v)^c}\langle v\rangle^2f_t\dd x\dd v\le CR^{-\varepsilon'}\quad\forall t\in[0,T].
\end{equation}
Combining estimates \eqref{-2:1} and \eqref{-2:2}, we have 
\begin{align*}
    \int_{\{f_t\ge e^M\}}\langle v\rangle^2 f_t\dd x\dd v\le CR^{-\varepsilon'}+2R^2\varepsilon\quad\forall t\in[0,T].
\end{align*}
We fix $M$ and $R$ large such that the right-hand side of the above inequality is small to be controlled by  $A+\|f\|_{L^p}^p$ in \eqref{A-2}.
We conclude that
\begin{equation*}
\label{e:M}
    \frac{d}{dt}\|f\|_{L^p(\T^d\times\R^d)}^p+C\|\nabla_v(\langle v\rangle^{-1}f^{\frac{p}{2}})\|_{L^2(\T^d\times\R^d)}^2\lesssim e^M\|f\|_{L^p(\T^d\times\R^d)}^p.
\end{equation*}
 Grönwall's inequality implies that 
\begin{equation*}
\label{exp}
\|f\|_{L^p(\T^d\times\R^d)}^p+C\int_0^t\|\nabla_v(\langle v\rangle^{-1}f^{\frac{p}{2}})\|_{L^2(\T^d\times\R^d)}^2\lesssim \|f_0\|_{L^p(\T^d\times\R^d)}^p\exp(Ct).
\end{equation*}
\end{enumerate}
\end{proof}

We take the spatial kernel $\kappa\equiv1$ in the fuzzy Landau equation \eqref{Landau-fuz}. 
We integrate \eqref{Landau-fuz} over $x\in\R^d$ and define $F(v)=\int_{\R^d}f(x,v)\dd x$. Then $F(v)$, at least formally, satisfies the following homogeneous Landau equations 
\begin{equation}
    \label{Landau:homo}
    \begin{cases}
    \d_t F=Q_{\sf homo}(F,F),\\
F|_{t=0}=\int_{\R^d}f_0(x,v)\dd x,
    \end{cases}
\end{equation}
where
\begin{equation*}
\label{Qhomo}
Q_{\sf hom}(F,F)=\nabla_v \cdot\Big(\int_{\R^d}A(|v-v_*|)\big(F(v_*)\nabla_v F(v)-F(v)\nabla_{v_*} F(v_*)\big)\dd v_*\Big).   
\end{equation*}
 Notice that $F$ is indeed an $\cH$-solution to the homogeneous equation \eqref{Landau:homo} at least on the domain $\T^d\times\R^d$, where we parallelly define the $\cH$-solutions in the homogeneous cases as in Definition \ref{def:H}. Indeed, we can choose the test functions $\varphi=\varphi(v)$ in the weak formulation \eqref{FL:H:ab}. We define the entropy and entropy dissipation in the homogeneous case as follows
\begin{align*}
    \cH_{homo}(F)&=\int_{\R^d}F\log F\dd v\quad\text{and}\\
    \cD_{homo}(F)&=2\int_{\Do}A\frac{\big|\Pi_{(v-v_*)^\perp}(\nabla_v-\nabla_{v_*})(FF_*)\big|^2}{FF_*}\dd v_*\dd v.
\end{align*}
The convexity of $f\mapsto f\log f$ and the joint convexity of $(a,b)\mapsto\frac{|a|^2}{b}$, and Jensen's inequality ensures that 
\begin{equation}
\label{fuz-homo}
    \cH_{homo}(F)\le \cH(f)\quad\text{and}\quad  \cD_{homo}(F)\le\cD(f).
    \end{equation}


We show $L^p$ a priori estimates for in the case of $\kappa\equiv 1$ in Lemma \ref{lem:Lp-3}. We closely follow the homogeneous arguments in \cite{DV00a} and \cite{ALL15}, where in particular, $d=3$ has been discussed. For the reason of completeness, we show the detailed proof for all dimensions $d\ge 2$.
\begin{lemma}\label{lem:Lp-3}
Let $\kappa\equiv1$. Let $f$ be a $\cH$-solution to the fuzzy Landau equation \eqref{Landau-fuz} such that $\|f\|_{L^1(\Do)}=1$, $M_2(f)\le E_0$ and $\cH(f)\le H_0$.
\begin{enumerate}[(1)]
    \item Let $d\ge 2$. Let $\gamma\in(0,1]$. Let $p\in[2,+\infty)$. Let $f_0\in  L^p_vL^1_x$. In addition, in the case of $d\ge 5$, we assume $M_{\gamma(1+\frac{p-1}{p}\frac{d-2}{2})}(f_0)<+\infty$. Then we have
     $f\in L^\infty([0,T];L^p_vL^1_x)$ and $ \nabla_vf^{\frac{p}{2}}\in L^2([0,T]L^2_vL^1_x) $.
    \item Let $d\ge3$ and $\gamma\in(-\gamma_d,-2)$. Let $p>\frac{d}{2}$ and $s\ge 2+k(|\gamma|-2)$. There exists a $\delta>0$ such that, if $\|\langle v\rangle^sf_0\|_{L^p_vL^1_x}\le\delta$, then we have $\sup_{t\in[0,T]}\|\langle v\rangle^sf_0\|_{L^p_vL^1_x}\le\delta$.

\end{enumerate}

\end{lemma}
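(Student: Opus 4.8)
\emph{Reduction to the homogeneous equation.} Since $\kappa\equiv1$, the velocity marginal $F(t,v):=\int_{\Omega}f(t,x,v)\,\mathrm dx$ solves the homogeneous Landau equation \eqref{Landau:homo} with the same kernel $A$, and because $f\ge0$ one has the exact identities $\|f\|_{L^p_vL^1_x}=\|F\|_{L^p_v}$ and $\|\langle v\rangle^s f\|_{L^p_vL^1_x}=\|\langle v\rangle^s F\|_{L^p_v}$. As recalled above, $F$ is an $\cH$-solution with $\|F\|_{L^1}=1$, $M_2(F)=M_2(f)\le E_0$ and $\cH_{homo}(F)\le\cH(f)\le H_0$ on $\T^d\times\R^d$ by \eqref{fuz-homo}; moreover, since $\bar a(F)=F*_v a$ is $x$-independent here, the coercivity Lemma \ref{lem:coer} (case \eqref{coe:ass-2}) gives $\xi^T\bar a(v)\xi\ge c\langle v\rangle^\gamma|\xi|^2$. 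Thus both parts reduce to (weighted) $L^p$ a priori estimates for the homogeneous Landau equation, which I would run directly so as to cover all $d\ge2$.

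\emph{Part (1).} Testing \eqref{Landau:homo} against $F^{p-1}$ gives, as in \eqref{p}, $\tfrac{\mathrm d}{\mathrm dt}\|F\|_{L^p}^p=-A+B$ with $A\ge c\,\|\langle v\rangle^{\gamma/2}\nabla_v F^{p/2}\|_{L^2}^2\ge c\,\|\nabla_v F^{p/2}\|_{L^2}^2$ (using $\gamma>0$), and $B$ a constant multiple of $\int F^pF_*|v-v_*|^\gamma$ by \eqref{abc}. Using subadditivity $|v-v_*|^\gamma\le\langle v\rangle^\gamma+\langle v_*\rangle^\gamma$ (valid for $0<\gamma\le1$), $\|F\|_{L^1}=1$ and $M_\gamma(F)\le M_2(F)\lesssim1$, one gets $B\lesssim\int\langle v\rangle^\gamma F^p+\|F\|_{L^p}^p$. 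I would absorb $\int\langle v\rangle^\gamma F^p$ into $\tfrac c2A$: Sobolev embedding (Lemma \ref{lem:ineq}(1)) gives $\|F^{p/2}\|_{L^{2k}}^2\lesssim\|\nabla_v F^{p/2}\|_{L^2}^2\lesssim A$, and Hölder with conjugate exponents $(\alpha,\alpha')$, $\alpha=\tfrac{pk-1}{p(k-1)}$, yields $\int\langle v\rangle^\gamma F^p\le M_s(F)^{1/\alpha}\,\|F^{p/2}\|_{L^{2k}}^{2k/\alpha'}$ with $s=\gamma\alpha=\gamma\bigl(1+\tfrac{p-1}{p}\tfrac{d-2}{2}\bigr)$; since $\alpha'=\tfrac{pk-1}{p-1}>k$, one has $k/\alpha'<1$, so Young's inequality absorbs this term into $A$ up to a constant. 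When $d\le4$ (so $2k\ge4$) the Sobolev exponent is large enough that one may instead interpolate against $L^{2/p}_v$ (a moment of order $\le2$, hence free), and no extra hypothesis is needed. By Lemma \ref{lem:M-1}(4) the assumption $M_s(f_0)<+\infty$ propagates. We obtain $\tfrac{\mathrm d}{\mathrm dt}\|F\|_{L^p}^p+\tfrac c2A\lesssim1+\|F\|_{L^p}^p$, whence Grönwall gives $F\in L^\infty([0,T];L^p_v)$, and $\int_0^TA\,\mathrm dt<\infty$ gives $\nabla_v F^{p/2}\in L^2([0,T]\times\R^d)$; translating back through $F=\int_x f\,\mathrm dx$ yields the stated bounds.

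\emph{Part (2).} Now $\gamma<-2$, so $A(|z|)=|z|^{2+\gamma}$ is integrably singular at $z=0$ ($|2+\gamma|\in(0,2)$ as $\gamma_d<4$) and decays at infinity. I would test \eqref{Landau:homo} against $\langle v\rangle^{sp}F^{p-1}$ and work with $G:=\langle v\rangle^sF$. The diffusion term is coercive and controls, via Sobolev, a weighted $L^k_v$-norm of $F^p$ (namely $\|\langle v\rangle^{(sp+\gamma)/2}F^{p/2}\|_{L^{2k}}^2$) up to lower-order terms; the bad terms are the $\bar c$-contribution $\sim\int G^pF_*\langle v_*\rangle^{-s}|v-v_*|^{-|\gamma|}$, the $\bar b$-drift contribution, and the commutators produced by moving the weight $\langle v\rangle^{sp}$ past the second-order diffusion operator. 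I would estimate each of these by combining the Sobolev bound with the singular-kernel inequalities of Lemma \ref{lem:ineq}(2)--(3) and Lemma \ref{lem:sobo}(4)--(5): the hypothesis $p>\tfrac d2$ makes the Hölder/HLS exponents conjugate to $|z|^{-|\gamma|}$ near the origin admissible, while $s\ge2+k(|\gamma|-2)$ furnishes exactly the velocity decay needed to turn $\langle v_*\rangle^{-s}$ and the commutator factors into weighted $L^q_v$-bounds for $F$ that are controlled by $\|G\|_{L^p_v}$ with a gain. The point is that every bad term then carries an extra factor which is small when $\|G\|_{L^p_v}$ is small (an explicit positive power of $\|G\|_{L^p_v}$, or such a controlled $L^q_v$-norm of $F$), so that for $\|G\|_{L^p}\le\delta$ with $\delta$ small the coercive dissipation dominates and $\tfrac{\mathrm d}{\mathrm dt}\|G\|_{L^p}^p\le0$. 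A standard continuity/bootstrap argument then shows that $\|G_0\|_{L^p}\le\delta$ forces $\sup_{t\in[0,T]}\|G_t\|_{L^p}\le\delta$, i.e.\ $\sup_{t\in[0,T]}\|\langle v\rangle^s f_t\|_{L^p_vL^1_x}\le\delta$, which is the claim.

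\emph{Main obstacle.} For part (1) the essential step is the interpolation absorbing $\int\langle v\rangle^\gamma F^p$ into the dissipation together with the bookkeeping that pins down the moment order $\gamma(1+\tfrac{p-1}{p}\tfrac{d-2}{2})$ needed when $d\ge5$. For part (2) the delicate part is the weighted energy estimate: controlling the commutators between the growing weight $\langle v\rangle^{sp}$ and the singular diffusion operator, and checking that $p>d/2$ and $s\ge2+k(|\gamma|-2)$ are precisely what make all bad terms vanish faster than $\|G\|_{L^p}^p$ at small $\|G\|_{L^p}$, so that the bootstrap closes with the \emph{same} $\delta$.
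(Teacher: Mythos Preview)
Your Part (1) is essentially the paper's argument: reduce to the homogeneous equation for $F=\int_x f$, test by $F^{p-1}$, use coercivity on $A$, split $B$ via $|v-v_*|^\gamma\lesssim\langle v\rangle^\gamma+\langle v_*\rangle^\gamma$, and absorb $\int\langle v\rangle^\gamma F^p$ by the H\"older/Sobolev/Young interpolation that produces exactly the moment order $\gamma\frac{pk-1}{p(k-1)}=\gamma\bigl(1+\tfrac{p-1}{p}\tfrac{d-2}{2}\bigr)$. Your remark for $d\le4$ is slightly loose (the paper simply notes this moment order is $\le2$, hence bounded by $E_0$), but the content is right.

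Part (2) has a genuine gap. Your claim that ``every bad term carries an extra factor which is small when $\|G\|_{L^p}$ is small'' is false. When you commute $\langle v\rangle^{sp}$ past the diffusion and split $\{|v-v_*|\le\alpha\}\cup\{|v-v_*|>\alpha\}$, several contributions (the commutators $B_2,B_3$, the off-diagonal piece, and the conversion from $\langle v\rangle^{\gamma/2}\nabla_v h^{p/2}$ to $\nabla_v(\langle v\rangle^{\gamma/2}h^{p/2})$) produce plain $C\|h\|_{L^p}^p$ terms with \emph{no} extra small factor. The resulting differential inequality is of the form
\[
X'\le -\bigl(C_1-C_2 X^{\frac{2}{k(p-1)}}\bigr)Y+C_2 X,\qquad X=\|h\|_{L^p}^p,\ Y=\|\nabla_v(\langle v\rangle^{\gamma/2}h^{p/2})\|_{L^2}^2,
\]
and smallness of $X$ alone only gives $X'\le -\delta Y+C_2X$, which does not force $X'\le0$.

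What closes the bootstrap in the paper is a \emph{uniform positive lower bound} $Y\ge\sigma>0$, obtained as follows: Pitt's inequality gives $Y\gtrsim\int\langle v\rangle^\gamma h^p|v|^{-2}\,\mathrm dv$, and then mass/energy conservation guarantees $\int_{B_R}F\ge\tfrac12$ for some fixed $R$, which via H\"older with exponents $(p,p')$ yields a strictly positive lower bound for that weighted integral independent of $t$. Once $Y\ge\sigma$, the inequality becomes $X'\le -\delta\sigma+C_2X\le0$ provided $X(0)\le\min\bigl((\tfrac{C_1-\delta}{C_2})^{k(p-1)/2},\tfrac{\delta\sigma}{C_2}\bigr)$. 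This step is the heart of the argument and is absent from your sketch; the HLS/Lemma~\ref{lem:sobo} route you propose does not by itself produce such a lower bound on the dissipation.
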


\begin{proof}
\begin{enumerate}[(1)]
    \item 
We write $F(v)=\int_{\R^d}f(x,v)\dd x$, which satisfies the homogeneous Landau equation
\begin{equation}
    \label{HL:ab-1}
    \d_tF=\div_v\big(\bar a\nabla_v F-\bar b F\big),
\end{equation}
where $\bar a=a*F_*$ and $\bar b=b*F_*$.
We multiply \eqref{HL:ab-1} by $F^{p-1}$ and integrate over $\R^d$ to derive
\begin{equation}
    \label{A-B}
    \begin{aligned}
       & p^{-1}\frac{d}{dt}\int_{\R^d}F^p\dd v=-(p-1)\int_{\R^d}F^{p-2}(\nabla_v F)^T\bar a \nabla_v F\dd v\\
       &+p^{-1}\int_{\R^d}F^{p} \bar c\dd x\dd v=:-A+B.
    \end{aligned}
    \end{equation}
    
     We follow \cite[Theorem 5]{DV00a} with appropriate modifications to all dimensions $d\ge2$.
        We show the lower bound of $A$ and the upper bound of $B$.

   By using of the coercivity of $\bar a$ in Lemma \ref{lem:coer}, we have
   \begin{equation*}
   \label{A:coe-1}
    \begin{aligned}
       A&\ge C_{coe}\int_{\R^d}\langle v\rangle^\gamma F^{p-2}\big|\nabla_v F\big|^2\dd v\\
       &=\frac{4C_{coe}}{p^2}\|\langle v\rangle^{\frac{\gamma}{2}}\nabla_v F^{\frac{p}{2}}\|_{L^2_v}^2.
    \end{aligned}
    \end{equation*}
    
   Concerning the term $B$, we have
 \begin{equation*}
 \label{gamma:+:B}
    \begin{aligned}
       |B|&\lesssim \int_{\Do} F^{p} F_*|v-v_*|^\gamma\dd v\dd v_*\\
       &\lesssim E_0\|f\|_{L^p_v}^p+\int_{\R^d} F^{p}|v|^\gamma\dd v.
    \end{aligned}
    \end{equation*}
    We left to show the upper bound of  $\| f^{p}|v|^\gamma\|_{L^1_v}$.

By Sobolev embedding in Lemma \ref{lem:sobo}, we have
    \begin{align*}
     \|F\|_{L^{pk}_v}\le C\|\nabla_v F^{\frac{p}{2}}\|_{L^2_v}^{\frac{2}{p}},
    \end{align*}
    where $k\in[2,\infty)$ for $d=2$, and $k=\frac{d}{d-2}$ for $d\ge 3$.

    We write
    \begin{equation}
        \label{rmk}
        F^{p}|v|^\gamma = F^{\frac{pk(p-1)}{pk-1}}F^{\frac{p(k-1)}{pk-1}} |v|^\gamma.
    \end{equation}
   We use the Hölder inequality with exponents $\big(\frac{pk-1}{p-1}\big)^{-1}+\big(\frac{pk-1}{pk-p}\big)^{-1}=1$ to derive
   \begin{equation}
       \label{gamma+:holder}
   \begin{aligned}
      &\int_{\R^d} F^{p} |v|^\gamma\dd v\\
      \le{}&\Big(\int_{\R^d} F^{\frac{pk(p-1)}{pk-1}\frac{pk-1}{p-1}}\dd v\Big)^{\frac{p-1}{pk-1}}\Big(\int_{\R^d} \big(F^{\frac{p(k-1)}{pk-1}} |v|^\gamma\big)^{\frac{pk-1}{p(k-1)}}\dd v\Big)^{\frac{p(k-1)}{pk-1}}\\
      \le{}&\|F\|_{L^{pk}_v}^{\frac{pk(p-1)}{pk-1}}\| F |v|^{\frac{\gamma(pk-1)}{p(k-1)}}\|_{L^1_v}^{\frac{p(k-1)}{pk-1}}\\
      \lesssim{}& \|\nabla_vF^{\frac{p}{2}}\|_{L^2_v}^{\frac{2k(p-1)}{pk-1}}M_{\frac{\gamma(pk-1)}{p(k-1)}}(f)^{\frac{p(k-1)}{pk-1}}.
    \end{aligned}
    \end{equation}
By Young's inequality with exponents $\big(\frac{pk-1}{k(p-1)}\big)^{-1}+\big(\frac{pk-1}{k-1}\big)^{-1}=1$, we have
\begin{equation}
    \label{+:young}
   \begin{aligned}
      &\|\nabla_v F^{\frac{p}{2}}\|_{L^2_v}^{\frac{2k(p-1)}{pk-1}}M_{\frac{\gamma(pk-1)}{p(k-1)}}(f)^{\frac{p(k-1)}{pk-1}}\le \varepsilon\|\langle v\rangle^{\frac{\gamma}{2}}\nabla_vf^{\frac{p}{2}}\|_{L^2_v}^{2}+CM_{\frac{\gamma(pk-1)}{p(k-1)}}(f)^p
   \end{aligned}
   \end{equation}
  for some small $\varepsilon>0$. Notice that  when $2\le d\le 4$, we have $\frac{\gamma(pk-1)}{p(k-1)}\le2$, since  
   \begin{equation*}
       \frac{\gamma(pk-1)}{p(k-1)}\le 1+\frac{p-1}{p}\frac{1}{k-1}\le 1+\frac{1}{k-1}.
   \end{equation*}
   Otherwise, $M_{\frac{\gamma(pk-1)}{p(k-1)}}(f)\in L^\infty([0,T])$ has been showed in Lemma \ref{lem:M-1}.

We choose $\varepsilon$ small enough and combine the estimates of terms $A$ and $B$ to derive
  \begin{equation*}
  \label{tobdd:T}
       \frac{d}{dt}\|f\|_{L^p_vL^1_x}^p+\|\langle v\rangle^{\frac{\gamma}{2}}\nabla_vf^{\frac{p}{2}}\|_{L^2_vL^1_x}^{2}\lesssim \|f\|_{L^p_vL^1_x}^p+1.
    \end{equation*}
 Grönwall's inequality implies that \begin{align*}
\|f\|_{L^p_vL^1_x}\in L^\infty([0,T])\quad\text{and}\quad  \|\langle v\rangle^{\frac{\gamma}{2}}\nabla_vf^{\frac{p}{2}}\|_{L^2_vL^1_x}\in L^2([0,T]). 
\end{align*}

      \item 

      We follow \cite{ALL15} with appropriate modifications to all dimensions $d\ge3$ and $p>\frac{d}{2}$.
To show the weighted $L^p$-estimates, we define 
    \begin{equation*}
        h(v)\defeq\langle v\rangle^{s}F(v),
    \end{equation*}
    which satisfies the following equation
    \begin{equation}
        \label{FL:app:hn}
        \begin{aligned}
&\d_t h=\div_v\Big(\int_{\R^d} a(F_*\nabla vh-h\nabla_{v_*}h_*)\dd v_*\Big)\\
&\quad-\div_v\Big(\nabla\langle v\rangle^{s}\int_{\R^d} a FF_*\dd v_*\Big)-\nabla\langle v\rangle^{s}\cdot\int_{\R^d} a(F_*\nabla_vF-F\nabla_{v_*}F_*)\dd v_*.
        \end{aligned}
    \end{equation}
    We test \eqref{FL:app:hn} by $h^{p-1}$
    \begin{align*}
       &p^{-1}\frac{d}{dt}\int_{\R^d}h^p\dd v=-(p-1)\int_{\Do}F_*h^{p-2}(\nabla_vh)^T a\nabla_vh\dd v \dd v_*\\
       &+(p-1)\int_{\Do} h^{p-1}(\nabla_vh)^T a\nabla_{v_*}F_*\dd v\dd v_*\\
&+(p-1)\int_{\Do} F F_* h^{p-2}(\nabla_vh)^T a \nabla \langle v\rangle^{s}\dd v\dd v_*\\
&-\int_{\Do} F_*(\nabla\langle v\rangle^{s})^T a \nabla_v F\dd v_*\dd v\\
&+\int_{\Do}  F(h^n)^{p-1}(\nabla\langle v\rangle^{s})^Ta \nabla_{v_*}F_*\dd v_*\dd v\\
            &=:-A+B_1+B_2+B_3+B_4.
    \end{align*}
We observe that
\begin{equation*}
    |\nabla \langle v\rangle^s|\lesssim \langle v\rangle ^{s-1}\quad\text{and}\quad |\nabla^2\langle v\rangle^s|\lesssim \langle v\rangle^{s-2}.
\end{equation*}

Similarly to \eqref{A:coe-0}, the coercivity Lemma \ref{lem:coer} ensures the lower bounds on $A$
     \begin{align*}
       A\gtrsim \|\nabla_v(\langle v\rangle^{\frac{\gamma}{2}}h^{\frac{p}{2}})\|_{L^2(\R^d)}^2- \|h\|_{L^p(\R^d)}^p.
    \end{align*}

We search for the upper bounds of $|B_i|$, $i=1,\dots,4$.
By integration by parts in $v,v_*$, we have
\begin{align*}
      |B_1|&\lesssim \Big|\int_{\Do} (\nabla_vh^p)^Ta \nabla_{v_*}F_*\dd v_*\dd v\Big|\\
       &\lesssim\Big|\int_{\Do} h^p F_* c(v-v_*)\dd v_*\dd v\Big|\\
 &\lesssim \int_{\Do}|v-v_*|^\gamma h^pF_*\dd v_*\dd v.
    \end{align*}
Similarly, we have
    \begin{align*}
        |B_2|& \le\Big|\int_{\Do}(\nabla_v h^{p-1})^Ta(\nabla \langle v\rangle^{s})hF_*\langle v\rangle^{-s}\dd v_*\dd v\Big|\\
    &\lesssim\Big|\int_{\Do}(\nabla_vh^p)^T a\nabla_v \langle v\rangle^{s}F_*\langle v\rangle^{-s}\dd v_*\dd v\Big|\\
    &\lesssim\int_{\Do}|\nabla a|h^{p}F_*\langle v\rangle^{-s}|\nabla \langle v\rangle^{s}|\dd v_*\dd v\\
    &+\int_{\Do}ah^{p}F_*\big(|\nabla \langle v\rangle^{-s}||\nabla\langle v\rangle^{s}|+\langle v\rangle^{-s}|\nabla^2\langle v\rangle^{s}|\big)\dd v_*\dd v\\
    &\lesssim\int_{\Do}h^{p}F_*|v-v_*|^{\gamma}\langle v\rangle^{-2}\big(1+|v-v_*|\langle v\rangle\big)\dd v_*\dd v\\
    &\lesssim\int_{\Do}h^{p}F_*|v-v_*|^{\gamma}\dd v_*\dd v+\|h\|_{L^p}^p,
    \end{align*}
\begin{align*}
|B_3|&\lesssim \Big|\int_{\Do}(\nabla \langle v\rangle^{sp})^Ta(\nabla F^p)F_*\dd v_*\dd v\Big|\\
         &\le\int_{\Do}F_*F^p\big(|\nabla^2\langle v\rangle^{sp}|a+|\nabla\langle v\rangle^{sp}||\nabla a|\big)\dd v_*\dd v\\
         &\le \int_{\Do}F_*F^p\langle v\rangle^{sp}|v-v_*|^{\gamma}\big(\langle v\rangle^{-2}|v-v_*|^{2}+\langle v\rangle^{-1}|v-v_*|^{1}\big)\dd v_*\dd v\\
         &\lesssim \int_{\Do}h^p F_* |v-v_*|^{\gamma}\dd v_*\dd v+\|h\|_{L^p}^p,
    \end{align*}
and
    \begin{align*}
|B_4|&\lesssim\int_{\Do}F^pF_*|\nabla \langle v\rangle^{sp}||\nabla_v a|\dd v_*\dd v\lesssim |B_3|.
    \end{align*}

  Combining $A$ and $B$ terms, we have
\begin{align*}
    &\frac{d}{dt}\|h\|_{L^p}^p+ C\|\nabla_v(\langle v\rangle^{\frac{\gamma}{2}}h^{\frac{p}{2}})\|_{L^2}\lesssim \|h\|_{L^p}^p+\int_{\Do}|v-v_*|^\gamma h^pF_*\dd v_*\dd v.
\end{align*}

We only left to treat
\begin{align*}
    \int_{\Do}|v-v_*|^\gamma h^pF_*\dd v_*\dd v.
\end{align*}

For a fixed $\alpha>0$, we split the domain  $\Do=\{|v-v_*|\le \alpha \}\cup \{|v-v_*|>\alpha\}$. 
    
On the domain  $\{|v-v_*|>\alpha\}$, we have 
    \begin{align*}
    \int_{\{|v-v_*|>\alpha\}} |v-v_*|^\gamma h^pF_*\dd v_*\dd v  \le  \alpha^\gamma \|h\|_{L^p}^p \|F\|_{L^1}.
    \end{align*}

    We split the domain $\{|v-v_*|\le\alpha\}$ into two parts $\{F> F_*\}$ and $\{F\le  F_*\}$, then we have
    \begin{align*}
    &\int_{\{|v-v_*|\le\alpha\}} h^p|v-v_*|^\gamma F_*\dd v_*\dd v\\
    =&{}\int_{\{|v-v_*|\le\alpha\}}\langle v\rangle^{ps}|v-v_*|^\gamma F^pF_*\big(\mathbb{1}_{\{F>F_*\}}+\mathbb{1}_{\{F\le F_*\}}\big)\dd v_*\dd v\\
    \le&{}\int_{ \{|v-v_*|\le\alpha\}} \langle v\rangle^{ps}|v-v_*|^\gamma (F^{p+1}+F_*^{p+1})\dd v_*\dd v\\
    \lesssim&{}\||v|^{\gamma}\|_{L^1(B_\alpha)}\int_{\R^d}\langle v\rangle^{ps}F^{p+1} \dd v\\
    &+\int_{\{|v-v_*|\le\alpha\}}|v-v_*|^\gamma\langle v-v_*\rangle^{ps}\langle v_*\rangle^{ps}F_*^{p+1} \dd v_*\dd v\\
    \lesssim&{} (1+\alpha^{3+\gamma+ps})\int_{\R^d}\langle v\rangle^{ps}F^{p+1}\dd v,
    \end{align*}
    where we use the Peetre's inequality for the last inequality: For all $p\in \R$ and $x,\,y\in\R^d$, we have
   $\langle x\rangle^p\le 2^{\frac{|p|}{2}} \langle y\rangle^p \langle x-y\rangle^{|p|}$.

Then we are left to show the upper bounds $$\int_{\R^d}\langle v\rangle^{ps}F^{p+1}\dd v.$$
By Sobolev embedding in Lemma \ref{lem:sobo}, we have     \begin{equation}
\label{wl:sobo}
     \|\langle v\rangle^{\frac{\gamma+sp}{2}}F^{\frac{p}{2}}\|_{L^{2k}(\R^d)}\le C\|\nabla (\langle v\rangle^{\frac{\gamma+sp}{2}}F^{\frac{p}{2}})\|_{L^2(\R^d)},
    \end{equation}
    where $k$ is defined as in Lemma \ref{lem:sobo}.
We write 
\begin{align*}
\langle v\rangle^{ps}F^{p+1}= \big(\langle v\rangle^s F\big)^{\frac{p}{k(p-1)}}\big(\langle v\rangle^{\frac{\gamma+sp}{p}} F\big)^{p}\langle v\rangle^{\alpha}F^{\frac{pk-p-k}{k(p-1)}},
\end{align*}
where $\alpha=|\gamma|-\frac{ps}{k(p-1)}$. Notice that $pk-p-k>0$ since we assume $p>\frac{d}{2}=\frac{k}{k-1}$. Notice that $s\ge 2+k(|\gamma|-2)$ ensure that 
\begin{align*}
\alpha\le2\frac{pk-p-k}{k(p-1)}.
\end{align*}
Hence, we have 
\begin{align*}
\langle v\rangle^{ps}F^{p+1}\le \big(\langle v\rangle^s F\big)^{\frac{2p}{k(p-1)}}\big(\langle v\rangle^{\frac{\gamma+sp}{pk}} F\big)^{p}\big(\langle v\rangle^2F\big)^{\frac{(k-2)p-k}{k(p-1)}}.
\end{align*}

By the Hölder inequality with exponents $\frac{2}{k(p-1)}+\frac{2}{k}+\frac{k-p-k}{k(p-1)}=1$ and Sobolev inequality \eqref{wl:sobo},
we have 
\begin{align*}
    &\int_{\R^d}\langle v\rangle^{ps}F^{p+1}\dd v\\
    \le&{} \Big(\int_{\R^d}(\langle v\rangle^{s}F)^{p}\dd v\Big)^{\frac{2}{k(p-1)}}\Big(\int_{\R^d}(\langle v\rangle^{\frac{\gamma+sp}{pk}} F)^{\frac{kp}{2}}\dd v\Big)^{\frac{2}{k}}E_0^{\frac{(k-2)p-k}{k(p-1)}}\\
     \lesssim&{}
    \|\langle v\rangle^{s}F\|_{L^p}
    ^{\frac{2p}{k(p-1)}}\|\langle v\rangle^{\frac{\gamma+sp}{2}} F^{\frac{p}{2}}\|_{L^k}^2 \\
\lesssim&{}  \|\langle v\rangle^{s}F\|_{L^p}
    ^{\frac{2p}{k(p-1)}}\|\nabla(\langle v\rangle^{\frac{\gamma+sp}{2}} F^{\frac{p}{2}})\|_{L^2}^2.
\end{align*}

Hence, we have 
\begin{equation}
\label{wl:gon}
    \frac{d}{dt}\|h\|_{L^p}^p+C_1\|\nabla_v(\langle v\rangle^{\frac{\gamma}{2}}h^{\frac{p}{2}})\|_{L^2}^2\le  C_2\|h\|_{L^p}
    ^{\frac{2p}{k(p-1)}}\|\nabla_v(\langle v\rangle^{\frac{\gamma}{2}} h^{\frac{p}{2}})\|_{L^2}^2+C_2\|h\|_{L^p}^p
\end{equation}
for some constants $C_1,C_2>0$.
Let $X=\|h\|_{L^p}^p$ and $Y=\|\nabla_v(\langle v\rangle^{\frac{\gamma}{2}}h^{\frac{p}{2}})\|_{L^2}^2$. 
Then \eqref{wl:gon} can be written as
\begin{align*}
    &\frac{d}{dt}X\le -(C_1-C_2X^{\frac{2}{k(p-1)}})Y+C_2 X.
\end{align*}
We choose the initial value $X(0)=\|h_0\|_{L^p}^p$ small such that the right-hand-side of the above inequality is negative. 

For fixed $\delta\in(0,C_1)$, we first choose $0\le X(0)\le \big(\frac{C_1-\delta}{C_2}\big)^{\frac{k(p-1)}{2}}$ such that $ C_1-C_2X^{\frac{2}{k(p-1)}}\ge\delta$ and 
\begin{align*}
   \frac{d}{dt}X\le -\delta Y+C_2 X.
\end{align*}

Then we search for a lower bound of $Y$. By Pitt's inequality as in \eqref{soft-pitt}, we have
\begin{align*}
    Y=\|\nabla_v(\langle v\rangle^{\frac{\gamma}{2}}h^{\frac{p}{2}})\|_{L^2}^2\gtrsim \int_{\R^d}\frac{\langle v\rangle^\gamma h^p}{|v|^2}\dd v.
\end{align*}
The bounded energy $E_0$ ensures that for large enough $R>0$, we have 
\begin{align*}
    \int_{B_R}F(v)\dd v\ge \|F\|_{L^1}-\frac{E_0}{R^2}\ge \frac12.
\end{align*}
By Hölder inequality with exponents $(p,p')$, we have 
\begin{align*}
 \int_{B_R}F(v)\dd v
 &=\int_{B_R}\langle v\rangle^{-s}\langle v\rangle^{-\frac{\gamma}{p}}|v|^{\frac{2}{p}} \langle v\rangle^{\frac{\gamma}{p}}|v|^{-\frac{2}{p}} F(v)\dd v\\
 &\le\||v|^{\frac{2}{p}}\|_{L^{p'}(B_R)}\Big(\int_{B_R}\frac{\langle v\rangle^\gamma h^p}{|v|^2}\dd v\Big)^{\frac{1}{p}}.
\end{align*}
We write $Y\ge \sigma>0$. Hence, we have
\begin{align*}
   \frac{d}{dt}X\le -\delta \sigma+C_2 X.
\end{align*}
Finally, we choose $0\le X(0)\le \frac{\delta\sigma}{C_2}$ small such that $-\delta \sigma+C_2 X(0)\le 0$. We conclude that $X'(t)\le0$ and 
$0\le X(t)\le \min\big(\big(\frac{C_1-\delta}{C_2}\big)^{\frac{k(p-1)}{2}},\frac{\delta\sigma}{C_2}\big)$. 

\end{enumerate}

\end{proof}
\begin{remark}
    This method in Lemma \ref{lem:Lp-3} does not work straightforwardly for showing $\|f\|_{L^p(\Do)}$ of the fuzzy Landau equation with $\kappa\not\equiv 1$. For example, as \eqref{rmk} in the proof of Lemma \ref{lem:Lp-3}-$(1)$, we write, for a solution $f$ to a fuzzy Landau equation,
    \begin{align*}
        |v|^\gamma f^p=(\langle v\rangle^\gamma f^p)^{\frac{k}{q_1}}(\langle v\rangle^rf)^{\frac{1}{q_2}},
    \end{align*}
    where
    \begin{equation*}
        \frac{1}{q_1}+\frac{1}{q_2}=1, \quad  \frac{pk}{q_1}+\frac{1}{q_2}=p,\quad  \frac{k}{q_1}+\frac{rk}{q_2}=\gamma.
    \end{equation*}
We use Hölder inequality with exponents $(q_1,q_2)$ and Young's inequalities with expoents $(q_1/k,q_3)$ as in \eqref{gamma+:holder} and \eqref{+:young} to derive 
    \begin{equation*}
        \begin{aligned}
             &\int_{\Do} f^{p} |v|^\gamma\dd v \dd x\\
      \lesssim{}& \int_{\R^d}\|\nabla_v(\langle v\rangle^{\frac{\gamma}{2}}f^{\frac{p}{2}})\|_{L^2_v}^{\frac{2k}{q_1}}T_{rq_2}(f)^{\frac{1}{q_2}}  \dd x   \\
      \lesssim{}& \varepsilon\|\nabla_v(\langle v\rangle^{\frac{\gamma}{2}}f^{\frac{p}{2}})\|_{L^2(\Do)}+\int_{\Do}T_{rq_2}(f)^{\frac{q_3}{q_2}}  \dd x,   
        \end{aligned}
    \end{equation*}
    where we define $T_r(f)=\int_{\R^d}\langle v\rangle^rf\dd v$. Since $\frac{1}{q_1}+\frac{1}{q_2}=1$ and $\frac{k}{q_1}+\frac{1}{q_3}=1$, we have $\frac{q_3}{q_2}>1$. To show the $L^p$ estimate, we need to show at least $T_{s}(f)\in L^1([0,T];L^q_x)$ for some $q>1$. This is difficult since for the equation of $T_s(f)$, the transportation term does not vanish. For $s=0$, at least formally, we have the local mass conservation law
    \begin{align*}
        \frac{d}{dt} T_0(f)+\div_x\Big(\int_{\R^d} vf\dd v\Big)=0.
    \end{align*}
\end{remark}

\begin{remark}\label{lem:k-1}[A quantitative estimate of Lemma \ref{lem:Lp-2}-$(2)$]
In the proof Lemma \ref{lem:Lp-2}-$(2)$, we fixed the upper bound of time $t\in[0,T]$, and choose $M$ defined in \eqref{omega-1:2} large enough to ensure the smallness of $\int_{\log f>M}\langle v\rangle^2 f\dd x\dd v$ in \eqref{omega-2}. There we choose $M$  proportional to $\sup_{t\in[0,T]}M_{2+\varepsilon'}(f_t)$. In other words, $M$ depends on $T$. 

In this remark, we follow \cite[Theorem 2]{Wu14} to show a global-in-time quantitative estimate in the case of $\kappa(x)=1$. 
    There are two steps of the proof: We first show the $L^p$-estimate for $p\in(1,2-1/k)$ ($k$ is defined as in Lemma \ref{lem:sobo}); Then we use bootstrap to show the $L^p$-estimate for all $p\in[2-1/k,+\infty)$.

\begin{itemize}
    \item Step $1$:  Small $ p\in(1, 2-1/k)$.

Let $0<\varepsilon<\min(1,k-1)$. Without loss of generality, in this step, we consider $ p\in(1, 2-1/(k-\varepsilon))$. Notice that 
\begin{equation}
    \label{ineq:p}
    1<p<2-\frac{1}{k-\varepsilon}\le k-\varepsilon.
\end{equation}
We recall \eqref{A-B}
\begin{equation*}
    \begin{aligned}
       & p^{-1}\frac{d}{dt}\int_{\R^d}F^p\dd v=-(p-1)\int_{\R^d}F^{p-2}(\nabla_v F)^T\bar a \nabla_v F\dd v\\
       &+p^{-1}\int_{\R^d}F^{p} \bar c\dd x\dd v=:-A+B.
    \end{aligned}
    \end{equation*}
We repeat the proof in Lemma \ref{lem:Lp-2}-$(2)$,
We recall the bound of \eqref{omega-1:2}
\begin{align*}
&\int_{\Omega_1} |v-v_*|^{-2}F^p F_*\dd v\dd v_*+\int_{\Omega_2} |v-v_*|^{-2}F^p F_*\dd v\dd v_*\\
\lesssim &{}e^M\|F\|_{L^p_v}^p+\|\nabla_v(\langle v\rangle^{-1}F^{\frac{p}{2}}) \|_{L^2(\Dot)}^2\int_{\{F>e^M\}}\langle v\rangle^2 F \dd v.
\end{align*}
We show the smallness of $\int_{\{F>e^M\}}\langle v\rangle^2 F\dd v$. By Hölder inequality, we have 
\begin{align*}
    \int_{\{ F> e^M\}}\langle v\rangle^2F\dd v&\le \int_{\R^d}(\langle v\rangle^{2(1+\varepsilon')}F)^{\frac{1}{1+\varepsilon'}}(F\log F)^{\frac{\varepsilon'}{1+\varepsilon'}}M^{-{\frac{\varepsilon'}{1+\varepsilon'}}}\dd v\\
    &\le M_{2(1+\varepsilon')}(F)^{\frac{1}{1+\varepsilon'}}\cH_{homo}(F)^{\frac{\varepsilon'}{1+\varepsilon'}}M^{-{\frac{\varepsilon'}{1+\varepsilon'}}}
\end{align*}
for some $\varepsilon'>0$. As in \eqref{fuz-homo}, we have $\cH_{homo}(F)\le \cH(f)\le  H_0$. By using of Lemma \ref{lem:M-1}-(2), we have $M_{2(1+\varepsilon')}(F)^{\frac{1}{\varepsilon'}}\lesssim \langle v\rangle^{\frac{2\varepsilon'}{3}}$. For some constant $\delta>0$, we choose $M(t)\sim  M_{2(1+\varepsilon')}(F_t)\delta^{-\frac{1+\varepsilon'}{\varepsilon'}}$ such that 
$M_{2(1+\varepsilon')}(F)^{\frac{1}{1+\varepsilon'}}\tilde H_0 M^{-{\frac{\varepsilon'}{1+\varepsilon'}}}\sim \delta$. We chose $\delta$ small enough to derive 
\begin{equation}
\label{-2:qua-1}
    \frac{d}{dt}\|F\|_{L^p_v}^p+C\|\nabla_v(\langle v\rangle^{-1}F^{\frac{p}{2}})\|_{L^2_v}^2\lesssim  e^{M(t)}\|F\|_{L^p_v}^p.
\end{equation}

We left to show the upper bound of  $e^{M(t)}\|F\|_{L^p_v}^p$.
Let $k=\frac{d}{d-2}$ for $d\ge 3$ and $k\in (1,+\infty)$ for $d=2$ as in Lemma \ref{lem:sobo}.
We write
\begin{align*}
    F^p=(\langle v\rangle^{-1}F^{\frac{p}{2}})^{\frac{2k}{q_1}}F^{\frac{k-\varepsilon}{q_2}}(\langle v\rangle^{r}F)^{\frac{1}{q_3}},
\end{align*}
where $q_1,\,q_2,\,q_3\in(1,\+\infty)$ and $r\in(0,\infty)$, and satisfy
\begin{align*}
   & \frac{kp}{q_1}+ \frac{k-\varepsilon}{q_2}+ \frac{1}{q_3}=p,\quad  \frac{1}{q_1}+ \frac{1}{q_2}+ \frac{1}{q_3}=1,\quad -\frac{2k}{q_1}+ \frac{r}{q_3}=0.
\end{align*}
Hölder's inequality with exponents $(q_1,\,q_2,\,q_3)$ implies that
\begin{align*}
   e^{M(t)}\|F\|_{L^p_v}^p\le\|\langle v\rangle^{-1}F^{\frac{p}{2}}\|_{L^{2k}_v}^{\frac{2k}{q_1}}\|F\|_{L^{k-\varepsilon}_v}^{\frac{k-\varepsilon}{q_2}}M_r(f)^{\frac{1}{q_3}}e^{M(t)}.
\end{align*}
By Young's inequality with exponents $(a,b,c,d)$, we have 
\begin{align*}
   e^{M(t)}\|F\|_{L^p_v}^p\lesssim\varepsilon\|\langle v\rangle^{-1}F^{\frac{p}{2}}\|_{L^{2k}_v}^{\frac{2ka}{q_1}}+\|F\|_{L^{k-\varepsilon}_v}^{\frac{b(k-\varepsilon)}{q_2}}+M_r(f)^{\frac{c}{q_3}}+e^{dM(t)}
\end{align*}
for some small constant $\varepsilon>0$. 

Let $\alpha\in(0,1)$. We choose $\frac{1}{q_1}=\alpha \frac{p-1}{kp-1}$ and $\frac{1}{q_2}=(1-\alpha) \frac{p-1}{k-\varepsilon-1}$, and $\frac{1}{a}=\frac{k}{q_1}$ and $\frac{1}{b}=\frac{k-\varepsilon}{q_2}$. By using of \eqref{ineq:p} and choosing $0<\alpha<\frac{k-1-\varepsilon}{k-\varepsilon}$, the exponents are all strictly larger than $1$, and we have $r>\frac{2(k-1-\varepsilon)}{k-p-\varepsilon}$ (notice that one can choose $r=2$) and $\frac{c}{q_3}>p$.

By using Sobolev embedding in Lemma \eqref{lem:sobo}, we have
\begin{align*}
   e^{M(t)}\|F\|_{L^p_v}^p&\lesssim\varepsilon\|\langle v\rangle^{-1}F^{\frac{p}{2}}\|_{L^{2k}_v}^{2}+\|F\|_{L^{k-\varepsilon}_v}+M_r(f)^{\frac{c}{q_3}}+e^{dM(t)}\\
   &\lesssim\varepsilon\|\nabla_v(\langle v\rangle^{-1}F^{\frac{p}{2}})\|_{L^2_v}^{2}+\|F\|_{L^{k-\varepsilon}_v}+M_r(f)^{\frac{c}{q_3}}+e^{dM(t)}.
\end{align*}
Hence, we have 
\begin{align*}
    \frac{d}{dt}\|F\|_{L^p_v}^p+C\|\nabla_v(\langle v\rangle^{-1}f^{\frac{p}{2}})\|_{L^2_v}^2\lesssim \|F\|_{L^{k-\varepsilon}_v}+M_r(f)^{\frac{c}{q_3}}+e^{dM(t)}.
\end{align*}
The propagation of $M_r(f_0)$ has been shown in Lemma \ref{lem:M-1} that $M(t)\sim M_{2(1+\varepsilon')}(F)^{\frac{1}{\varepsilon'}}\lesssim \langle t\rangle^{\frac{2}{3\varepsilon'}}$.

 Grönwall's inequality implies that
\begin{align*}
    \|f_t\|_{L^p(\Do)}\lesssim \exp(C\langle t\rangle^{\frac{2}{3}}).
\end{align*}

\item Step $2$: $p\in[k,+\infty)$.
\end{itemize}

We us bootstrap to show that for $  \|f_t\|_{L^p(\Do)}\lesssim \exp(C\langle t\rangle^{\frac{2}{3}})$ and $f_0\in L^{p+\delta}(\Do)$ for some $\delta\in(0,\frac{k-1}{kp})$, we have 
\begin{align*}
    \|f_t\|_{L^{p+\delta}(\Do)}\lesssim \exp(C'\langle t\rangle^{\frac{2}{3}}).
\end{align*}

We recall \eqref{-2:qua-1} that
\begin{equation*}
\label{-2:qua-2}
    \frac{d}{dt}\|f\|_{L^{p+\delta}(\Do)}^p+C_0\|\nabla_v(\langle v\rangle^{-1}f^{\frac{p+\delta}{2}})\|_{L^2(\Do)}^2\lesssim  e^{M(t)}\|f\|_{L^{p+\delta}(\Do)}^p,
\end{equation*}
where $M(t)\lesssim \langle t\rangle^{\frac{2}{3}}$.

We only left to show the upper bound of  $e^{M(t)}\|f\|_{L^{p+\delta}(\Do)}^p$.
We note that
\begin{align*}
    f^{p+\delta}=(\langle v\rangle^{-1}f^{\frac{p+\delta}{2}})^{\frac{2k}{q_1}}f^{\frac{p}{q_2}}(\langle v\rangle^{r}f)^{\frac{1}{q_3}},
\end{align*}
where we choose $\frac{1}{q_1}=\frac{(1-\alpha)(p-1)+\delta}{k(p+\delta)-1}$, $\frac{1}{q_2}=\alpha$ and $\frac{1}{q_3}=\frac{(1-\alpha)(p(k-1)+k\delta)-\delta}{k(p+\delta)-1}=\frac{k-1+(\frac{k}{p}-1)\delta}{k(p+\delta)-1}$, $q_1,\,q_2,\,q_3\in(1,\+\infty)$ for $\alpha=\frac{p-1}{p}$ such that 
\begin{align*}
   & \frac{k(p+\delta)}{q_1}+ \frac{p}{q_2}+ \frac{1}{q_3}=p,\quad  \frac{1}{q_1}+ \frac{1}{q_2}+ \frac{1}{q_3}=1\quad\text{and}\quad -\frac{2k}{q_1}+ \frac{r}{q_3}=0.
\end{align*}
Here $r\ge\frac{2(kp-1)}{p(k-1)}$.

We repeat Hölder and Young's ($a=\frac{q_1}{k}$ and $b=pq_2$) inequalities and the Sobolev embedding as in step 1 to derive 
\begin{align*}
   e^{M(t)}\|f\|_{L^{p+\delta}_v}^{p+\delta}&\lesssim\varepsilon\|\nabla_v(\langle v\rangle^{-1}f^{\frac{p+\delta}{2}})\|_{L^2_v}^{2}+\|f\|_{L^{p}_v}^p+T_r(f)^{\frac{c}{q_3}}+e^{dM(t)}.
\end{align*}
The restrictions on $q_i$ and $a,b,c,d$ implies that $r\ge\frac{2k(p-1)}{p(k-1)}$ and $c_0\ge \frac{p^2}{(p-1)^2}$.

By integrating over $x\in\T^d$, we have 
\begin{align*}
    \frac{d}{dt}\|f\|_{L^{p+\delta}(\Do)}^{p+\delta}+C_0\|\nabla_v(\langle v\rangle^{-1}f^{\frac{p}{2}})\|_{L^2(\Do)}^2\lesssim \|f\|_{L^p(\Do)}^p+\|T_r(f)\|_{L^{\frac{c}{q_3}}_x}+e^{dM(t)}.
\end{align*}
Notice that $\frac{1}{c}+\frac{1}{d}=\frac{1}{c_0}$. Let $\frac{1}{c}=\frac{1}{c_0}(1-\varepsilon)$ and $\frac{1}{d}=\frac{1}{c_0}\varepsilon$. We take $\varepsilon=\frac12$.
 Grönwall's inequality implies that
\begin{align*}
    \|f_t\|_{L^p(\Do)}\lesssim \exp(C'\langle t\rangle^{\frac{2}{3\varepsilon}}).
\end{align*}

Here $\frac{c}{q_3}\ge (1-\varepsilon)^{-1}\frac{kp^2(k-1)+(k-p)(k-1)}{k(p-1)^2(kp-1)}$.

\end{remark}

\printbibliography

\Addresses

\end{document}